\documentclass[11pt]{article}

\usepackage[margin=1.05in]{geometry}
\usepackage{amsmath,amssymb,amsthm,mathtools}
\usepackage{enumitem}
\usepackage[hidelinks]{hyperref}

\allowdisplaybreaks
\numberwithin{equation}{section}

\title{Second-Order Cover Time of Random Regular Graphs}
\author{%
  Lisa Hartung \qquad Andreas Klippel\\[0.5em]
  {\small Institut f\"ur Mathematik}\\
  {\small Johannes Gutenberg-Universit\"at Mainz}\\[0.3em]
  {\small \href{mailto:lhartung@uni-mainz.de}{\texttt{lhartung@uni-mainz.de}}\qquad
  \href{mailto:anklippe@uni-mainz.de}{\texttt{anklippe@uni-mainz.de}}}%
}
\date{\today}

\newtheorem{theorem}{Theorem}[section]
\newtheorem{proposition}[theorem]{Proposition}
\newtheorem{lemma}[theorem]{Lemma}
\newtheorem{corollary}[theorem]{Corollary}
\newtheorem{assumption}[theorem]{Assumption}
\theoremstyle{definition}

\theoremstyle{remark}
\newtheorem{remark}[theorem]{Remark}

\newcommand{\bbP}{\mathbb P}
\newcommand{\bbE}{\mathbb E}
\newcommand{\one}{\mathbf 1}
\newcommand{\Td}{\mathbb T_d}
\renewcommand{\Cap}{\operatorname{Cap}}
\newcommand{\capGT}{\operatorname{Cap}_{G,T}}

\newcommand{\dist}{\operatorname{dist}}
\newcommand{\TV}{\operatorname{TV}}
\newcommand{\diag}{\operatorname{diag}}
\newcommand{\op}{\operatorname{op}}

\begin{document}
\maketitle

\begin{abstract}
Let $G_N$ be a uniform random $d$-regular graph on $N$ vertices, with $d\ge3$
fixed.  Cooper and Frieze showed that the cover time of simple random walk on
$G_N$ is asymptotically $\theta_dN\log N$, where $\theta_d=(d-1)/(d-2)$.  We
prove that, in probability over the graph and uniformly in the
starting point, $(T_{\rm cov}-\theta_dN\log N)/(\theta_dN)$ converges in
distribution to a standard Gumbel law, confirming a prediction of Tishby,
Biham, and Katzav \cite{TishbyBihamKatzav2022}.  The proof reduces late points to avoidance probabilities
of finite sets via factorial moments, estimates these by Green-matrix
capacities, and compares the latter with capacities on the infinite
$d$-regular tree.  We also obtain Gumbel limits for lazy,
continuous-time and finitely many independent walks.  For the unvisited set,
we prove weighted laws of large numbers, spatial Poisson limits in the
cover-time window, and joint Poisson limits for fixed tree-component shapes
in their disappearance windows.
\end{abstract}

\section{Introduction}
\label{sec:introduction}

The cover time of a finite connected graph $G=(V,E)$ is the first time at
which simple random walk on $G$ has visited every vertex; see the monographs
\cite{AldousFill,LevinPeres2017} for background.  Its expectation lies between
$(1-o(1))n\log n$ and $(\frac4{27}+o(1))n^3$ for connected $n$-vertex
graphs~\cite{Feige1995lower,Feige1995upper}, it is concentrated whenever it
is much larger than the maximal expected hitting
time~\cite{Aldous1991threshold}, and it is determined up to constants by the
maximum of the associated Gaussian free field~\cite{DingLeePeres2012}.

Much less is known beyond first order.  For the complete graph the cover time
is a coupon-collector time with Gumbel fluctuations~\cite{ErdosRenyi1961}, and
Gumbel fluctuations were also proved for the discrete torus in dimension
$d\ge3$~\cite{Belius2013} and for bounded-degree vertex-transitive graphs in
a suitable regime~\cite{BerestyckiHermonTeyssier2022}.  In log-correlated settings the behaviour is
different: in two dimensions the first- and second-order terms are
known~\cite{DPRZ2004,BeliusKistler2017,Abe2021}, together with tightness on
the sphere~\cite{BeliusRosenZeitouni2020}, and for the binary tree the limit
law is a randomly shifted Gumbel
distribution~\cite{CortinesLouidorSaglietti2021,DemboRosenZeitouni2021}.

For random graphs, the cover time was studied in a long series of works by
Cooper and Frieze and their collaborators.  Building on earlier results of
Jonasson~\cite{Jonasson1998}, they determined the first-order asymptotics of
the cover time, with high probability over the graph, for random regular
graphs~\cite{CooperFrieze2005}, for sparse Erd\H{o}s--R\'enyi graphs above the
connectivity threshold~\cite{CooperFrieze2007sparse}, for the giant component
of a supercritical random graph~\cite{CooperFrieze2008giant}, for the
preferential attachment graph~\cite{CooperFrieze2007PA}, for random geometric
graphs~\cite{CooperFrieze2011geometric}, for random digraphs
\cite{CooperFrieze2012digraphs}, and for random graphs with a given degree
sequence~\cite{AbdullahCooperFrieze2012,CooperFriezeLubetzky2014}.  In
particular, for a uniform random $d$-regular graph $G_N$ on $N$ vertices with
$d\ge3$ fixed, Cooper and Frieze~\cite{CooperFrieze2005} proved that, with high
probability,
\begin{equation}
   \max_{x\in V_N}\bbE_x^{G_N}[T_{\rm cov}]=(1+o(1))\,\theta_d\,N\log N,
   \qquad
   \theta_d:=\frac{d-1}{d-2}.
   \label{eq:theta-d}
\end{equation}
The constant $\theta_d$ is the expected number of visits of simple random walk
on the infinite $d$-regular tree $\Td$ to its starting point, reflecting the
fact that $G_N$ is locally tree-like.  A central tool in these works is the
\emph{first visit time lemma}, which expresses the probability that a
stationary walk has not yet visited a given vertex in terms of a
return-probability generating function; a probabilistic proof was given by
Manzo, Quattropani, and Scoppola~\cite{ManzoQuattropaniScoppola2021}.  The
same circle of ideas was used for several independent walks on random regular
graphs by Cooper, Frieze, and Radzik~\cite{CooperFriezeRadzik2010} (see also
Alon, Avin, Kouck\'y, Kozma, Lotker, and Tuttle~\cite{AlonEtAl2011} for
multiple walks on general graphs), and for the structure of the set of
unvisited vertices (the \emph{vacant set}) by Cooper and
Frieze~\cite{CooperFrieze2013,CooperFrieze2016}; see also \v{C}ern\'y,
Teixeira, and Windisch~\cite{CernyTeixeiraWindisch2011} and \v{C}ern\'y and
Teixeira~\cite{CernyTeixeira2013} for the phase transition of the vacant set
on random regular graphs.  On the physics side, Tishby, Biham, and
Katzav~\cite{TishbyBihamKatzav2022} derived an analytical approximation for
the full distribution of the cover time of random regular graphs, predicting
Gumbel fluctuations on the scale $N$.

The purpose of this article is to go beyond the first-order asymptotics
\eqref{eq:theta-d}: we identify the second-order behaviour and the limiting
fluctuations of the cover time of random regular graphs.

For a finite graph $G=(V,E)$ and $x\in V$, we write $\bbP_x^G$ and $\bbE_x^G$
for the law and expectation of discrete-time simple random walk
$(X_t)_{t\ge0}$ on $G$ with $X_0=x$, and $\bbP_{\pi_G}^G$ when $X_0$ is
distributed according to the stationary distribution $\pi_G$.  For $v\in V$
and $A\subseteq V$ let
\begin{equation}
   H_v:=\inf\{t\ge0:X_t=v\},
   \qquad
   H_A:=\inf\{t\ge0:X_t\in A\},
\end{equation}
be the hitting times of $v$ and of $A$, and let
\begin{equation}
   T_{\rm cov}:=\max_{v\in V}H_v
\end{equation}
be the cover time.  Throughout, $G_N$ denotes a uniform simple $d$-regular
graph on $N$ labelled vertices, and $\bbP$ denotes its law.  A statement
about the walk on $G_N$ is said to hold \emph{in probability over $G_N$} if
the corresponding (random) quantity, which is a function of $G_N$, converges
in $\bbP$-probability.  We denote by $\Lambda$ a standard Gumbel random
variable, $\bbP(\Lambda\le s)=e^{-e^{-s}}$ for $s\in\mathbb R$.

For a uniform random $d$-regular graph, $d\ge3$ fixed, we prove the following
result.  It sharpens the first-order result~\eqref{eq:theta-d} of Cooper and
Frieze~\cite{CooperFrieze2005} and confirms the prediction of Tishby, Biham,
and Katzav~\cite{TishbyBihamKatzav2022}.

\begin{theorem}[Second-order cover time of random regular graphs]
\label{thm:rrg-cover-time}
Let $d\ge3$ be fixed, take $N$ along integers for which $dN$ is even, and
let $G_N$ be a uniform simple $d$-regular graph on $N$ labelled vertices.
Conditional on $G_N=G$, let $(X_t)$ be discrete-time simple random walk on
$G$, and let $\theta_d=(d-1)/(d-2)$.  Then, for every $s\in\mathbb R$,
\begin{equation}
   \sup_{x\in V_N}\left|
   \bbP_x^{G_N}\bigl(T_{\rm cov}\le\theta_dN(\log N+s)\bigr)
   -e^{-e^{-s}}\right|\longrightarrow0
   \qquad\text{in probability over }G_N.
\end{equation}
Equivalently, in probability over $G_N$ and uniformly over all starting
points $x\in V_N$, under $\bbP_x^{G_N}$,
\begin{equation}
   \frac{T_{\rm cov}-\theta_dN\log N}{\theta_dN}
   \Rightarrow \Lambda.
\end{equation}
\end{theorem}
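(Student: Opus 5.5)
We prove the statement through the number of unvisited vertices. Fix $s\in\mathbb R$, let $t_s:=\lfloor\theta_dN(\log N+s)\rfloor$, and set
\[
   U_s:=\#\{v\in V_N: H_v>t_s\},
\]
so that $\{T_{\rm cov}\le t_s\}=\{U_s=0\}$. The goal is to show that, in probability over $G_N$ and uniformly in the starting point $x$, $U_s$ converges in law to a Poisson variable with mean $e^{-s}$. We do this by the method of factorial moments: for each fixed $k$ we show
\[
   \bbE_x^{G_N}\bigl[(U_s)_k\bigr]=\sum_{v_1,\dots,v_k\ \text{distinct}}\bbP_x^{G_N}\bigl(H_{\{v_1,\dots,v_k\}}>t_s\bigr)\longrightarrow e^{-ks}.
\]

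\textbf{Step 1 (starting point and mixing).} Whp $G_N$ is an expander, so the mixing time is $O(\log N)$. We run the walk for $T_0=C\log N$ steps. During this burn-in it covers only $O(\log N)$ vertices. Only $o(N)$ tuples in the sum meet this burn-in ball, and their total contribution is negligible, because each such tuple is dominated by the $(k-1)$-tuple sums, which are bounded. After the burn-in the walk is within $N^{-10}$ of $\pi_G$ in total variation. The problem therefore reduces to the stationary start at time $t_s-T_0$, and this time shift is $o(N)$, so it does not change the limit.

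\textbf{Step 2 (avoidance of a finite set via capacity).} For a fixed set $A$ of size $k$, we show that under $\bbP_{\pi}$
\[
   \bbP\bigl(H_A>t\bigr)=\exp\bigl(-(1+o(1))\,t\,\Cap_G(A)/N\bigr)
\]
uniformly for $t\le CN\log N$. Here $\Cap_G(A)$ is the escape capacity at a mesoscopic time scale $T=(\log N)^2$, that is, $\sum_{a\in A}\bbP_a(\text{no return to }A\text{ before }T)$. The argument is a first-visit-time / quasi-stationary argument: split time into blocks of length much larger than the mixing time but $o(N)$, and use the Green-matrix identity for expected visits to $A$ within a block. Since $t/N\sim\log N$, the error in the exponent must be $o(1/\log N)$ relative to the main term. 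This is where the precision is needed.

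\textbf{Step 3 (tree comparison and summation).} Whp every ball of radius $c\log_{d-1}N$ contains at most one cycle, and all but $o(N)$ vertices have tree-like neighbourhoods. For a single vertex $v$ with a tree-like neighbourhood, the escape probability on $G$ equals its value on $\Td$, namely $1/\theta_d$, up to an error of $O(N^{-c})$. Then
\[
   N\exp\bigl(-t_s/(\theta_dN)\bigr)=e^{-s}.
\]
For a $k$-tuple of vertices that are pairwise far apart, the capacity is additive up to $O(N^{-c})$, which gives the factor $e^{-ks}$. It remains to show that the following contributions vanish:
(a) tuples containing a vertex near a short cycle. There are only $O((\log N)^{C})$ such vertices, and each has capacity at least $1/\theta_d-O(1)$ times a factor bounded below, so the contribution is $N^{-\delta}$ provided the capacity on a unicyclic neighbourhood still exceeds $(1-\epsilon)/\theta_d$;
(b) clustered tuples, where some pair lies within distance $c\log N$. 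For these we need $\Cap(A)\ge(1+\eta)/\theta_d$ whenever $|A|\ge2$. On $\Td$ this holds because the capacity of two distinct points strictly exceeds that of one point. Summing over $O(N(d-1)^r)$ pairs at distance $r$, the gain $N^{-\eta}$ must beat the entropy factor, which requires a quantitative bound: the capacity of $\{u,v\}$ at distance $r$ is $2/\theta_d-O((d-1)^{-r})$, and at small $r$ it is at least $(1+\eta_0)/\theta_d$.

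\textbf{Main obstacle.} The hardest part is Step 2 with the required accuracy, uniformly over all relevant sets, together with the clustered-set bounds in Step 3(b) near cycles. For Step 2, my first attempt would be a Cooper--Frieze-type first visit time lemma, with an explicit error bound $O(\log N\cdot\text{mix}/N)$, combined with spectral-gap control of the remainder of the return generating function.

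Once these estimates hold, the factorial moments converge, so $\bbP_x(U_s=0)\to e^{-e^{-s}}$ uniformly in $x$ in probability over $G_N$. Because the limit is continuous in $s$, the Gumbel convergence follows.
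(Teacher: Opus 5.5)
Your overall architecture is the same as the paper's. It uses factorial moments of the unvisited set and an avoidance estimate $\bbP_\pi(H_A>t)=\exp\{-t\,\mathrm{cap}(A)/N+o(1)\}$, with the capacity needed to precision $o(1/\log N)$. It compares with $\Td$ locally, uses additivity over far-apart clusters and the two-point gap $\frac{2(d-1)}{d}\theta_d^{-1}$ for close clusters, and ends with an $O(\log N)$ burn-in.

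The implementation differs in two places.

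\emph{Step~2.} The paper does not use blocks. The spectral expansion of the killed walk gives $\bbP_\pi(H_A>t)=\lambda_A^t(1+O(1/N))$ exactly. The rate $N(1-\lambda_A)$ is then identified with $\one^\top G_T^G(A)^{-1}\one$ through the exact entrance-measure identity \eqref{eq:entrance-convolution}, started from the quasi-stationary law. Your escape capacity at scale $(\log N)^2$ is also $N(1-\lambda_A)+O((\log N)^2/N)$, so it is a valid substitute. To see this, combine reversibility, which gives $\bbP_\pi(H_A=j)=\frac1N\sum_{a\in A}\bbP_a(H_A^+>j)$ with $H_A^+$ the return time, with the same spectral expansion. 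A blocking proof, however, must control hits during the mixing gaps, conditionally on survival, to relative precision $o(1/\log N)$ in the lower bound. The quasi-stationary route avoids this entirely.

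\emph{Step~3.} The paper clusters at scale $L_N=(\log N)^\alpha$ with $\alpha<1$, so every relevant ball has $N^{o(1)}$ vertices. Close clusters of any size then cost only $N^{o(1)}$ in entropy against a gain of $N^{-(d-2)/d}$. Clusters meeting the $N^{o(1)}$ bad vertices need only the crude bound $\capGT(A)\ge\delta_{d,K}>0$. So the near-cycle clusters you flag as the main obstacle are easy, and you need neither a unicyclic capacity bound nor a distance-by-distance summation. At your scale $c\log_{d-1}N$, by contrast, the non-tree-like set has polynomially many vertices, of order $N^{\Theta(c)}$, not $(\log N)^{O(1)}$. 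A $j$-point cluster also carries entropy $N^{(j-1)c\log(d-1)}$. You must therefore take $c$ small depending on $k$; with that choice your Step~3 closes.

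\emph{Step~1 does not close as written}, for two reasons. First, when you compare $k$-th factorial moments, the total-variation error multiplies a quantity as large as $N^k$. So $N^{-10}$ is insufficient for $k\ge10$, and you need $T_0=C_k\log N$. Second, bounding the tuples that meet the burn-in range $S$ by $|S|$ times the bounded $(k-1)$-tuple sums gives $O(\log N)$, not $o(1)$. You also need a factor $N^{-c}$ from the burn-in vertex itself being missed by the continuation. This factor must hold uniformly over the continuation's starting point, because that point is correlated with $S$.

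The paper avoids moments at this stage (Proposition~\ref{prop:deterministic-start-comparison}). It couples the walk from $x$, after $\ell$ steps, with a stationary walk and compares the cover events directly. The test function is then bounded by one, so total variation enters only once. The burn-in costs $(\ell+1)\max_{y,v}\bbP_y(H_v>\tau)\le(\ell+1)N^{-c}$, which needs only a singleton estimate (Lemma~\ref{lem:arbitrary-start-singletons}).
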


In particular, $T_{\rm cov}=\theta_dN\log N+O_{\bbP}(N)$, so that the
second-order term of the cover time is of order $N$ and has no deterministic
correction of intermediate order (such as $N\log\log N$).  This is in contrast
with the two-dimensional torus and the binary tree, where the cover time is
governed by log-correlated fields and the limit
law, where known, is not a pure Gumbel law~\cite{BeliusKistler2017,
CortinesLouidorSaglietti2021,DemboRosenZeitouni2021}.

Random regular graphs are typically not vertex-transitive, so the results
of~\cite{BerestyckiHermonTeyssier2022} do not apply.  Instead, our proof
exploits that, with high probability, $G_N$ is locally tree-like and has an
absolute spectral gap bounded away from zero, by Friedman's
theorem~\cite{Friedman2008} (see also Bordenave~\cite{Bordenave2019}).  In
particular, the random walk mixes in $O(\log N)$ steps (see Lubetzky and
Sly~\cite{LubetzkySly2010} for the cutoff phenomenon), and hitting times of
small sets from stationarity are approximately exponential, in the spirit of
Aldous and Brown~\cite{AldousBrown1992}; refined exponential
approximations are developed in~\cite{BerestyckiHermonTeyssier2026}.

\paragraph{Overview of the proof of Theorem \ref{thm:rrg-cover-time} and the remainder of the article.}
The proof of Theorem~\ref{thm:rrg-cover-time} makes the coupon-collector
heuristic rigorous.  If the events $\{H_v>t\}$ were independent with
$\bbP_\pi(H_v>t)\approx e^{-t/(\theta_dN)}$, the number of unvisited vertices
at time $\theta_dN(\log N+s)$ would be approximately Poisson with mean
$e^{-s}$, and the Gumbel law would follow. As the events $\{H_v>t\}$ are not independent, we instead control all falling factorial moments of the number of
unvisited vertices.  This requires the probability that the walk avoids an
arbitrary set of bounded size up to an error $o(1)$ in the exponent, and
hence the relevant capacities up to an error $o(1/\log N)$.  The proof has three ingredients, developed in
Sections~\ref{sec:fvtl}--\ref{sec:rrg-estimates} and combined in
Sections~\ref{sec:proof-main} and~\ref{sec:deterministic-start}.

Section~\ref{sec:fvtl} works on an arbitrary regular graph with an absolute
spectral gap.  Its main result, Theorem~\ref{thm:uniform-finite-target-fvtl},
states that a stationary walk avoids a set $A$ of bounded size up to time
$t=O(N\log N)$ with probability $\exp\{-t\capGT(A)/N+o(1)\}$, where
$\capGT(A)$ is the capacity of $A$ computed from a Green matrix truncated
at time $T=O(\log N)$.  For a singleton this is the first visit time lemma
of Cooper and Frieze; here we obtain an estimate that is uniform over all
targets of bounded size.

In Section~\ref{sec:tree-capacities}, we state and compute capacities on the infinite
$d$-regular tree $\Td$.  It is well known, that asingle vertex has capacity $\theta_d^{-1}$
(see also Lemma~\ref{lem:tree-basic-capacities}). Every set of at least two vertices has capacity at
least $\frac{2(d-1)}{d}\theta_d^{-1}$ (see Lemma~\ref{lem:tree-cluster-gap}).
Since this larger than $\theta_d^{-1}$, pairs of nearby vertices are much less
likely to be avoided together than a single vertex.  We refer to this mechanism as the
\emph{cluster gap}.

Section~\ref{sec:rrg-estimates} transfers these computations to a random regular graph $G_N$.
With high probability $G_N$ has an absolute spectral gap and is locally
tree-like, apart from $N^{o(1)}$ vertices near short cycles
(Propositions~\ref{prop:typical-rrg-event} and~\ref{prop:diagonal-event}).

In Section~\ref{sec:proof-main} we prove Theorem~\ref{thm:rrg-cover-time} when starting from the stationary distribution.  
In Section~\ref{sec:deterministic-start}, we allow for an arbitrary starting point as 
after $O(\log N)$ steps the walk is close to stationary.  We formulate
this step for general Markov chains
(Proposition~\ref{prop:deterministic-start-comparison}), so that it can be
reused in the later sections.

The remaining sections show that the method extends with little extra
effort.  Section~\ref{sec:lazy-continuous} treats the lazy walk with
holding probability $1-p$, whose cover time is $(\theta_d/p)N(\log
N+\Lambda)$, and the continuous-time walk
(Theorem~\ref{thm:lazy-continuous}).  Section~\ref{sec:parallel-walks}
treats $q$ independent walks, which cover $G_N$ in time
$(\theta_d/q)N(\log N+\Lambda)$ (Theorem~\ref{thm:parallel-walks}).
Section~\ref{sec:late-set-structure} studies the set of vertices that are
still unvisited at times $\theta_dN(\gamma\log N+s)$ with $0<\gamma\le1$.
This set is asymptotically uniformly distributed over the graph
(Theorem~\ref{thm:late-set-equidistribution}).  Its small components
appear in a hierarchy: unvisited copies of each fixed tree with $k$
vertices disappear at an explicit threshold $\gamma_{d,k}$, with Poisson
limits at the threshold (Theorem~\ref{thm:late-component-hierarchy}).  As a
consequence we determine the size of the largest component of the
unvisited set in this regime (Corollary~\ref{cor:late-largest-component}).
\paragraph{Statement on AI usage.} We used Opus 5.5 for proofreading and revising the first draft of
the paper. AI tools were not used to develop mathematical ideas or in the writing the first draft. 
All suggested changes were reviewed by the author, who takes full responsibility for the content of
the paper.
\paragraph{Acknowledgment.} The work of AK was partially supported by the Cusanuswerk. Part of this work was written while AK visited Marek Biskup and Johannes Bäumler at UCLA. He thanks them for their hospitality and for early discussions on the topic.  The work of LH was partly funded by the Deutsche Forschungsgemeinschaft (DFG, German Research
Foundation) through Project-ID 443891315 within SPP 2265.
\section{Uniform finite-target first-visit estimates}
\label{sec:fvtl}

In this section $G$ is a fixed graph satisfying
Assumption~\ref{ass:spectral-gap} below; nothing here is specific to random
graphs.  The main result, Theorem~\ref{thm:uniform-finite-target-fvtl},
expresses the probability that a stationary walk avoids a set of bounded
size up to time $O(N\log N)$ in terms of a truncated Green-matrix capacity.
Its proof has two steps: the avoidance probability is governed by the
killing rate of a quasi-stationary distribution
(Lemma~\ref{lem:quasi-stationary-exponential}), and this rate equals the
capacity divided by $N$ up to an error $O_{K,\rho}(\log N/N^2)$
(Lemma~\ref{lem:entrance-rate-green-capacity}).

\begin{assumption}[Absolute spectral gap]
\label{ass:spectral-gap}
$G=(V,E)$ is a connected $d$-regular graph with $|V|=N$, $P=P_G$ is the
transition matrix of simple random walk on $G$, and all eigenvalues of $P$
other than $1$ lie in $[-\rho,\rho]$ for some fixed $\rho<1$.
\end{assumption}

Under Assumption~\ref{ass:spectral-gap}, $P$ is symmetric and the
stationary distribution $\pi$ is uniform.

\begin{lemma}[Mixing]
\label{lem:mixing}
Under Assumption~\ref{ass:spectral-gap}, for all $x,y\in V$ and $t\ge0$,
\begin{equation}
   \Bigl|P^t(x,y)-\frac1N\Bigr|\le\rho^t.
   \label{eq:pointwise-mixing}
\end{equation}
Moreover, the total-variation mixing function
$\eta_N(m):=\max_{x\in V}\|P^m(x,\cdot)-\pi\|_{\TV}$ satisfies
\begin{equation}
   \eta_N(m)\le\tfrac12\sqrt N\rho^m,
   \label{eq:tv-mixing}
\end{equation}
and for every $f:V\to[0,1]$, $x\in V$ and $m\ge0$,
\begin{equation}
   \bigl|\bbE_x[f(X_m)]-\bbE_\pi[f]\bigr|\le\eta_N(m).
   \label{eq:tv-test-function}
\end{equation}
\end{lemma}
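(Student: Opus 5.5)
The plan is to use the spectral decomposition of the symmetric matrix $P$. Since $G$ is $d$-regular, $P$ is symmetric and has an orthonormal eigenbasis $\phi_1,\dots,\phi_N$ of $\mathbb R^V$ with eigenvalues $1=\lambda_1>\lambda_2\ge\dots\ge\lambda_N$, where $\phi_1\equiv N^{-1/2}$ (connectedness gives simplicity of the eigenvalue $1$). Then
\begin{equation*}
   P^t(x,y)-\frac1N=\sum_{j\ge2}\lambda_j^t\phi_j(x)\phi_j(y).
\end{equation*}
By Cauchy--Schwarz and $|\lambda_j|\le\rho$ for $j\ge2$, this is bounded in absolute value by
\begin{equation*}
   \rho^t\Bigl(\sum_{j\ge2}\phi_j(x)^2\Bigr)^{1/2}\Bigl(\sum_{j\ge2}\phi_j(y)^2\Bigr)^{1/2}.
\end{equation*}
Orthonormality gives $\sum_{j}\phi_j(x)^2=\|e_x\|^2=1$, so each factor is at most $1$. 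This yields \eqref{eq:pointwise-mixing}. The case $t=0$ is trivially fine since $|\one_{x=y}-1/N|\le1$.

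For \eqref{eq:tv-mixing}, write $\|P^m(x,\cdot)-\pi\|_{\TV}=\tfrac12\sum_y|P^m(x,y)-1/N|$ and apply Cauchy--Schwarz over $y$:
\begin{equation*}
   \le\tfrac12\sqrt N\Bigl(\sum_y\bigl(P^m(x,y)-\tfrac1N\bigr)^2\Bigr)^{1/2}.
\end{equation*}
The $\ell^2$ norm equals $\|(P^m-\Pi)e_x\|_2$, where $\Pi$ is the projection onto constants. Since $P^m-\Pi$ acts on $\one^\perp$ with operator norm at most $\rho^m$ and kills constants, this is at most $\rho^m\|e_x-\tfrac1N\one\|_2\le\rho^m$. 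Equivalently, by Parseval, $\sum_{j\ge2}\lambda_j^{2m}\phi_j(x)^2\le\rho^{2m}$. This gives $\eta_N(m)\le\tfrac12\sqrt N\rho^m$. A direct summation of the pointwise bound would only give $N\rho^m$, so the $\ell^2$ step is what yields the $\sqrt N$ factor; this is the only point requiring a little care.

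Finally, \eqref{eq:tv-test-function} is the standard characterization of total variation. For $f$ with values in $[0,1]$ we have $\bbE_x[f(X_m)]-\bbE_\pi[f]=\sum_y f(y)(P^m(x,y)-\pi(y))$. Because $\sum_y(P^m(x,y)-\pi(y))=0$, this quantity lies between $-\sum_y(\cdot)^-$ and $\sum_y(\cdot)^+$, and both of these equal $\|P^m(x,\cdot)-\pi\|_{\TV}\le\eta_N(m)$. No step is a genuine obstacle; the argument is a routine spectral computation.
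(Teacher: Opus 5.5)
Your proposal is correct and follows essentially the same route as the paper: spectral expansion with Cauchy--Schwarz for the pointwise bound, the $\ell^2$ (Parseval) estimate $\sum_y\bigl(P^m(x,y)-\tfrac1N\bigr)^2\le\rho^{2m}$ followed by Cauchy--Schwarz over $y$ for the $\tfrac12\sqrt N\rho^m$ bound, and the positive/negative-part characterization of total variation for the test-function inequality. Your projection formulation $\|(P^m-\Pi)e_x\|_2\le\rho^m\|e_x-\tfrac1N\one\|_2$ is just an equivalent rephrasing of the same Parseval step.
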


\begin{proof}
Let $\varphi_1=N^{-1/2}\one,\varphi_2,\dots,\varphi_N$ be an orthonormal
eigenbasis of $P$ with eigenvalues $1,\xi_2,\dots,\xi_N$, so that
$|\xi_i|\le\rho$ for $i\ge2$.  Then
$P^t(x,y)-1/N=\sum_{i\ge2}\xi_i^t\varphi_i(x)\varphi_i(y)$, and
\eqref{eq:pointwise-mixing} follows from the Cauchy--Schwarz inequality and
$\sum_i\varphi_i(x)^2=1$.  Similarly
$\sum_y(P^m(x,y)-1/N)^2=\sum_{i\ge2}\xi_i^{2m}\varphi_i(x)^2\le\rho^{2m}$,
and Cauchy--Schwarz gives
$\sum_y|P^m(x,y)-1/N|\le\sqrt N\rho^m$, which is \eqref{eq:tv-mixing}.
Finally, since $\sum_y(P^m(x,y)-\pi(y))=0$, the quantity
$\bbE_x[f(X_m)]-\bbE_\pi[f]=\sum_y(P^m(x,y)-\pi(y))f(y)$ lies between
$-\sum_y(P^m(x,y)-\pi(y))_-$ and $\sum_y(P^m(x,y)-\pi(y))_+$, and both
sums equal $\|P^m(x,\cdot)-\pi\|_{\TV}$.
\end{proof}

\paragraph{Truncated Green matrices and capacities.}
Throughout the paper the truncation level is
\begin{equation}
   T=T_N:=\Bigl\lceil\frac{\log N}{\log(1/\rho)}\Bigr\rceil,
   \qquad\text{so that}\qquad
   \rho^T\le\frac1N
   \quad\text{and}\quad T=O(\log N).
   \label{eq:truncation-level}
\end{equation}
For $x,y\in V$ and $A\subseteq V$ let
\begin{equation}
   g_T^G(x,y):=\sum_{j=0}^{T}P^j(x,y),
   \qquad
   G_T^G(A):=\bigl(g_T^G(a,b)\bigr)_{a,b\in A},
   \qquad
   \capGT(A):=\one_A^\top\big(G_T^G(A)\big)^{-1}\one_A.
   \label{eq:truncated-capacity}
\end{equation}
By Lemma~\ref{lem:uniform-green-invertibility} below, $G_T^G(A)$ is
invertible, so $\capGT(A)$ is well defined.

\begin{theorem}[Uniform finite-target first-visit theorem]
\label{thm:uniform-finite-target-fvtl}
Fix $K<\infty$ and $C_0<\infty$.  Under Assumption~\ref{ass:spectral-gap},
uniformly over $A\subseteq V$ with $1\le |A|\le K$ and integers
$0\le t\le C_0N\log N$,
\begin{equation}
   \bbP_\pi(H_A>t)
   =
   \exp\left\{-\frac{t}{N}\capGT(A)
   +O_{K,C_0,\rho}\!\left(\frac{(\log N)^2}{N}\right)\right\}.
\end{equation}
The error bound is deterministic and uniform in the graph, the target
$A$, and the indicated integer time $t$.  In particular, the error in the
exponent is $o(1)$.
\end{theorem}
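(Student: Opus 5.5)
The plan follows the two-step route announced at the start of the section. Let $Q$ be the substochastic matrix $P$ restricted to $V\setminus A$. By Perron--Frobenius it has a top eigenvalue $\lambda_A=1-\kappa_A$ with a positive eigenvector. The corresponding quasi-stationary distribution $\nu_A$ satisfies $\bbP_{\nu_A}(H_A>t)=\lambda_A^t$ exactly. The proof then has two parts:
\begin{itemize}
\item[(i)] a comparison of $\bbP_\pi(H_A>t)$ with $\lambda_A^{t}$ up to a multiplicative factor $1+O((\log N)^2/N)$ (the content of Lemma~\ref{lem:quasi-stationary-exponential});
\item[(ii)] the estimate $\kappa_A=\capGT(A)/N+O(\log N/N^2)$ (the content of Lemma~\ref{lem:entrance-rate-green-capacity}).
\end{itemize}
Given both, $t\kappa_A=t\capGT(A)/N+O(t\log N/N^2)=t\capGT(A)/N+O((\log N)^2/N)$ for $t\le C_0N\log N$. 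Since $-\log(1-\kappa_A)=\kappa_A+O(\kappa_A^2)$ and $\kappa_A=O(K/N)$, the correction $t\kappa_A^2$ is also $O(\log N/N)$, and the theorem follows.

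For (i), I would use the spectral decomposition of the symmetric matrix $Q$. Its second eigenvalue is at most $\rho'<1$ in absolute value, with $\rho'$ depending only on $\rho,K$. This follows by eigenvalue interlacing, since $Q$ is a principal submatrix of $P$ of codimension at most $K$, together with a Cauchy--Schwarz argument. Then
\[
\bbP_\pi(H_A>t)=\pi(V\setminus A)\,\langle \mathbf 1,\psi\rangle^2 N^{-1}\lambda_A^{t}+O(\rho'^{t})
\]
for the normalized Perron vector $\psi$, which handles $t\ge T'=C\log N$. To control $\psi$, iterate $\psi=\lambda_A^{-T}Q^T\psi$ and use Lemma~\ref{lem:mixing}: the walk started anywhere is $1/N$-close to uniform after $T$ steps, and the killing incurred in $T$ steps from stationarity is $O(KT/N)$. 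This gives $\psi=N^{-1/2}\mathbf 1+O(\log N/N)$ in a suitable norm, so the prefactor is $1+O(\log N/N)$. For $t\le T'$, both sides equal $1+O(\log N/N)$ directly. Finally, the additive error $\rho'^{t}$ must be absorbed multiplicatively. This is fine while $\lambda_A^t\ge N^{-C_0K}$, which holds for $t\le C_0N\log N$. Choosing the cutoff $t\ge C\log N$ with $C$ large in terms of $C_0,K,\rho$ makes $\rho'^t\le N^{-C_0K-1}$, a negligible relative error.

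For (ii), which is the main obstacle, I would use the renewal/Green identity. Take the eigenvector $\psi$ of $Q$, extend it by zero on $A$, and set $h=\psi$ on $V\setminus A$. Then $(I-P)h=\kappa_A h$ off $A$, and on $A$ we have $(I-P)h=-P h$. Write $-(Ph)|_A=:-\mu$, a positive measure on $A$, and sum the identity $(I-P)h=\kappa_A h-\mu\mathbf 1_A$-type relation against $\sum_{j\le T}P^j$. Using $P^{T+1}=\pi+O(\rho^T)$, this yields for $a\in A$
\[
0=h(a)=\sum_{b\in A}g_T(a,b)\mu_b\cdot(-1)+\kappa_A\sum_{j\le T}(P^jh)(a)+\bigl(h(a)-P^{T+1}h(a)\bigr).
\]
Since $h\approx N^{-1/2}$ except near $A$, the last two terms are $\approx (T+1)\kappa_A N^{-1/2}$ and $-N^{-1/2}$, up to $O(\log N/N^{3/2})$. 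Hence $G_T(A)\mu=N^{-1/2}\mathbf 1_A(1+O(\log N/N))$. Mass balance, namely summing $(I-P)h=\kappa_A h$ over all of $V$, gives $\kappa_A\langle\mathbf 1,h\rangle=\sum_a\mu_a$. Combining the two,
\[
\kappa_A=\frac{\mathbf 1^\top G_T(A)^{-1}\mathbf 1\,(1+O(\log N/N))}{N}.
\]
This requires a uniform bound on $\|G_T(A)^{-1}\|$, which Lemma~\ref{lem:uniform-green-invertibility} is meant to supply. The delicate point is to track the $O(\log N/N)$ relative errors so that they contribute only $O(\log N/N^2)$ to $\kappa_A$. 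I would check this carefully via the explicit $T\kappa_A=O(\log N/N)$ bookkeeping.
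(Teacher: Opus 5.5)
Your two-step architecture is the paper's. Your step (ii) is the dual form of its argument. Since $Q$ is symmetric, your $\mu=(Ph)|_A$ equals $S\,\zeta_A$, where $S:=\langle\one,h\rangle$ and $\zeta_A=\nu_AP_{B,A}$ is the entrance measure. Your mass balance is then $\zeta_A\one_A=r_A$, and your telescoped identity plays the role of the first-entrance convolution \eqref{eq:entrance-convolution}.

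\textbf{The gap.} It sits at the step you pass over quickly, the evaluation of $(P^{T+1}h)(a)$. The truncation \eqref{eq:truncation-level} only guarantees $\rho^T\le1/N$. So $P^{T+1}=\pi+O(\rho^T)$ means $|P^{T+1}(a,y)-1/N|\le\rho/N$, an error as large as $\pi(y)$ itself. Since $h\ge0$ and $\|h\|_1=S\approx\sqrt N$, you get $(P^{T+1}h)(a)=S/N+E(a)$ with only $|E(a)|\le\rho S/N$. That is a relative error of order one, not $O(\log N/N)$. After inverting $G_T^G(A)$ and dividing by $S$ via mass balance, this determines $\kappa_A$ ($=r_A$) only up to an error $O(1/N)$, the same order as $\kappa_A$ itself. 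The theorem needs an additive error $O(\log N/N^2)$.

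The same borderline precision undermines your flatness claim $\psi=N^{-1/2}\one+O(\log N/N)$, obtained by iterating $Q^T$. Entrywise closeness after $T$ steps controls $P^T\psi$ only up to relative error $O(1)$. Total-variation closeness $\eta_N(T)\le\frac12N^{-1/2}$ controls it only up to relative error $O(N^{-1/2})$. Your displayed identity also has a sign slip: at $a\in A$ it should read $(G_T^G(A)\mu)(a)=\kappa_A\sum_{j\le T}(P^jh)(a)+(P^{T+1}h)(a)$.

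\textbf{The repair.} You need a longer horizon, which is exactly why the paper evaluates $\bbP_{\nu_A}(X_{2T+1}=b)$. In your language:
\begin{itemize}
\item Telescope up to $2T$, so that $|P^{2T+1}(a,y)-1/N|\le N^{-2}$ and $|E(a)|\le S/N^2$.
\item Bound $\kappa_A\sum_{j\le 2T}(P^jh)(a)=O(TS/N^2)$ using $h\le\lambda_A^{-T}P^Th\le 3S/N$ pointwise. This is an upper bound, for which entrywise mixing does suffice.
\item Replace $G_{2T}^G(A)$ by $G_T^G(A)$. Their entries differ by $\sum_{j=T+1}^{2T}P^j(a,b)\le T/N+\rho^{T+1}/(1-\rho)=O(\log N/N)$. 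This moves $\kappa_A$ by only $O(\log N/N^2)$, because the proof of Lemma~\ref{lem:uniform-green-invertibility} gives the inverse bound at any truncation level.
\end{itemize}

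Alternatively, keep level $T$ and estimate $E$ in $\ell^2$: $|E(a)|\le\|P^{T+1}(a,\cdot)-\pi\|_2\,\|h-(S/N)\one\|_2\le\rho^{T+1}\cdot O(N^{-1/2})$. The $\ell^2$ flatness used here comes from the paper's Rayleigh-quotient argument. The bound $u_B^\top Qu_B\ge1-K/|B|$ together with interlacing gives $1-\langle u_B,\psi\rangle^2=O(1/N)$. That same bound also settles step (i) without your case split in $t$. The non-Perron part of $u_B^\top Q^tu_B$ is at most $O(1/N)\rho^t\le O(1/N)\lambda_A^t$, since $\rho<\lambda_A$.
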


\begin{remark}[Relation to the first visit time lemma]
\label{rem:fvtl-singleton}
For a singleton, $\capGT(\{v\})=1/g_T^G(v,v)$, where $g_T^G(v,v)$ is the
expected number of visits to $v$ during the first $T$ steps of the walk
started at $v$.  Theorem~\ref{thm:uniform-finite-target-fvtl} then reads
$\bbP_\pi(H_v>t)=\exp\{-t/(Ng_T^G(v,v))+o(1)\}$, which is the form of
Cooper and Frieze's first visit time lemma~\cite{CooperFrieze2005}; see
also~\cite{ManzoQuattropaniScoppola2021}.  The point of
Theorem~\ref{thm:uniform-finite-target-fvtl} is that it holds for all
targets of bounded size simultaneously, with the matrix
$G_T^G(A)$ encoding the interaction between the points of $A$.
\end{remark}

\begin{lemma}[Truncated Green matrices]
\label{lem:uniform-green-invertibility}
Under Assumption~\ref{ass:spectral-gap}, for every $A\subseteq V$ the
matrix $G_T^G(A)$ is positive definite with smallest eigenvalue at least
$c_\rho:=(1-\rho)/(1+\rho)$; in particular
$\|(G_T^G(A))^{-1}\|_{\op}\le c_\rho^{-1}$.  Moreover, for $N\ge T$ and all
$x,y\in V$,
\begin{equation}
   g_T^G(x,y)\le\one_{\{x=y\}}+\frac TN+\frac{\rho}{1-\rho}
   \le M_\rho:=2+\frac{\rho}{1-\rho}.
   \label{eq:green-entry-bound}
\end{equation}
\end{lemma}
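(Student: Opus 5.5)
Both claims follow from the spectral decomposition of $P$ used in the proof of Lemma~\ref{lem:mixing}. Let $\varphi_1=N^{-1/2}\one,\varphi_2,\dots,\varphi_N$ be an orthonormal eigenbasis of the symmetric matrix $P$, with eigenvalues $1,\xi_2,\dots,\xi_N$ and $|\xi_i|\le\rho$ for $i\ge2$. Then the full $V\times V$ truncated Green matrix $g_T^G=\sum_{j=0}^T P^j$ is symmetric with the same eigenvectors, and its eigenvalues are $T+1$ (on $\varphi_1$) and $s_T(\xi_i):=\sum_{j=0}^T\xi_i^j$ for $i\ge2$.

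\emph{Positive definiteness.} I would lower-bound $s_T(\xi)$ uniformly for $\xi\in[-\rho,\rho]$. For $\xi\ne1$, $s_T(\xi)=(1-\xi^{T+1})/(1-\xi)$.
\begin{itemize}
\item If $\xi\ge0$, all terms are nonnegative, so $s_T(\xi)\ge1$.
\item If $\xi<0$, the numerator satisfies $1-\xi^{T+1}\ge1-|\xi|^{T+1}\ge1-\rho$, and the denominator satisfies $1-\xi=1+|\xi|\le1+\rho$. Hence $s_T(\xi)\ge(1-\rho)/(1+\rho)=c_\rho$.
\end{itemize}
Since $c_\rho\le1\le T+1$, the full matrix $g_T^G$ has smallest eigenvalue at least $c_\rho$. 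For any $A\subseteq V$, $G_T^G(A)$ is a principal submatrix of $g_T^G$. For $u\in\mathbb R^A$ extended by zero to $V$, the Rayleigh quotient gives $u^\top G_T^G(A)u=u^\top g_T^G u\ge c_\rho|u|^2$. So $G_T^G(A)$ is positive definite with smallest eigenvalue at least $c_\rho$. Being symmetric, its inverse then has operator norm at most $c_\rho^{-1}$.

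\emph{Entry bound.} Write $g_T^G(x,y)=\one_{\{x=y\}}+\sum_{j=1}^T P^j(x,y)$ and use \eqref{eq:pointwise-mixing} term by term: $P^j(x,y)\le1/N+\rho^j$. Summing over $1\le j\le T$ gives at most $T/N+\sum_{j\ge1}\rho^j=T/N+\rho/(1-\rho)$, which is the first inequality in \eqref{eq:green-entry-bound}. For the second, $N\ge T$ gives $T/N\le1$, so the bound is at most $2+\rho/(1-\rho)=M_\rho$.

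There is no real obstacle here. The only point needing care is the alternating-sign case $\xi<0$: one must bound the geometric sum by its closed form rather than termwise, since the terms cancel. The constant $c_\rho$ is attained in the limit $\xi=-\rho$ with $T$ even.
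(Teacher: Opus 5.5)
Your proof is correct and follows the paper's own route: the same lower bound on the eigenvalues $\sum_{j=0}^T\xi^j$ (using the closed form when $\xi<0$), the same passage to principal submatrices (you use the Rayleigh quotient, the paper uses Cauchy interlacing; these are equivalent here), and termwise summation of \eqref{eq:pointwise-mixing} for the entry bound. The only slip is in your closing remark, and it plays no role in the proof: $c_\rho$ is not attained, since $s_T(\xi)\ge1-\rho$ for all $T$ and $\xi\in[-\rho,0)$, and $s_T(-\rho)\to1/(1+\rho)$ as $T\to\infty$.
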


\begin{proof}
The matrix $G_T^G(V)=\sum_{j=0}^TP^j$ is symmetric, and its eigenvalues
are $S_T(\xi):=\sum_{j=0}^T\xi^j$ for the eigenvalues $\xi$ of $P$.  We have
$S_T(1)=T+1$, $S_T(\xi)\ge1$ for $\xi\in[0,\rho]$, and, for
$\xi\in[-\rho,0)$, $S_T(\xi)=(1-\xi^{T+1})/(1-\xi)\ge(1-\rho)/(1+\rho)$.
Hence the smallest eigenvalue of $G_T^G(V)$ is at least $c_\rho$, and by
Cauchy interlacing the same holds for every principal submatrix
$G_T^G(A)$.  The entry bound \eqref{eq:green-entry-bound} follows by
summing \eqref{eq:pointwise-mixing} over $1\le j\le T$.
\end{proof}

\paragraph{Quasi-stationary distribution.}
Let $A\subseteq V$ and $B=A^c$.  Let $Q:=P|_{B\times B}$ be the
transition matrix of the walk killed upon entering $A$, and write
$\lambda_A=1-r_A$ for the largest eigenvalue of $Q$.

\begin{lemma}
\label{lem:quasi-stationary-exponential}
Fix $K<\infty$ and $C_0<\infty$.  Under Assumption~\ref{ass:spectral-gap},
uniformly over $A\subseteq V$ with $1\le |A|\le K$ and integers
$0\le t\le C_0N\log N$,
\begin{equation}
   \begin{aligned}
      0<r_A&=O_K(1/N),\\
      \bbP_\pi(H_A>t)
      &=\lambda_A^t\bigl(1+O_{K,\rho}(N^{-1})\bigr)\\
      &=\exp\left\{-tr_A+O_{K,C_0,\rho}\!\left(\frac{\log N}{N}\right)\right\}.
   \end{aligned}
\end{equation}
Moreover, for all sufficiently large $N$, $\lambda_A>\rho$ is a simple
eigenvalue of $Q$ and all other eigenvalues of $Q$ lie in $[-\rho,\rho]$.
\end{lemma}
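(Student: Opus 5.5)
We work with the symmetric substochastic matrix $Q=P|_{B\times B}$ and use interlacing together with a rank-$|A|$ perturbation argument.

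\emph{Step 1: spectral picture of $Q$.} Since $P$ is symmetric, $Q$ is a principal submatrix of $P$. Write $P=\frac1N\one\one^\top+R$, where $R$ has all eigenvalues in $[-\rho,\rho]$. Then $Q=\frac1N\one_B\one_B^\top+R|_{B\times B}$. By interlacing, $R|_{B\times B}$ also has spectrum in $[-\rho,\rho]$. Adding a rank-one positive semidefinite perturbation moves at most one eigenvalue above $\rho$, and the eigenvalues can only increase, so they stay $\ge-\rho$. Hence all eigenvalues of $Q$ except the top one lie in $[-\rho,\rho]$.

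For the top eigenvalue, take the Rayleigh quotient with $f=\one_B/\sqrt{|B|}$:
\[
\lambda_A\ge\frac{\langle \one_B,Q\one_B\rangle}{|B|}=1-\frac{|\partial A|}{N-|A|}\ge 1-\frac{dK}{N-K}.
\]
This gives $r_A=O_K(1/N)$ and $\lambda_A>\rho$ for $N$ large, which makes $\lambda_A$ simple. For $r_A>0$: $G$ is connected and $A\neq\emptyset$, so some row of $Q$ is strictly substochastic. Because $Q$ is irreducible on each component of $B$, Perron--Frobenius gives $\lambda_A<1$.

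\emph{Step 2: avoidance probability.} We have $\bbP_\pi(H_A>t)=\frac1N\one_B^\top Q^t\one_B$. Let $\psi$ be the normalized Perron eigenvector of $Q$ and split $u:=\one_B/\sqrt{N}$ as $u=\alpha\psi+w$ with $w\perp\psi$. Then
\[
\bbP_\pi(H_A>t)=\alpha^2\lambda_A^t+\langle w,Q^tw\rangle,
\qquad |\langle w,Q^tw\rangle|\le\rho^t\|w\|^2 .
\]

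The key quantitative input is $\|w\|^2=O_{K,\rho}(1/N)$, i.e.\ the Perron vector is nearly flat. To prove it, compute $\|(Q-I)u\|=O_K(N^{-1/2})$, since only the at most $dK$ boundary neighbours of $A$ have rows deficient, each by $O(1/\sqrt N)$. On $w$ the operator $Q-I$ is bounded below by $1-\rho$, and $(Q-I)u=-\alpha r_A\psi+(Q-I)w$. Therefore $(1-\rho)\|w\|\le\|(Q-I)u\|+r_A=O(N^{-1/2})$, giving $\|w\|^2=O(1/N)$ and $\alpha^2=\|u\|^2-\|w\|^2=1-O(1/N)$.

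\emph{Step 3: combining.} The main obstacle is to turn the additive remainder $\rho^t\|w\|^2$ into the multiplicative error $1+O(N^{-1})$ uniformly in $t$. Since $\lambda_A\ge1-O(1/N)$ and $t\le C_0N\log N$, we have $\lambda_A^t\ge N^{-C}$, whereas $\rho^t$ decays exponentially in $t$. Write
\[
\frac{\rho^t}{\lambda_A^t}\le\Bigl(\frac{\rho}{1-cK/N}\Bigr)^t\le 1
\]
for $N$ large. Then the remainder relative to $\lambda_A^t$ is at most $\|w\|^2=O(1/N)$, which yields
\[
\bbP_\pi(H_A>t)=\lambda_A^t\bigl(1+O(N^{-1})\bigr).
\]

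Finally, $\log\lambda_A=-r_A+O(r_A^2)=-r_A+O(N^{-2})$, so $t\log\lambda_A=-tr_A+O(\log N/N)$ for $t\le C_0N\log N$. This gives the exponential form with error $O_{K,C_0,\rho}(\log N/N)$.
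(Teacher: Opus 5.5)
Your proposal is correct and follows the paper's route: a flat-vector Rayleigh quotient, interlacing to confine all but the top eigenvalue of $Q$ to $[-\rho,\rho]$, splitting $\one_B$ along the Perron vector, and $(\rho/\lambda_A)^t\le1$. It differs only in two small details: you interlace via $P=\frac1N\one\one^\top+R$ rather than directly via $\xi_i\ge\mu_i\ge\xi_{i+m}$, and you bound $\|w\|$ through the residual $\|(Q-I)u\|$, whereas the paper reuses $1-O(1/N)\le\alpha^2+(1-\alpha^2)\rho$. For $r_A>0$, one small fix: Perron--Frobenius on each irreducible block needs every component of $B$ to have an edge into $A$, not just some row of $Q$ to be deficient, and connectivity of $G$ gives exactly this.
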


\begin{proof}
Let $m=|A|$ and $u_B:=\one_B/\sqrt{|B|}$.  Since at most $d|A|$ directed
edges enter $A$, $\sum_{x\in B}P(x,A)\le|A|\le K$, so by the Rayleigh
quotient
\begin{equation}
   \lambda_A\ge u_B^\top Qu_B
   =1-\frac1{|B|}\sum_{x\in B}P(x,A)
   \ge1-O_K(1/N).
\end{equation}
Since $Q$ is substochastic, $\lambda_A\le1$, and in fact $\lambda_A<1$:
every connected component of $B$ has an edge leading to $A$, so the walk
killed on that component loses mass with positive probability.  This proves
$0<r_A=O_K(1/N)$.

Let $1=\xi_1\ge\cdots\ge\xi_N$ be the eigenvalues of $P$ and
$\mu_1\ge\cdots\ge\mu_{N-m}$ those of $Q$.  Cauchy interlacing gives
$\xi_i\ge\mu_i\ge\xi_{i+m}$, so $\mu_2\le\xi_2\le\rho$ and
$\mu_{N-m}\ge\xi_N\ge-\rho$.  For large $N$, $\lambda_A=\mu_1>\rho$, which
proves the last assertion.

Let $\varphi_1,\dots,\varphi_{N-m}$ be an orthonormal eigenbasis of $Q$
with $Q\varphi_1=\lambda_A\varphi_1$, and expand
$u_B=\alpha\varphi_1+\sum_{i\ge2}\beta_i\varphi_i$, so that
$\alpha^2+\sum_{i\ge2}\beta_i^2=1$.  Then
\begin{equation}
   1-O_K(1/N)\le u_B^\top Qu_B
   =\alpha^2\lambda_A+\sum_{i\ge2}\beta_i^2\mu_i
   \le\alpha^2+(1-\alpha^2)\rho,
\end{equation}
so $1-\alpha^2=O_{K,\rho}(1/N)$.  Since
$\bbP_\pi(H_A>t)=N^{-1}\one_B^\top Q^t\one_B=(|B|/N)\,u_B^\top Q^tu_B$, the
spectral expansion gives
\begin{equation}
   \begin{aligned}
      \bbP_\pi(H_A>t)
      &=\lambda_A^t\bigl(1+O_{K,\rho}(1/N)\bigr)
        +O_{K,\rho}(\rho^t/N)\\
      &=\lambda_A^t\bigl(1+O_{K,\rho}(1/N)\bigr),
   \end{aligned}
\end{equation}
where the last step uses $\rho<\lambda_A$, so that
$(\rho/\lambda_A)^t\le1$.  Finally,
$\lambda_A^t=(1-r_A)^t=\exp\{-tr_A+O(tr_A^2)\}$ and
$tr_A^2=O_{K,C_0}(\log N/N)$.  Taking the logarithm of the relative
factor adds only $O_{K,\rho}(1/N)$, which proves the stated exponential
estimate.
\end{proof}

For $|A|\le K$ and $N$ large, the Perron--Frobenius theorem gives a
nonnegative eigenvector of $Q$ for its largest eigenvalue $\lambda_A$.
Since $Q$ is symmetric and this eigenvalue is simple by
Lemma~\ref{lem:quasi-stationary-exponential}, normalization gives a unique
probability row vector $\nu_A$ satisfying $\nu_AQ=\lambda_A\nu_A$.
Thus $\nu_A$ is the quasi-stationary distribution associated with the
leading eigenvalue.  We define the \emph{entrance measure} into $A$ by
\begin{equation}
   \zeta_A:=\nu_AP_{B,A},
   \qquad\text{so that}\qquad
   \zeta_A\one_A=\nu_A(\one_B-Q\one_B)=1-\lambda_A=r_A.
   \label{eq:entrance-measure}
\end{equation}

\begin{lemma}
\label{lem:entrance-rate-green-capacity}
Fix $K<\infty$.  Under Assumption~\ref{ass:spectral-gap}, uniformly over
$A\subseteq V$ with $1\le |A|\le K$,
\begin{equation}
   r_A=\frac1N\capGT(A)
   +O_{K,\rho}\!\left(\frac{\log N}{N^2}\right).
\end{equation}
\end{lemma}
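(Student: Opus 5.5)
The plan is to work directly with the quasi-stationary distribution $\nu_A$ and the entrance measure $\zeta_A$ from \eqref{eq:entrance-measure}. The key step is to derive an exact linear identity expressing $\zeta_A G_T^G(A)$ in terms of $\nu_A P^j$ restricted to $A$. We then show that the right-hand side is $\frac1N\one_A^\top$ up to an error $O(\log N/N^2)$, and finally invert the Green matrix using Lemma~\ref{lem:uniform-green-invertibility}.

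\emph{Step 1 (exact identity).} Extend $\nu_A$ to a row vector on $V$ by setting it to zero on $A$. The relation $\nu_AQ=\lambda_A\nu_A$ together with the definition $\zeta_A=\nu_AP_{B,A}$ gives the exact identity
\[
\nu_AP=\lambda_A\nu_A+\zeta_A ,
\]
where $\zeta_A$ is supported on $A$. Multiplying by $P^j$ gives $\nu_AP^{j+1}-\lambda_A\nu_AP^j=\zeta_AP^j$. Evaluate at $b\in A$ and sum over $0\le j\le T$. Since $\nu_A(b)=0$ and $1-\lambda_A=r_A$, the sum telescopes to
\[
(\zeta_AG_T^G(A))(b)=\sum_{a\in A}\zeta_A(a)g_T^G(a,b)=(\nu_AP^{T+1})(b)+r_A\sum_{j=1}^{T}(\nu_AP^j)(b).
\]

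\emph{Step 2 ($\nu_A$ is close to uniform in $\ell^2$).} In the proof of Lemma~\ref{lem:quasi-stationary-exponential} we showed $1-\alpha^2=O_{K,\rho}(1/N)$, where $\alpha=\langle u_B,\varphi_1\rangle$. Hence $\|\varphi_1-u_B\|_2=O(N^{-1/2})$ after fixing the sign so that $\varphi_1\ge0$. Since $\nu_A=\varphi_1/\langle\one_B,\varphi_1\rangle$ and $\langle\one_B,\varphi_1\rangle=\sqrt{|B|}\,(1+O(1/N))$, we get $\|\nu_A-\pi\|_2=O_{K,\rho}(1/N)$, where $\pi$ is uniform on $V$ and the mass $K/N$ on $A$ is also of this order. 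Because $(\nu_A-\pi)\one=0$, we have $(\nu_A-\pi)P^j=(\nu_A-\pi)(P^j-\one\pi)$, and the spectral gap gives
\[
|(\nu_AP^j)(b)-\tfrac1N|\le\rho^j\|\nu_A-\pi\|_2=O(\rho^j/N).
\]
With $j=T+1$ and $\rho^T\le 1/N$ from \eqref{eq:truncation-level}, the first term in Step 1 equals $\frac1N+O(N^{-2})$. Each $(\nu_AP^j)(b)$ is $O(1/N)$, so the second term is at most $r_A\cdot T\cdot O(1/N)=O_K(\log N/N^2)$, using $r_A=O_K(1/N)$ from Lemma~\ref{lem:quasi-stationary-exponential}.

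\emph{Step 3 (inversion).} Steps 1 and 2 give $\zeta_AG_T^G(A)=\frac1N\one_A^\top+E$ with $\|E\|_2=O_{K,\rho}(\sqrt K\log N/N^2)$. Lemma~\ref{lem:uniform-green-invertibility} gives $\|(G_T^G(A))^{-1}\|_{\op}\le c_\rho^{-1}$, so
\[
\zeta_A=\tfrac1N\one_A^\top(G_T^G(A))^{-1}+O(\log N/N^2).
\]
Pairing with $\one_A$ and using $\zeta_A\one_A=r_A$ gives the claim.

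I expect Step 2 to be the main obstacle. A crude $\ell^1$ or total-variation bound on $\nu_A-\pi$ only yields an $O(1/N)$ error, the same order as the main term. The argument needs the sharper $\ell^2$ estimate $\|\nu_A-\pi\|_2=O(1/N)$, obtained from the Rayleigh-quotient bound on $\alpha$, combined with the $\ell^2$ contraction of $P^j-\one\pi$, to reach the $N^{-2}$ scale. Some care is also needed with the normalization constant of $\varphi_1$ and with its sign, via Perron--Frobenius.
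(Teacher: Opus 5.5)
Your proof is correct, but it takes a different route from the paper. The paper iterates your one-step identity $\nu_AP=\lambda_A\nu_A+\zeta_A$ for $n=2T+1$ steps and evaluates at $b\in A$, which is exactly the first-entrance decomposition \eqref{eq:entrance-convolution}. Because $\rho^{2T+1}\le N^{-2}$, pointwise mixing alone pins $\bbP_{\nu_A}(X_{2T+1}=b)$ to $1/N+O(N^{-2})$, using nothing about $\nu_A$ except that it is a probability measure. The discrepancy with $(\zeta_AG_T^G(A))(b)$ comes from the weights $\lambda_A^{2T-u}=1+O(Tr_A)$ and the tail $T<u\le2T$. Both are controlled by the small mass $\zeta_A\one_A=r_A=O_K(1/N)$ and the bounded Green entries.

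You instead telescope over only $T+1$ steps. This gives an exact identity with no $\lambda_A$-weights and no tail. However, the remaining terms $(\nu_AP^{T+1})(b)$ and $r_A\sum_{j\le T}(\nu_AP^j)(b)$ carry the full-mass measure $\nu_A$, and for these pointwise mixing gives only $O(1/N)$ precision, as you note. So you need the extra input $\|\nu_A-\pi\|_2=O_{K,\rho}(1/N)$. You extract it correctly from the Rayleigh-quotient bound $1-\alpha^2=O_{K,\rho}(1/N)$ inside the proof of Lemma~\ref{lem:quasi-stationary-exponential}: with $\alpha\ge0$ by Perron--Frobenius, $\|\varphi_1-u_B\|_2^2=2-2\alpha$. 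Combined with the $\ell^2$ contraction of $P^j$ on $\one^\perp$, this closes the argument.

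As a by-product, your route gives a quantitative $\ell^2$ closeness of the quasi-stationary distribution to uniform. The cost is that it leans on the symmetric spectral structure and on an intermediate estimate of the previous proof rather than on its statement. The paper's route does not depend on how well $\nu_A$ is approximated and needs only uniform pointwise mixing.
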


\begin{proof}
Fix $b\in A$ and set $n:=2T+1$.  Starting from $\nu_A$, survival for $s-1$
steps has probability $\lambda_A^{s-1}$ and leaves the walk distributed as
$\nu_A$, so
$\bbP_{\nu_A}(H_A=s,X_s=a)=\lambda_A^{s-1}\zeta_A(a)$.  Decomposing
according to the first entrance into $A$ (which must occur by time $n$,
since $b\in A$) and substituting $u=n-s$,
\begin{equation}
   \bbP_{\nu_A}(X_n=b)
   =\sum_{u=0}^{2T}\lambda_A^{2T-u}\sum_{a\in A}\zeta_A(a)P^u(a,b).
   \label{eq:entrance-convolution}
\end{equation}
By \eqref{eq:pointwise-mixing} and \eqref{eq:truncation-level}, the
left-hand side equals $1/N+O(\rho^n)=1/N+O(N^{-2})$.

We compare the right-hand side with $(\zeta_AG_T^G(A))(b)$.  For
$0\le u\le T$, $\lambda_A^{2T-u}=1+O(Tr_A)=1+O_{K,\rho}(\log N/N)$; since
$g_T^G\le M_\rho$ by \eqref{eq:green-entry-bound} and
$\zeta_A\one_A=r_A=O_K(1/N)$, replacing these factors by $1$ costs
$O_{K,\rho}(\log N/N^2)$.  For $T<u\le2T$, \eqref{eq:pointwise-mixing} and
$\rho^T\le1/N$ give
\begin{equation}
   \sum_{u=T+1}^{2T}\sum_{a\in A}\zeta_A(a)P^u(a,b)
   \le r_A\sum_{u=T+1}^{2T}\Bigl(\frac1N+\rho^u\Bigr)
   \le O_K\Bigl(\frac1N\Bigr)\Bigl(\frac TN+\frac{1}{(1-\rho)N}\Bigr)
   =O_{K,\rho}\Bigl(\frac{\log N}{N^2}\Bigr).
\end{equation}
Combining these estimates gives
$\zeta_AG_T^G(A)=N^{-1}\one_A^\top+\varepsilon_A$ with
$\|\varepsilon_A\|_\infty=O_{K,\rho}(\log N/N^2)$ uniformly in $A$.
Multiplying on
the right by $(G_T^G(A))^{-1}$, using
Lemma~\ref{lem:uniform-green-invertibility} and $|A|\le K$, and summing over
$A$ with \eqref{eq:entrance-measure} gives the claim.
\end{proof}

\begin{proof}[Proof of Theorem~\ref{thm:uniform-finite-target-fvtl}]
By Lemma~\ref{lem:quasi-stationary-exponential},
\begin{equation}
   \log\bbP_\pi(H_A>t)
   =-tr_A+O_{K,C_0,\rho}(\log N/N).
\end{equation}
Lemma~\ref{lem:entrance-rate-green-capacity} gives
\begin{equation}
   tr_A=\frac{t}{N}\capGT(A)
   +O_{K,C_0,\rho}\!\left(\frac{(\log N)^2}{N}\right),
\end{equation}
uniformly for $1\le |A|\le K$ and $0\le t\le C_0N\log N$.
Combining the two estimates proves the theorem.
\end{proof}

\section{Capacities on the infinite regular tree}
\label{sec:tree-capacities}

This section computes the tree capacities that appear in
Section~\ref{sec:rrg-estimates} as local limits of the finite-graph
capacities $\capGT$.  The one-point capacity
(Lemma~\ref{lem:tree-basic-capacities}) gives the cover-time constant
$\theta_d$; the lower bound for sets of at least two points
(Lemma~\ref{lem:tree-cluster-gap}), which we call the \emph{cluster gap},
is what makes close tuples negligible.

Let $\Td$ be the infinite $d$-regular tree, $d\ge3$.  For simple random
walk on $\Td$, define the Green function and, for nonempty finite
$F\subset\Td$, the Green matrix and capacity
\begin{equation}
   g_{\Td}(x,y):=\sum_{t=0}^{\infty}\bbP_x(X_t=y),
   \qquad
   G_{\Td}(F):=\big(g_{\Td}(x,y)\big)_{x,y\in F},
   \qquad
   \Cap_{\Td}(F):=\one_F^\top G_{\Td}(F)^{-1}\one_F.
\end{equation}

\begin{lemma}
\label{lem:tree-green-function}
For all $x,y\in\Td$,
\begin{equation}
   g_{\Td}(x,y)=\theta_d(d-1)^{-\dist_{\Td}(x,y)}
\end{equation}
The Green operator $G_{\Td}:=(I-P_{\Td})^{-1}$ is bounded and strictly
positive on $\ell^2(\Td)$, and for every nonempty finite $F\subset\Td$
the matrix $G_{\Td}(F)$ is positive definite with
\begin{equation}
   \|G_{\Td}(F)^{-1}\|_{\op}\le1+\beta_d,
\end{equation}
where $\beta_d=\|P_{\Td}\|_{2\to2}=\frac{2\sqrt{d-1}}d<1$ is the spectral radius.
\end{lemma}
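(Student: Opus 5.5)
The plan has three steps: compute the Green function directly, show the Green operator is a bounded positive operator on $\ell^2(\Td)$, and then bound the inverse of each finite Green matrix by restricting that operator to $F$.

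\textbf{The Green function.} By symmetry of $\Td$, $g_{\Td}(x,y)$ depends only on $k=\dist_{\Td}(x,y)$.

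First consider $k=0$. At each step the distance from the root moves outward with probability $(d-1)/d$ and inward with probability $1/d$, except at the root itself. A walk started at a neighbour of $x$ therefore returns to $x$ with probability $q=1/(d-1)$, by the gambler's ruin computation for the biased walk. Hence the return probability from $x$ is also $q$. The number of visits to $x$ is geometric, so
\[
g_{\Td}(x,x)=\frac{1}{1-q}=\frac{d-1}{d-2}=\theta_d .
\]
For $k\ge1$, a walk from $x$ must pass through each vertex of the geodesic to $y$, and each step toward $y$ is reached with probability $q$. The strong Markov property gives $\bbP_x(H_y<\infty)=q^k$. Therefore
\[
g_{\Td}(x,y)=q^k\,g_{\Td}(y,y)=\theta_d(d-1)^{-k}.
\]

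\textbf{The operator.} $P_{\Td}$ is self-adjoint on $\ell^2(\Td)$, and by Kesten's theorem its norm equals its spectral radius, $\beta_d=2\sqrt{d-1}/d$. We have $\beta_d<1$ because $d^2-4(d-1)=(d-2)^2>0$. Hence the Neumann series $\sum_t P_{\Td}^t$ converges in operator norm, and $G_{\Td}=(I-P_{\Td})^{-1}$ is bounded. Its matrix entries are $g_{\Td}(x,y)$, which are finite by the first step. The spectrum of $P_{\Td}$ lies in $[-\beta_d,\beta_d]$. By the spectral theorem, the spectrum of $G_{\Td}$ lies in $[1/(1+\beta_d),\,1/(1-\beta_d)]$, so $G_{\Td}\ge (1+\beta_d)^{-1}I$ and in particular $G_{\Td}$ is strictly positive.

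\textbf{Finite sets.} For finite nonempty $F$ and $v\in\mathbb R^F$, extend $v$ by zero to an element of $\ell^2(\Td)$. Then
\[
v^\top G_{\Td}(F)v=\langle v,G_{\Td}v\rangle\ge(1+\beta_d)^{-1}\|v\|^2 .
\]
So $G_{\Td}(F)$ is positive definite with smallest eigenvalue at least $(1+\beta_d)^{-1}$. Since it is symmetric, $\|G_{\Td}(F)^{-1}\|_{\op}\le 1+\beta_d$.

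The only nonelementary input is Kesten's spectral radius formula, which I would cite. Everything else is the one-dimensional return-probability computation and the compression argument in the last step.
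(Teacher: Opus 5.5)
Your proof is correct and follows the paper's route. The paper only cites Woess for the Green function and Kesten for $\beta_d$; your gambler's-ruin computation of $g_{\Td}$ and the spectral-theorem/compression bound $\langle v,G_{\Td}v\rangle\ge(1+\beta_d)^{-1}\|v\|^2$ (the tree analogue of the interlacing step in Lemma~\ref{lem:uniform-green-invertibility}) fill in those classical details, with Kesten's identification $\|P_{\Td}\|_{2\to2}=2\sqrt{d-1}/d$ as the only external input, exactly as in the paper.
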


\begin{proof}
These are classical results; see for example
Woess~\cite{Woess2000} for the Green function of $\Td$ and
Kesten~\cite{Kesten1959} for the spectral radius $\beta_d$.
\end{proof}

The next lemma is needed twice: the singleton capacity determines the scale, and the two-point formula gives the cluster gap.

\begin{lemma}
\label{lem:tree-basic-capacities}
The one-point capacity is
\begin{equation}\label{re.5}
   \Cap_{\Td}(\{o\})=\theta_d^{-1}.
\end{equation}
If $x\ne y$ and $r=\dist_{\Td}(x,y)$, then
\begin{equation}
   \Cap_{\Td}(\{x,y\})=\frac{2\theta_d^{-1}}{1+(d-1)^{-r}}.
\end{equation}
In particular,
\begin{equation}
   \Cap_{\Td}(\{x,y\})\ge \frac{2(d-1)}d\theta_d^{-1}.
\end{equation}
\end{lemma}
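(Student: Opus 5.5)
The plan is to compute directly from the definition $\Cap_{\Td}(F)=\one_F^\top G_{\Td}(F)^{-1}\one_F$, using the explicit Green function of Lemma~\ref{lem:tree-green-function}.

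\emph{One-point capacity.} For $F=\{o\}$ the Green matrix is the $1\times1$ matrix $g_{\Td}(o,o)=\theta_d(d-1)^0=\theta_d$. Its inverse is $\theta_d^{-1}$, which gives \eqref{re.5}.

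\emph{Two-point capacity.} For $F=\{x,y\}$ with $r=\dist_{\Td}(x,y)\ge1$, set $q:=(d-1)^{-r}$. Then
\begin{equation*}
   G_{\Td}(F)=\theta_d\begin{pmatrix}1&q\\ q&1\end{pmatrix}.
\end{equation*}
The vector $\one_F=(1,1)^\top$ is an eigenvector of this matrix with eigenvalue $\theta_d(1+q)$. Hence $G_{\Td}(F)^{-1}\one_F=\theta_d^{-1}(1+q)^{-1}\one_F$, and therefore
\begin{equation*}
   \Cap_{\Td}(F)=\frac{2\theta_d^{-1}}{1+q}.
\end{equation*}
The matrix is invertible since $q<1$, which is consistent with Lemma~\ref{lem:tree-green-function}.

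\emph{Lower bound.} The expression $2\theta_d^{-1}/(1+q)$ is increasing in $r$, because $q=(d-1)^{-r}$ decreases in $r$. Its minimum is therefore at $r=1$, where $q=1/(d-1)$ and $1+q=d/(d-1)$. This gives
\begin{equation*}
   \Cap_{\Td}(\{x,y\})\ge\frac{2(d-1)}{d}\,\theta_d^{-1}.
\end{equation*}

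There is no real obstacle here. The only input is the Green function formula, which is taken as classical in Lemma~\ref{lem:tree-green-function}. If one wanted to verify it, note that $g_{\Td}(o,o)=1/(1-F_{\mathrm{ret}})$. A step away from $o$ returns with probability $1/(d-1)$: the distance process is a biased walk with up-probability $(d-1)/d$, and gambler's ruin gives this return probability. Hence $F_{\mathrm{ret}}=1/(d-1)$ and $g_{\Td}(o,o)=(d-1)/(d-2)=\theta_d$. By the same gambler's-ruin computation, the probability of ever hitting a vertex at distance $r$ is $(d-1)^{-r}$, which gives the off-diagonal entries.
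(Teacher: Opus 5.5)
Your proposal is correct and follows the paper's proof: both read off the Green matrix from Lemma~\ref{lem:tree-green-function} and evaluate $\one_F^\top G_{\Td}(F)^{-1}\one_F$ directly. Noting that $\one_F$ is an eigenvector of $G_{\Td}(\{x,y\})$ is a slightly quicker substitute for the explicit $2\times2$ inversion, and the lower bound via $r\ge1$ is the same as in the paper.
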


\begin{proof}
By Lemma~\ref{lem:tree-green-function},
$\Cap_{\Td}(\{o\})=g_{\Td}(o,o)^{-1}=\theta_d^{-1}$,
which proves \eqref{re.5}.
For the pair, Lemma~\ref{lem:tree-green-function} gives
\begin{equation}
   G_{\Td}(\{x,y\})=\theta_d
   \begin{pmatrix}
      1 & a\\
      a & 1
   \end{pmatrix},
   \qquad
   a=(d-1)^{-r}.
\end{equation}
Inverting this matrix and using the definition of $\Cap_{\Td}(\{x,y\})$ gives
\begin{equation}
   \Cap_{\Td}(\{x,y\})
   =\frac{2(1-a)}{\theta_d(1-a^2)}
   =\frac{2\theta_d^{-1}}{1+(d-1)^{-r}}.
\end{equation}
Since $r\ge1$,
$
   1+(d-1)^{-r}\le \frac d{d-1},
$
and the lower bound follows.
\end{proof}

The following lemma identifies the matrix definition with the usual network
capacity.  Besides making the normalization explicit, it supplies the
monotonicity needed to extend the two-point lower bound to arbitrary clusters.

\begin{lemma}[Variational formula, monotonicity and cluster gap]
\label{lem:tree-cluster-gap}
For every nonempty finite $F\subset\Td$,
\begin{equation}\label{re.6}
   \Cap_{\Td}(F)
   =\inf\left\{\mathcal E_{\Td}(f,f):
   f\in C_c(\Td),\ f\equiv1\text{ on }F\right\},
\end{equation}
where the edge conductance is $1/d$ and
\begin{equation}
   \mathcal E_{\Td}(f,f)
   :=\frac12\sum_{x\in\Td}\sum_{y\sim x}\frac1d
          \bigl(f(x)-f(y)\bigr)^2.
\end{equation}
If $A\subseteq B\subset\Td$ are finite, then
\begin{equation}
   \Cap_{\Td}(A)\le \Cap_{\Td}(B).
\end{equation}
Consequently, every finite $B\subset\Td$ with $|B|\ge2$ satisfies
\begin{equation}
   \Cap_{\Td}(B)\ge \frac{2(d-1)}d\theta_d^{-1}
  .
\end{equation}
\end{lemma}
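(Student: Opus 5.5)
I will first establish the variational formula \eqref{re.6}; monotonicity and the cluster gap then follow quickly.

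\emph{Matrix capacity equals Dirichlet infimum.} Let $\mu:=G_{\Td}(F)^{-1}\one_F$, extended by $0$ off $F$, and set $h:=G_{\Td}\mu=\sum_{a\in F}\mu(a)g_{\Td}(\cdot,a)$. By construction $h\equiv1$ on $F$, and $(I-P_{\Td})h=\mu$. By Lemma~\ref{lem:tree-green-function}, $h$ decays like $(d-1)^{-\dist(\cdot,F)}$. The ball of radius $R$ has about $(d-1)^R$ vertices, so $h\in\ell^2(\Td)$. The gradient of $h$ decays at the same rate, so $\mathcal E_{\Td}(h,h)<\infty$.

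Now write the Dirichlet form with edge conductance $1/d$ as $\mathcal E_{\Td}(f,f)=\langle f,(I-P_{\Td})f\rangle_{\ell^2}$. This identity holds for $f\in C_c$, and then for $f\in\ell^2$ by continuity, since $I-P_{\Td}$ is bounded. For $h$ this gives
\[
\mathcal E_{\Td}(h,h)=\langle h,\mu\rangle=\one_F^\top\mu=\Cap_{\Td}(F).
\]

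For any $f\in C_c(\Td)$ with $f\equiv1$ on $F$, put $\phi:=f-h\in\ell^2$, which vanishes on $F$. Then
\[
\mathcal E(f,f)=\mathcal E(h,h)+2\langle\phi,(I-P)h\rangle+\mathcal E(\phi,\phi)=\mathcal E(h,h)+\mathcal E(\phi,\phi)\ge\Cap_{\Td}(F),
\]
because $(I-P)h=\mu$ is supported on $F$ and $\phi=0$ there. So the infimum is at least $\Cap_{\Td}(F)$.

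For the reverse inequality, approximate $h$ by $h_R:=h\,\one_{B_R(F)}$, which lies in $C_c$ and equals $1$ on $F$. Then $\mathcal E(h_R-h,h_R-h)$ is bounded by the energy of $h$ on edges at distance at least $R$ from $F$, plus boundary edges. Both contributions are $O\bigl((d-1)^{R}(d-1)^{-2R}\bigr)\to0$. Energy convergence then gives $\mathcal E(h_R,h_R)\to\mathcal E(h,h)$.

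I expect the main obstacle to be this step: handling the identity $\mathcal E=\langle f,(I-P)f\rangle$ on $\ell^2$ and the truncation. The exponential decay of the Green function, beating the volume growth, settles it.

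\emph{Monotonicity.} If $A\subseteq B$, every admissible $f$ for $B$ (that is, $f\equiv1$ on $B$) is also admissible for $A$. Taking infima in \eqref{re.6} gives $\Cap_{\Td}(A)\le\Cap_{\Td}(B)$.

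\emph{Cluster gap.} If $|B|\ge2$, pick distinct $x,y\in B$. Monotonicity and Lemma~\ref{lem:tree-basic-capacities} give
\[
\Cap_{\Td}(B)\ge\Cap_{\Td}(\{x,y\})\ge\frac{2(d-1)}{d}\theta_d^{-1}.
\]
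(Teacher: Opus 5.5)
Your proof is correct, but for the variational formula it takes a different route from the paper.

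The paper does not prove \eqref{re.6}. It cites the Dirichlet principle (Lyons--Peres, Ch.~2) and reconciles normalizations through the last-exit decomposition. This identifies $G_{\Td}(F)^{-1}\one_F$ with the equilibrium measure $e_F(x)=\bbP_x(X_t\notin F\text{ for all }t\ge1)$, so the matrix capacity becomes $\sum_{x\in F}e_F(x)$, the effective conductance to infinity.

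You instead prove the Dirichlet principle directly in $\ell^2(\Td)$. The potential $h=G_{\Td}\mu$ is the minimizer, and the Pythagorean identity $\mathcal E(f,f)=\mathcal E(h,h)+\mathcal E(f-h,f-h)$ does the rest. This is self-contained, works straight from the matrix definition, and needs neither the sign nor the probabilistic meaning of $\mu$.

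What the paper's route buys is exactly that probabilistic meaning. The formula $\Cap_{\Td}(F)=\sum_{x\in F}e_F(x)$, with $e_F$ the escape probability, is what Lemma~\ref{lem:connected-tree-capacity} later uses to compute the capacity of connected sets. With your approach you would have to supply the last-exit identity separately at that point.

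Two small remarks on your argument:
\begin{itemize}
\item Extending $\mathcal E(f,f)=\langle f,(I-P_{\Td})f\rangle$ from $C_c$ to $\ell^2$ requires continuity of the left-hand side as well, not only boundedness of $I-P_{\Td}$. This follows from $\mathcal E(f,f)\le 2\|f\|_2^2$.
\item The same bound makes the truncation step immediate, since $h_R\to h$ in $\ell^2$. No edge-by-edge energy estimate is needed.
\end{itemize}

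Monotonicity and the cluster gap are argued exactly as in the paper.
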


\begin{proof}
Identity \eqref{re.6} is the Dirichlet principle for the reversible walk
on $\Td$ with conductances $1/d$; see for example
\cite[Chapter~2]{LyonsPeres2016}.  In the present normalization,
$e_F:=G_{\Td}(F)^{-1}\one_F$ is the equilibrium measure,
$e_F(x)=\bbP_x(X_t\notin F\text{ for all }t\ge1)$ for $x\in F$, by
the last-exit decomposition $\sum_{y\in F}g_{\Td}(x,y)e_F(y)=1$ for
$x\in F$; thus $\Cap_{\Td}(F)=\sum_{x\in F}e_F(x)$.
If $A\subseteq B$, every function admissible for $B$ in \eqref{re.6} is
also admissible for $A$; taking infima gives
$\Cap_{\Td}(A)\le\Cap_{\Td}(B)$.

If $|B|\ge2$, choose distinct $x,y\in B$.  By monotonicity and Lemma~\ref{lem:tree-basic-capacities},
\begin{equation}
   \Cap_{\Td}(B)
   \ge \Cap_{\Td}(\{x,y\})
   \ge \frac{2(d-1)}d\theta_d^{-1}.
\end{equation}
\end{proof}

\section{Random regular graph estimates}
\label{sec:rrg-estimates}

This section collects the estimates on a typical random regular graph that
are needed in Section~\ref{sec:proof-main}.  Propositions~\ref{prop:typical-rrg-event}
and~\ref{prop:diagonal-event} describe the typical graph event.  The main
result is the local comparison, Proposition~\ref{prop:local-green-comparison},
which turns finite-graph capacities of targets away from short cycles into
tree capacities with precision $o(1/\log N)$.  Targets near short cycles are
handled by the cruder Proposition~\ref{prop:rough-capacity-bound}, and
Lemma~\ref{lem:cluster-profiles} combines both into the form used later.

\paragraph{Parameters.}
Throughout this section $G_N$ is a uniform simple $d$-regular graph on $N$
labelled vertices, $\rho<1$ is the constant of \textup{(T2)} below, and
$T=T_N$ is the truncation level \eqref{eq:truncation-level}.  We fix
$0<\alpha<1$ and set
\begin{equation}
   L_N:=\big\lfloor(\log N)^\alpha\big\rfloor .
\end{equation}
Targets are split into clusters on the scale $2L_N$.  Since $\alpha<1$,
balls of radius $O(L_N)$ contain $N^{o(1)}$ vertices, while
$\rho^{L_N}$, $\beta_d^{L_N}$ and $T/N$ are $o(1/\log N)$.  For $K\in\mathbb N$, an
upper bound for the size of target sets, define the set of locally bad
vertices
\begin{equation}
   \mathcal B_{N,K}:=\{v\in V_N:B_G(v,10KL_N)\text{ is not a tree}\}.
\end{equation}
When we work with $k$-tuples we always take $K=k$.

\begin{proposition}
\label{prop:typical-rrg-event}
Fix $d\ge3$ and $K<\infty$.  With probability tending to one, $G_N$ satisfies the following properties.
\begin{enumerate}[label=\textup{(T\arabic*)}]
\item $G_N$ is connected and non-bipartite.
\item There is $\rho<1$, depending only on $d$, such that every nontrivial eigenvalue of the random-walk transition matrix lies in $[-\rho,\rho]$.
\item The number of locally bad vertices satisfies
\begin{equation}
   |\mathcal B_{N,K}|=N^{o(1)}.
\end{equation}
\item For every fixed $k\ge2$, the number of ordered $k$-tuples with some pair at distance at most $2L_N$ is at most
\begin{equation}
   N^{k-1+o(1)}.
\end{equation}
For $k=1$ this number is zero.
\end{enumerate}
\end{proposition}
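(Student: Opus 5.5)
Since the four properties are of different nature, I will establish each one separately with probability $1-o(1)$ and conclude by a union bound. The standard tool is the configuration (pairing) model: a uniform simple $d$-regular graph is the pairing model conditioned on simplicity, and this event has probability bounded below by a constant $c_d>0$ (Bender--Canfield, Bollob\'as). Hence any event of vanishing probability under the pairing model also has vanishing probability under $\bbP$. This reduction lets me compute in the pairing model throughout.

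\textbf{(T1) and (T2).} For (T2) I will invoke Friedman's theorem~\cite{Friedman2008} (see also \cite{Bordenave2019}). For every $\varepsilon>0$, with high probability all nontrivial eigenvalues of the adjacency matrix have modulus at most $2\sqrt{d-1}+\varepsilon$. Dividing by $d$ and choosing $\varepsilon$ small gives $\rho:=(2\sqrt{d-1}+\varepsilon)/d<1$, which depends only on $d$ since $d\ge3$. Connectivity follows because the eigenvalue $1$ of $P$ is then simple, and non-bipartiteness because $-1$ is then not an eigenvalue. So (T1) comes out of (T2) at no extra cost.

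\textbf{(T3).} A vertex $v$ is bad exactly when $B_G(v,R)$ with $R=10KL_N$ contains a cycle. Any such cycle has length at most $2R+1$ and lies within distance $R$ of $v$. The expected number of cycles of length $\ell$ in the pairing model is at most $(d-1)^\ell/(2\ell)\,(1+o(1))$, uniformly for $\ell\le 2R+1=O((\log N)^\alpha)$; I will check this uniformity by the usual first-moment computation, in which the relative error is $O(\ell^2/N)$. Summing over $\ell\le 2R+1$ gives $\mathbb E[\#\text{cycles of length}\le 2R+1]\le (d-1)^{O((\log N)^\alpha)}=N^{o(1)}$. Each such cycle makes at most $(2R+1)\,d(d-1)^{R}=N^{o(1)}$ vertices bad. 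Therefore $\mathbb E|\mathcal B_{N,K}|\le N^{o(1)}$, and Markov's inequality with a slightly larger $N^{o(1)}$ factor (say $\exp((\log N)^{(1+\alpha)/2})$) yields (T3).

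\textbf{(T4).} This one is deterministic given bounded degree. Let $k\ge2$ and consider ordered $k$-tuples in which some pair $(x_i,x_j)$ satisfies $\dist(x_i,x_j)\le 2L_N$. There are $\binom k2$ choices of the pair and $N$ choices of $x_i$. Given $x_i$, there are at most $|B_G(x_i,2L_N)|\le d(d-1)^{2L_N}=N^{o(1)}$ choices of $x_j$, and $N^{k-2}$ choices of the remaining entries. The total is $\binom k2 N^{k-1+o(1)}$. For $k=1$ there is no pair, so the count is zero.

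The main obstacle is the uniformity in (T3): cycle counts of length growing like $(\log N)^\alpha$ must be controlled, and the conditioning on simplicity must be handled. Because $\ell\ll\sqrt N$, the first-moment bound stays valid, and the contiguity factor $c_d^{-1}$ only adds a constant.
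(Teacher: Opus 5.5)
Your proposal is correct and follows essentially the paper's argument. Friedman's theorem gives (T2) and hence (T1); (T3) comes from a first-moment bound in the configuration model plus Markov's inequality; (T4) is the same deterministic ball count; and the transfer to the simple model uses that simplicity has probability bounded below. The one difference is in (T3): you count cycles of length at most $2R+1$ together with their $R$-neighbourhoods, whereas the paper bounds the collision probability of the radius-$R$ exploration from each vertex, which is an equivalent computation.

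One wording fix: a non-tree ball need not have all its cycles of length at most $2R+1$. It always contains at least one such cycle, obtained from a BFS tree from $v$ plus one extra edge, and that is all your count uses.
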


\begin{proof}
Friedman's theorem~\cite{Friedman2008} (see also~\cite{Bordenave2019})
gives that, with probability tending to one, all adjacency eigenvalues other
than $d$ are at most $2\sqrt{d-1}+o(1)$ in absolute value.  Dividing by $d$
gives \textup{(T2)} for any fixed $\rho\in(2\sqrt{d-1}/d,1)$.  Property
\textup{(T1)} is a consequence of \textup{(T2)}: the eigenvalue $1$ is
simple, so $G_N$ is connected, and $-1$ is not an eigenvalue, so $G_N$ is
not bipartite.  We list it separately only for later reference.

For local tree-likeness, use the configuration model~\cite{Bollobas1980,Wormald1999}.  The radius-$R$ exploration from a fixed vertex has size at most $\exp(O(R))$.  The probability that this exploration creates a collision is at most
\begin{equation}
   \frac{\exp(O(R))}{N}.
\end{equation}
With $R=10KL_N$, the expected number of vertices with non-tree $R$-balls is
\begin{equation}
   N\cdot \frac{\exp(O_K(L_N))}{N}=\exp(O_K(L_N))=N^{o(1)}.
\end{equation}
Markov's inequality gives $|\mathcal B_{N,K}|=N^{o(1)}$ with high probability.  Finally, a ball of radius $2L_N$ has size at most
\begin{equation}
   \exp(O(L_N))=N^{o(1)}.
\end{equation}
Thus, for $k\ge2$, after choosing one vertex and one close partner, the
remaining $k-2$ vertices are arbitrary, which gives the asserted upper
bound.  There is no close-pair tuple when $k=1$.
The probability that the configuration model is simple converges to a
strictly positive constant for fixed $d$~\cite{Bollobas1980,Wormald1999}.  Therefore each $o(1)$ failure
probability above remains $o(1)$ after conditioning on simplicity, which
transfers the estimates to the uniform simple model.
\end{proof}

The preceding proposition is stated at each fixed target size.  The following
standard diagonalization produces one graph event on which every fixed size is
eventually available; this is the event used below.

\begin{proposition}
\label{prop:diagonal-event}
There are integers $K_N\uparrow\infty$ and graph events $\mathcal E_N$ with
$\bbP(G_N\in\mathcal E_N)\to1$ such that, on $\mathcal E_N$,
properties \textup{(T1)}--\textup{(T2)} hold and, simultaneously for every
$1\le K,k\le K_N$,
\begin{equation}
   |\mathcal B_{N,K}|\le N^{1/K_N}
\end{equation}
and, for $k\ge2$, the number of ordered $k$-tuples containing a pair at
distance at most $2L_N$ is at most
\begin{equation}
   N^{k-1+1/K_N}.
\end{equation}
For $k=1$ the latter count is zero.  In particular, for every fixed $K$ and
$k$, these bounds are $N^{o(1)}$ and $N^{k-1+o(1)}$, respectively.
\end{proposition}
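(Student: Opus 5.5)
We derive Proposition~\ref{prop:diagonal-event} from Proposition~\ref{prop:typical-rrg-event} by a standard diagonal argument. The only subtlety is that the $o(1)$ exponents in \textup{(T3)} and \textup{(T4)} have to be turned into one explicit rate $1/K_N$ that works for every $K,k\le K_N$.

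\textbf{Step 1: fixed-size events.} For fixed $K$, $k$ and $m\in\mathbb N$, define the graph event $\mathcal E_N(K,k,m)$ on which:
\begin{itemize}[label={}]
\item \textup{(T1)}--\textup{(T2)} hold;
\item $|\mathcal B_{N,K}|\le N^{1/m}$;
\item for $k\ge2$, the number of ordered $k$-tuples with a pair at distance at most $2L_N$ is at most $N^{k-1+1/m}$.
\end{itemize}
For $k=1$ this count is automatically zero, since a single vertex contains no pair. Proposition~\ref{prop:typical-rrg-event} states that the relevant quantities are $N^{o(1)}$ and $N^{k-1+o(1)}$ with probability tending to one. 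For fixed $m$, the $o(1)$ exponent is eventually below $1/m$, so $\bbP(G_N\in\mathcal E_N(K,k,m))\to1$ as $N\to\infty$.

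If the proof of Proposition~\ref{prop:typical-rrg-event} is read as giving a random exponent, this step should be made precise as follows. Markov's inequality applied to $\bbE|\mathcal B_{N,K}|\le\exp(O_K(L_N))$ gives
\[
\bbP\bigl(|\mathcal B_{N,K}|>N^{1/m}\bigr)\le\exp\bigl(O_K((\log N)^\alpha)\bigr)\,N^{-1/m}\to0,
\]
because $\alpha<1$. The same computation applies to the tuple count, which is deterministically at most $N\cdot\exp(O(L_N))\cdot N^{k-2}$.

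\textbf{Step 2: the finite intersection.} Let
\[
\mathcal F_N(m):=\bigcap_{K,k\le m}\mathcal E_N(K,k,m).
\]
This is a finite intersection of events, each of probability tending to one, so $\bbP(G_N\in\mathcal F_N(m))\to1$ for every fixed $m$. Choose an increasing sequence $N_1<N_2<\cdots$ such that $\bbP(G_N\notin\mathcal F_N(m))\le1/m$ for all $N\ge N_m$.

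\textbf{Step 3: the diagonal choice.} Set $K_N:=\max\{m:N\ge N_m\}$, with $K_N:=1$ for $N<N_1$, and put $\mathcal E_N:=\mathcal F_N(K_N)$. Then $K_N\uparrow\infty$ and $\bbP(G_N\notin\mathcal E_N)\le1/K_N\to0$. On $\mathcal E_N$, properties \textup{(T1)}--\textup{(T2)} hold, and every pair $K,k\le K_N$ satisfies the bounds with exponent $1/K_N$, as required.

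For fixed $K,k$ we have $K\le K_N$ eventually, and $1/K_N\to0$, which gives the final assertion. The constant $\rho$ in \textup{(T2)} depends only on $d$, so it is the same in all events being intersected.

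The main obstacle is Step 1, namely extracting a quantitative rate from the qualitative $o(1)$ statements. The Markov bounds above, together with $\alpha<1$, supply this rate. After that, the diagonalization is routine.
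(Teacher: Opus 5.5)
Your proposal is correct and follows the paper's proof essentially verbatim: you intersect the fixed-level events over all $K,k\le m$, choose an increasing sequence $N_m$ with failure probability at most $1/m$, and define $K_N$ as the largest admissible $m$. Your explicit Markov bound $\exp(O_K((\log N)^\alpha))N^{-1/m}\to0$ (using $\alpha<1$) and your remark that the close-tuple count is deterministically $N^{k-1}\exp(O(L_N))$ only spell out what the paper summarizes as ``the proof of Proposition~\ref{prop:typical-rrg-event}, followed by a finite union bound.''
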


\begin{proof}
For an integer $M\ge2$, let $\mathcal E_N(M)$ be the event that
\textup{(T1)}--\textup{(T2)} hold, that
$|\mathcal B_{N,K}|\le N^{1/M}$ for every $K\le M$, and that the close-tuple
count is at most $N^{k-1+1/M}$ for every $2\le k\le M$.  The proof of
Proposition~\ref{prop:typical-rrg-event}, followed by a finite union bound,
shows that $\bbP(\mathcal E_N(M))\to1$ for each fixed $M$.  Choose $N_M$
recursively so that it is strictly increasing, $N_M\ge M$, and
\begin{equation}
   \bbP(\mathcal E_N(M))\ge1-M^{-1}
   \qquad(N\ge N_M),
\end{equation}
and define $K_N$ to be the largest $M$ with $N_M\le N$ (changing finitely
many initial values if necessary).  Then $K_N\to\infty$ and
$\mathcal E_N:=\mathcal E_N(K_N)$ has the required properties.
\end{proof}

We next compare finite-graph Green entries with tree Green entries.  The
precision $o(1/\log N)$ is what the second-order window requires, since
capacities are multiplied by $t_N(s)/N=\theta_d(\log N+s)$.  No tree
comparison is asserted for targets meeting $\mathcal B_{N,K}$; these are
handled by Proposition~\ref{prop:rough-capacity-bound}.

\begin{proposition}
\label{prop:local-green-comparison}
Work on the event of Proposition~\ref{prop:typical-rrg-event}.  Fix $K<\infty$.  Let $A\subseteq V_N$ with $|A|\le K$, and  $A\cap \mathcal B_{N,K}=\emptyset$.  Decompose $A=A_1\cup\cdots\cup A_m$ into $2L_N$-clusters, meaning equivalence classes generated by the relation
\begin{equation}
   x\sim y
   \quad\Longleftrightarrow\quad
   \dist_G(x,y)\le2L_N.
\end{equation}
Then each $A_i$ is identified with a finite set $A_i^*\subset\Td$, and
\begin{equation}
   \capGT(A)=\sum_{i=1}^m\Cap_{\Td}(A_i^*)+o(1/\log N)
\end{equation}
uniformly over all such $A$.
\end{proposition}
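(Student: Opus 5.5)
We compare the truncated Green matrix $G_T^G(A)$ entrywise with a block-diagonal matrix built from tree Green functions, up to an error $o(1/\log N)$, and then use the uniform invertibility bounds to pass to the capacities.

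\emph{Identifying the clusters with tree sets.} Each cluster $A_i$ has at most $K$ points, consecutive ones within distance $2L_N$, so $A_i$ has diameter at most $2KL_N$. Fix $a_i\in A_i$. Since $a_i\notin\mathcal B_{N,K}$, the ball $B_G(a_i,10KL_N)$ is a tree. It is therefore isometric to the corresponding ball in $\Td$, and this isometry defines $A_i^*$ and preserves distances inside the cluster. For $x,y$ in different clusters we have $\dist_G(x,y)>2L_N$.

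\emph{Diagonal blocks.} Let $x,y\in A_i$. Split $g_T^G(x,y)=\sum_{j\le L_N}P^j(x,y)+\sum_{L_N<j\le T}P^j(x,y)$. For $j\le L_N$ a walk from $x$ stays in $B_G(x,L_N)\subset B_G(a_i,10KL_N)$, which is a tree. Hence $P^j(x,y)=P^j_{\Td}(x^*,y^*)$ exactly. For the tail, \eqref{eq:pointwise-mixing} gives $\sum_{L_N<j\le T}P^j(x,y)\le T/N+\rho^{L_N}/(1-\rho)=o(1/\log N)$. On the tree side, $\sum_{j>L_N}P^j_{\Td}(x^*,y^*)\le\sum_{j>L_N}\beta_d^j=o(1/\log N)$, using $P_{\Td}^j(u,v)\le\beta_d^j$. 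Together these give $g_T^G(x,y)=g_{\Td}(x^*,y^*)+o(1/\log N)$.

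\emph{Off-diagonal blocks.} Let $x\in A_i$ and $y\in A_j$ with $i\ne j$. Then $P^t(x,y)=0$ for $t\le2L_N$, and the tail estimate above gives $g_T^G(x,y)=o(1/\log N)$. The key point is that we need no tree comparison between different clusters, because of the scale separation $\rho^{L_N}=o(1/\log N)$.

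\emph{Passing to capacities.} Set $D:=\diag(G_{\Td}(A_1^*),\dots,G_{\Td}(A_m^*))$. We have shown $\|G_T^G(A)-D\|_{\op}\le K\max|\text{entry error}|=o(1/\log N)$. By Lemma~\ref{lem:uniform-green-invertibility}, $\|G_T^G(A)^{-1}\|_{\op}\le c_\rho^{-1}$, and by Lemma~\ref{lem:tree-green-function}, $\|D^{-1}\|_{\op}\le1+\beta_d$. The resolvent identity $G^{-1}-D^{-1}=G^{-1}(D-G)D^{-1}$ then gives $\|G_T^G(A)^{-1}-D^{-1}\|_{\op}=o(1/\log N)$. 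Since $\|\one_A\|^2\le K$,
\[
\capGT(A)=\one_A^\top D^{-1}\one_A+o(1/\log N)=\sum_i\Cap_{\Td}(A_i^*)+o(1/\log N),
\]
where the last equality holds because $D^{-1}$ is block diagonal. All bounds depend only on $K,\rho,d$, so the estimate is uniform in $A$.

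\emph{Main obstacle.} The most delicate step is the diagonal-block comparison. One must check that short walks from $x$ up to time $L_N$ really stay inside the tree ball. This holds because $x$ lies within $2KL_N$ of $a_i$, so $B_G(x,L_N)\subset B_G(a_i,3KL_N)$, which sits inside the tree ball of radius $10KL_N$. Beyond that, the argument only needs $\rho^{L_N}$, $\beta_d^{L_N}$ and $T/N$ to be $o(1/\log N)$, which the choice $L_N=\lfloor(\log N)^\alpha\rfloor$ provides.
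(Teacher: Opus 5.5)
Your proof is correct and follows the paper's argument essentially step by step. It uses exact agreement of short-time transition probabilities inside the tree ball around an anchor, spectral-gap and $\beta_d^j$ tail bounds for the diagonal blocks, and the $2L_N$ separation for the off-diagonal blocks; it then concludes with the resolvent identity and the uniform inverse bounds. The only cosmetic difference is that you cut the exact local comparison at $L_N$ steps instead of the paper's $4KL_N$, which works equally well since $\rho^{L_N}$ and $\beta_d^{L_N}$ are already $o(1/\log N)$.
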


\begin{proof}
We first explain the identification $A_i\mapsto A_i^*$.  A cluster contains
at most $K$ vertices and is generated by links of length at most $2L_N$, so
$\dist_G(x,y)\le2(K-1)L_N$ for $x,y\in A_i$.  Fix $x\in A_i$.  Since
$x\notin\mathcal B_{N,K}$, the ball $B_G(x,10KL_N)$ is a tree in which all
vertices at distance less than $10KL_N$ from $x$ have degree $d$, so there
is a graph isomorphism $\psi_x$ from $B_G(x,10KL_N)$ onto the ball of the
same radius around a root of $\Td$.  We set $A_i^*:=\psi_x(A_i)$ and
$y^*:=\psi_x(y)$ for $y\in A_i$; since $\psi_x$ preserves distances,
$G_{\Td}(A_i^*)$ does not depend on the choice of $x$.

We now compare entries of the Green matrix within $A_i$.  Every length-$j$ walk contributing to $P_G^j(x,y)$ stays inside this ball when $j\le4KL_N$.  Hence the transition probabilities agree with those in the corresponding tree neighborhood:
\begin{equation}
   P_G^j(x,y)=P_{\Td}^j(x^*,y^*).
\end{equation}
Therefore
\begin{equation}
   \sum_{j=0}^{4KL_N}P_G^j(x,y)
   =
   \sum_{j=0}^{4KL_N}P_{\Td}^j(x^*,y^*).
\end{equation}
The finite-graph tail is bounded by the spectral gap:
\begin{align}
   \sum_{j>4KL_N}^{T}P_G^j(x,y)
   &\le \sum_{j>4KL_N}^{T}\left(\frac1N+\rho^j\right) \\
   &\le \frac{T}{N}+C\rho^{4KL_N} \\
   &=o(1/\log N).
\end{align}
The tree tail is also $o(1/\log N)$, since
$P_{\Td}^j(x,y)\le\beta_d^j$ and $\beta_d<1$.  Hence
\begin{equation}
   g_T^G(x,y)=g_{\Td}(x^*,y^*)+o(1/\log N)
\end{equation}
within clusters.

If $x\in A_i$ and $y\in A_j$ with $i\ne j$, then by definition of the cluster decomposition
\begin{equation}
   \dist_G(x,y)>2L_N.
\end{equation}
Thus $P_G^j(x,y)=0$ for $j\le2L_N$, and the spectral gap gives
\begin{align}
   g_T^G(x,y)
   &\le \sum_{j>2L_N}^{T}\left(\frac1N+\rho^j\right) \\
   &\le \frac{T}{N}+C\rho^{2L_N} \\
   &=o(1/\log N).
\end{align}
Thus, after ordering the vertices of $A$ by clusters,
\begin{equation}
   G_T^G(A)=\diag\big(G_{\Td}(A_1^*),\dots,G_{\Td}(A_m^*)\big)+R_N,
   \qquad
   \|R_N\|_{\op}=o(1/\log N).
\end{equation}
The matrices have uniformly bounded inverse norms by Lemma~\ref{lem:uniform-green-invertibility} on the finite graph and by Lemma~\ref{lem:tree-green-function} on the tree.  The resolvent identity gives
\begin{equation}
   \big(G_T^G(A)\big)^{-1}
   =
   \diag\big(G_{\Td}(A_1^*)^{-1},\dots,G_{\Td}(A_m^*)^{-1}\big)
   +o(1/\log N).
\end{equation}
Multiplying by $\one_A$ on both sides yields
\begin{equation}
   \capGT(A)=\sum_{i=1}^m\Cap_{\Td}(A_i^*)+o(1/\log N).
\end{equation}
\end{proof}

For targets that meet $\mathcal B_{N,K}$ we only need a positive lower
bound, which holds without any tree comparison.

\begin{proposition}
\label{prop:rough-capacity-bound}
On the event of Proposition~\ref{prop:typical-rrg-event}, for every fixed
$K$ there is $\delta_{d,K}>0$ such that every nonempty $A\subseteq V_N$ with
$|A|\le K$ satisfies
\begin{equation}
   \capGT(A)\ge\delta_{d,K}.
\end{equation}
Moreover, if $A=A_1\cup\cdots\cup A_m$ is its $2L_N$-cluster decomposition, then
\begin{equation}
   \capGT(A)\ge \sum_{i=1}^m\capGT(A_i)-o(1).
\end{equation}
\end{proposition}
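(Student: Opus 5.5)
The first claim follows from the spectral bounds alone; the second is a perturbation argument for the block-diagonal part of $G_T^G(A)$.

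\emph{Lower bound $\capGT(A)\ge\delta_{d,K}$.} Let $G:=G_T^G(A)$, which is positive definite by Lemma~\ref{lem:uniform-green-invertibility}. The largest eigenvalue of $G$ is at most its maximal row sum, which is at most $K M_\rho$ by \eqref{eq:green-entry-bound}. So $G^{-1}\succeq (KM_\rho)^{-1}I$, and
\begin{equation}
   \capGT(A)=\one_A^\top G^{-1}\one_A\ge\frac{|A|}{KM_\rho}\ge\frac1{KM_\rho}.
\end{equation}
Since $\rho$ depends only on $d$ by \textup{(T2)}, this gives $\delta_{d,K}:=1/(KM_\rho)$. No tree structure is used.

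\emph{Superadditivity over clusters.} I will repeat the off-diagonal estimate from the proof of Proposition~\ref{prop:local-green-comparison}; that estimate used only the spectral gap, not tree-likeness. If $x\in A_i$, $y\in A_j$ with $i\ne j$, then $\dist_G(x,y)>2L_N$, so $P^j(x,y)=0$ for $j\le 2L_N$. Hence
\begin{equation}
   g_T^G(x,y)\le T/N+C\rho^{2L_N}=o(1/\log N).
\end{equation}
Write $G=D+R$, where $D=\diag(G_T^G(A_1),\dots,G_T^G(A_m))$ and $R$ has at most $K^2$ entries, each $o(1)$. Then $\|R\|_{\op}=o(1)$. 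Both $G$ and $D$ have smallest eigenvalue at least $c_\rho$: for $G$ this is Lemma~\ref{lem:uniform-green-invertibility}, and for $D$ it holds because each block $G_T^G(A_i)$ is itself a Green submatrix covered by that lemma. The resolvent identity $G^{-1}-D^{-1}=-G^{-1}RD^{-1}$ then gives
\begin{equation}
   \|G^{-1}-D^{-1}\|_{\op}\le c_\rho^{-2}\|R\|_{\op}=o(1).
\end{equation}
Since $\one_A^\top D^{-1}\one_A=\sum_i\capGT(A_i)$ and $|\one_A^\top(G^{-1}-D^{-1})\one_A|\le K\|G^{-1}-D^{-1}\|_{\op}$, we get
\begin{equation}
   \capGT(A)=\sum_i\capGT(A_i)+o(1),
\end{equation}
which in particular gives the stated inequality.

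The only point needing care is uniformity: the $o(1)$ must be uniform over all $A$ with $|A|\le K$, including $A$ meeting $\mathcal B_{N,K}$. This holds because every bound above depends only on $K$, $\rho$, $T$ and $L_N$, not on the local geometry of $A$.
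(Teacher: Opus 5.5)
Your proposal is correct and takes the same route as the paper. The entry bound $g_T^G\le M_\rho$ controls the largest eigenvalue of $G_T^G(A)$ by $KM_\rho$, which gives $\capGT(A)\ge 1/(KM_\rho)$; the only difference is that you use $G^{-1}\succeq (KM_\rho)^{-1}I$ where the paper uses $\capGT(A)\ge |A|^2/\one_A^\top G_T^G(A)\one_A$. The cluster statement then follows, as in the paper, from the spectral-gap bound on inter-cluster Green entries together with the resolvent identity and the uniform inverse bound $c_\rho^{-1}$, and you usefully make explicit the operator-norm bookkeeping and the uniformity that the paper leaves implicit.
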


\begin{proof}
By \eqref{eq:green-entry-bound}, $g_T^G(x,y)\le M_\rho$ for all
$x,y\in V_N$.  Thus every row sum of $G_T^G(A)$ is bounded by $KM_\rho$, and
the largest eigenvalue of $G_T^G(A)$ is at most $KM_\rho$.  Since the matrix is positive definite,
\begin{equation}
   \capGT(A)=\one_A^\top\big(G_T^G(A)\big)^{-1}\one_A
   \ge \frac{|A|^2}{\one_A^\top G_T^G(A)\one_A}
   \ge \frac1{KM_\rho}.
\end{equation}
This proves the lower bound with $\delta_{d,K}=(KM_\rho)^{-1}$.

For the block estimate, if $x\in A_i$ and $y\in A_j$ with $i\ne j$, then $\dist_G(x,y)>2L_N$.  As in the proof of Proposition~\ref{prop:local-green-comparison},
\begin{equation}
   g_T^G(x,y)=o(1)
\end{equation}
uniformly.  Thus $G_T^G(A)$ is a block diagonal matrix plus an $o(1)$ perturbation.  Uniform inverse bounds and the resolvent identity imply
\begin{equation}
   \capGT(A)=\sum_{i=1}^m\capGT(A_i)+o(1),
\end{equation}
which is stronger than the claimed lower bound.
\end{proof}

\paragraph{Cluster profiles.}
The last result of this section combines Propositions~\ref{prop:local-green-comparison}
and~\ref{prop:rough-capacity-bound} into the form used in
Sections~\ref{sec:proof-main}, \ref{sec:parallel-walks}
and~\ref{sec:late-set-structure}.  Fix $k\ge1$.  For an ordered $k$-tuple
$\mathbf v$ of distinct vertices, let $A(\mathbf v)=A_1\cup\cdots\cup A_m$
be the $2L_N$-cluster decomposition of $A(\mathbf v):=\{v_1,\dots,v_k\}$.
A cluster is \emph{bad} if it meets $\mathcal B_{N,k}$ and \emph{clean}
otherwise.  The \emph{profile} of $\mathbf v$ is $(a_1,a_2,b)$, where $a_1$
is the number of clean singleton clusters, $a_2$ the number of clean
non-singleton clusters and $b$ the number of bad clusters.  We call
$\mathbf v$ \emph{good} if its profile is $(k,0,0)$, that is, if
$v_i\notin\mathcal B_{N,k}$ for all $i$ and $\dist_G(v_i,v_j)>2L_N$ for all
$i\ne j$.

\begin{lemma}[Cluster profiles]
\label{lem:cluster-profiles}
Let $G\in\mathcal E_N$ and fix $k\ge1$.  Uniformly over profiles
$(a_1,a_2,b)$ and over tuples with that profile:
\begin{enumerate}[label=\textup{(\roman*)}]
\item the number of ordered $k$-tuples of distinct vertices with profile
$(a_1,a_2,b)$ is at most $N^{a_1+a_2+o(1)}$, and the number of good tuples
is $(1+o(1))N^k$;
\item every tuple $\mathbf v$ with profile $(a_1,a_2,b)$ satisfies
\begin{equation}
   \capGT(A(\mathbf v))
   \ge a_1\theta_d^{-1}+a_2\frac{2(d-1)}{d}\theta_d^{-1}
      +b\delta_{d,k}-o(1);
   \label{eq:profile-capacity}
\end{equation}
\item every good tuple satisfies
$\capGT(A(\mathbf v))=k\theta_d^{-1}+o(1/\log N)$.
\end{enumerate}
\end{lemma}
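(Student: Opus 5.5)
We prove the three parts separately, in the order (iii), (ii), (i), since the capacity statements are direct consequences of Section~\ref{sec:rrg-estimates}, while (i) is pure counting on $\mathcal E_N$.

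\emph{Part (iii).} For a good tuple all $2L_N$-clusters are singletons and none meets $\mathcal B_{N,k}$. Proposition~\ref{prop:local-green-comparison} with $K=k$ then gives $\capGT(A(\mathbf v))=\sum_{i=1}^k\Cap_{\Td}(\{v_i^*\})+o(1/\log N)$. Lemma~\ref{lem:tree-basic-capacities} evaluates each summand as $\theta_d^{-1}$. The error is uniform over such $A$, as that proposition states.

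\emph{Part (ii).} First apply the block estimate of Proposition~\ref{prop:rough-capacity-bound}: $\capGT(A)\ge\sum_i\capGT(A_i)-o(1)$. Each cluster is then bounded separately.
\begin{itemize}
\item A bad cluster contributes at least $\delta_{d,k}$, by the first part of Proposition~\ref{prop:rough-capacity-bound}.
\item A clean cluster $A_i$ has $A_i\cap\mathcal B_{N,k}=\emptyset$. Proposition~\ref{prop:local-green-comparison} applied to $A_i$ alone (a single cluster) gives $\capGT(A_i)=\Cap_{\Td}(A_i^*)+o(1/\log N)$.
\item A clean singleton therefore contributes $\theta_d^{-1}$.
\item A clean cluster with $|A_i^*|\ge2$ contributes at least $\frac{2(d-1)}{d}\theta_d^{-1}$, by the cluster gap of Lemma~\ref{lem:tree-cluster-gap}.
\end{itemize}
Summing over at most $k$ clusters gives \eqref{eq:profile-capacity}. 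Uniformity holds because only finitely many profiles occur for fixed $k$ and every error term is uniform.

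\emph{Part (i).} Count tuples cluster by cluster.
\begin{itemize}
\item Clean singletons: at most $N$ choices each.
\item Each non-singleton cluster, clean or bad: choose one anchor vertex in at most $N$ ways. Every other member lies within distance $2(k-1)L_N$ of the anchor, so it has at most $(d-1)^{O(kL_N)}=N^{o(1)}$ choices.
\item Each bad cluster: it contains a vertex of $\mathcal B_{N,k}$. Anchor at that vertex, which leaves only $|\mathcal B_{N,k}|\le N^{o(1)}$ choices by Proposition~\ref{prop:diagonal-event}, and place the other members within distance $2(k-1)L_N$ of it.
\end{itemize}
The number of ways to partition the $k$ labels into clusters is bounded in terms of $k$ only. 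Multiplying the counts gives at most $N^{a_1+a_2+o(1)}$.

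For the good tuples, note that the only profile with $a_1+a_2=k$ is $(k,0,0)$: this needs $b=0$ and all clusters singletons, since $a_1+2a_2\le k$. Hence every non-good tuple has $a_1+a_2\le k-1$. Summing over the finitely many profiles, there are at most $N^{k-1+o(1)}$ non-good tuples. Subtracting from the $N(N-1)\cdots(N-k+1)=(1+o(1))N^k$ ordered distinct tuples gives $(1+o(1))N^k$ good tuples. Alternatively, the close-pair count from Proposition~\ref{prop:diagonal-event} together with $k|\mathcal B_{N,k}|N^{k-1}$ gives the same bound.

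The main subtlety is uniformity in (ii). Proposition~\ref{prop:local-green-comparison} is applied to sub-clusters rather than to the full target, so we must check that a clean cluster satisfies its hypotheses with $K=k$. It does: the hypothesis only requires avoiding $\mathcal B_{N,k}$ and having size at most $k$. We must also check that bounding bad clusters independently is legitimate. It is, because the block decomposition in Proposition~\ref{prop:rough-capacity-bound} holds for arbitrary clusters.
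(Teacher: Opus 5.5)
Your proposal is correct and follows the paper's proof essentially step by step. You use anchor counting for (i), the block estimate of Proposition~\ref{prop:rough-capacity-bound} with cluster-wise local comparison and the cluster gap for (ii), and Proposition~\ref{prop:local-green-comparison} with singleton tree capacities for (iii). The only minor difference is the good-tuple count: you derive it from (i) via $a_1+2a_2+b\le k$, which forces $a_1+a_2\le k-1$ for non-good tuples, whereas the paper subtracts $k|\mathcal B_{N,k}|N^{k-1}$ and the close-tuple count of Proposition~\ref{prop:diagonal-event}; both work.
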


\begin{proof}
(i) Choose one vertex (an anchor) in each clean cluster; every other member
lies in the ball of radius $2(k-1)L_N$ around its anchor, which has
$N^{o(1)}$ vertices.  For each bad cluster choose one vertex in
$\mathcal B_{N,k}$, which has $N^{o(1)}$ choices by
Proposition~\ref{prop:diagonal-event}, and again $N^{o(1)}$ choices for its
remaining members.  The number of good tuples is at least
$(N)_k-k|\mathcal B_{N,k}|N^{k-1}-N^{k-1+o(1)}=(1+o(1))N^k$ by
Proposition~\ref{prop:diagonal-event}.

(ii) By Proposition~\ref{prop:rough-capacity-bound},
$\capGT(A)\ge\sum_i\capGT(A_i)-o(1)$.  For a clean singleton,
Proposition~\ref{prop:local-green-comparison} and
Lemma~\ref{lem:tree-basic-capacities} give
$\capGT(A_i)=\theta_d^{-1}+o(1)$; for a clean non-singleton cluster,
Proposition~\ref{prop:local-green-comparison} and
Lemma~\ref{lem:tree-cluster-gap} give
$\capGT(A_i)\ge\frac{2(d-1)}d\theta_d^{-1}-o(1)$; for a bad cluster,
$\capGT(A_i)\ge\delta_{d,k}$ by Proposition~\ref{prop:rough-capacity-bound}.

(iii) The clusters of a good tuple are clean singletons, so
Proposition~\ref{prop:local-green-comparison} and
Lemma~\ref{lem:tree-basic-capacities} give the claim.
\end{proof}

\section{Late points and the stationary-start cover time}
\label{sec:proof-main}

We now prove Theorem~\ref{thm:rrg-cover-time} for the stationary walk.
The idea is the coupon-collector heuristic made rigorous: we show that the
number of vertices not yet visited at time $\theta_dN(\log N+s)$ is
asymptotically Poisson with mean $e^{-s}$.  Independence of the events
$\{H_v>t\}$ is never assumed.  Instead, we compute falling factorial
moments, which reduces the problem to avoidance probabilities of fixed
finite sets.  These are given by the finite-target theorem
(Section~\ref{sec:fvtl}), the local Green comparison
(Section~\ref{sec:rrg-estimates}) and the tree capacities
(Section~\ref{sec:tree-capacities}).

\subsection{Late points and factorial moments}
\label{sec:late-point-reduction}

Let $G=(V,E)$ be a finite graph with $|V|=N$ and let $(X_t)$ be simple
random walk on $G$ started from $\pi=\pi_G$.  For $s\in\mathbb R$ set
\begin{equation}
   t_N(s):=\big\lfloor\theta_dN(\log N+s)\big\rfloor,
   \qquad
   \mathcal U_N(s):=\{v\in V:H_v>t_N(s)\},
   \qquad
   Y_N(s):=|\mathcal U_N(s)|.
   \label{eq:late-point-count}
\end{equation}
Thus $\mathcal U_N(s)$ is the set of late points and
$\{T_{\rm cov}\le t_N(s)\}=\{Y_N(s)=0\}$.  For $k\ge1$ let
\begin{equation}
   V_{N,\neq}^k:=\{(v_1,\dots,v_k)\in V^k:v_i\ne v_j\text{ for }i\ne j\},
   \qquad
   A(\mathbf v):=\{v_1,\dots,v_k\}
   \quad\text{for }\mathbf v\in V_{N,\neq}^k.
\end{equation}
We write $(y)_k:=y(y-1)\cdots(y-k+1)$ for the falling factorial.

\begin{lemma}
\label{lem:factorial-moment-identity}
For every $k\ge1$ and $s\in\mathbb R$,
\begin{equation}
   \bbE_{\pi}[(Y_N(s))_k]
   =
   \sum_{\mathbf v\in V_{N,\neq}^k}
   \bbP_{\pi}\big(H_{A(\mathbf v)}>t_N(s)\big).
   \label{eq:factorial-moment-identity}
\end{equation}
\end{lemma}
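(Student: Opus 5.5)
The identity is the standard expansion of a falling factorial of a counting variable as a sum over ordered tuples of distinct points, followed by linearity of expectation. I will first establish a pathwise combinatorial identity and then take expectations.

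\emph{Pathwise identity.} Fix a realization of the walk and write $\mathcal U=\mathcal U_N(s)$, $Y=|\mathcal U|$. The number of ordered $k$-tuples of distinct elements of $\mathcal U$ is $Y(Y-1)\cdots(Y-k+1)=(Y)_k$. This holds also when $Y<k$, since then both sides vanish. Hence
\begin{equation*}
   (Y_N(s))_k=\sum_{\mathbf v\in V_{N,\neq}^k}\prod_{i=1}^k\one_{\{v_i\in\mathcal U_N(s)\}}.
\end{equation*}

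\emph{Identification of the indicator.} For a tuple $\mathbf v$ of distinct vertices, $v_i\in\mathcal U_N(s)$ for all $i$ means $H_{v_i}>t_N(s)$ for all $i$. This is equivalent to $\min_iH_{v_i}>t_N(s)$. Moreover $H_{A(\mathbf v)}=\min_{a\in A(\mathbf v)}H_a$ directly from the definition of the hitting time as the first time the walk lies in $A(\mathbf v)$. So the product of indicators equals $\one_{\{H_{A(\mathbf v)}>t_N(s)\}}$.

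\emph{Taking expectations.} Take $\bbE_\pi$ of the pathwise identity. The sum is finite, with at most $N^k$ terms, so linearity of expectation gives \eqref{eq:factorial-moment-identity}.

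There is no genuine obstacle here. The only points to check are the convention that $(y)_k=0$ for integers $0\le y<k$, and that $H_A$ is the minimum of the singleton hitting times, including time $0$. The latter holds because both definitions use $t\ge0$.
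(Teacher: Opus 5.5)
Your proof is correct and follows the paper's argument: you write $(Y_N(s))_k$ pathwise as the number of ordered $k$-tuples of distinct late points, identify each indicator with $\one_{\{H_{A(\mathbf v)}>t_N(s)\}}$, and take expectations of the finite sum. The two points you check explicitly, the case $Y<k$ and the identity $H_A=\min_{a\in A}H_a$, are the ones the paper's short proof uses implicitly.
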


\begin{proof}
$(Y_N(s))_k$ is the number of ordered $k$-tuples of distinct vertices of
$\mathcal U_N(s)$, and $\mathbf v\in V_{N,\neq}^k$ lies in
$\mathcal U_N(s)^k$ if and only if $H_{A(\mathbf v)}>t_N(s)$.  Hence
$(Y_N(s))_k=\sum_{\mathbf v\in V_{N,\neq}^k}\one_{\{H_{A(\mathbf v)}>t_N(s)\}}$,
and taking expectations of this finite sum gives the claim.
\end{proof}

The proof uses only that $H_v$ is the hitting time of $v$ by some process
on $V$; in particular, Lemma~\ref{lem:factorial-moment-identity} applies
verbatim to the lazy walk, to the continuous-time walk and to the union of
several independent walks considered in
Sections~\ref{sec:lazy-continuous}--\ref{sec:late-set-structure}.

Since the Poisson distribution is determined by its moments, convergence
$\bbE_\pi[(Y_N(s))_k]\to e^{-ks}$ for every fixed $k\ge1$ implies
$Y_N(s)\Rightarrow{\rm Poisson}(e^{-s})$.  The rest of this section
verifies this convergence on the typical graph event.

\subsection{Poisson convergence}

\begin{proposition}
\label{prop:poisson-rrg}
Let $G=G_N\in\mathcal E_N$, the diagonal event of
Proposition~\ref{prop:diagonal-event}.  For every fixed $s\in\mathbb R$ and
$k\ge1$,
\begin{equation}
   \bbE_{\pi}[(Y_N(s))_k]\longrightarrow e^{-ks}.
\end{equation}
Consequently $Y_N(s)\Rightarrow{\rm Poisson}(e^{-s})$ under stationary
start, and
\begin{equation}
   \bbP_{\pi}(T_{\rm cov}\le t_N(s))\longrightarrow e^{-e^{-s}}.
\end{equation}
\end{proposition}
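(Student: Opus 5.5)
We start from the factorial-moment identity of Lemma~\ref{lem:factorial-moment-identity} and split the sum over $\mathbf v\in V_{N,\neq}^k$ according to the cluster profile $(a_1,a_2,b)$ of Lemma~\ref{lem:cluster-profiles}. On $\mathcal E_N$ the graph satisfies Assumption~\ref{ass:spectral-gap} by (T1)--(T2). Since $t_N(s)\le C_0N\log N$ for a suitable $C_0=C_0(s)$, Theorem~\ref{thm:uniform-finite-target-fvtl} with $K=k$ gives, uniformly in $\mathbf v$,
\begin{equation}
   \bbP_\pi\big(H_{A(\mathbf v)}>t_N(s)\big)
   =\exp\Bigl\{-\theta_d(\log N+s)\capGT(A(\mathbf v))+o(1)\Bigr\}.
\end{equation}
Here we used $t_N(s)/N=\theta_d(\log N+s)+O(1/N)$ and the uniform bound $\capGT(A)\le k\,c_\rho^{-1}$ from Lemma~\ref{lem:uniform-green-invertibility}, so the rounding costs $o(1)$.

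For the main term we take the good tuples. By Lemma~\ref{lem:cluster-profiles}(iii), $\capGT(A(\mathbf v))=k\theta_d^{-1}+o(1/\log N)$. Multiplying by $\theta_d(\log N+s)$ gives exponent $-k\log N-ks+o(1)$, so each good tuple contributes $N^{-k}e^{-ks}(1+o(1))$. There are $(1+o(1))N^k$ good tuples by Lemma~\ref{lem:cluster-profiles}(i), so their total contribution is $e^{-ks}(1+o(1))$.

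The remaining profiles must give $o(1)$ in total, and this is the step requiring care, since the capacity lower bound in Lemma~\ref{lem:cluster-profiles}(ii) has only an additive $o(1)$ error, which costs a factor $N^{o(1)}$ after multiplication by $\theta_d\log N$. Fix a profile $(a_1,a_2,b)\ne(k,0,0)$. By (i) and (ii), its total contribution is at most
\begin{equation}
   N^{a_1+a_2+o(1)}\cdot N^{-a_1-a_2\frac{2(d-1)}{d}-b\theta_d\delta_{d,k}+o(1)}
   =N^{-a_2\frac{d-2}{d}-b\theta_d\delta_{d,k}+o(1)}.
\end{equation}
Because $k=a_1+(\text{sizes of other clusters})$, a profile other than $(k,0,0)$ has $a_2\ge1$ or $b\ge1$. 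Either way the exponent is at most $-\min\{\tfrac{d-2}{d},\theta_d\delta_{d,k}\}+o(1)<0$, so the cluster gap $2(d-1)/d>1$ and the bad-vertex sparsity absorb the $N^{o(1)}$ losses. The number of profiles is bounded in terms of $k$, so the sum over them is $o(1)$. We need the $o(1)$ terms in (i) and (ii) to be uniform over tuples of a given profile, which Lemma~\ref{lem:cluster-profiles} asserts.

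Combining the two parts gives $\bbE_\pi[(Y_N(s))_k]\to e^{-ks}$ for every $k$. The method of moments then gives $Y_N(s)\Rightarrow\mathrm{Poisson}(e^{-s})$, since the Poisson law is determined by its moments and convergent factorial moments yield convergent moments. Finally, $\bbP_\pi(T_{\rm cov}\le t_N(s))=\bbP_\pi(Y_N(s)=0)\to e^{-e^{-s}}$.
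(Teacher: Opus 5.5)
Your proposal is correct and follows the paper's proof essentially step for step. It uses the factorial-moment identity and splits the sum into good tuples, which have capacity $k\theta_d^{-1}+o(1/\log N)$ by Lemma~\ref{lem:cluster-profiles}(iii), and the remaining profiles with $a_2\ge1$ or $b\ge1$, whose total contribution is bounded by $N^{-a_2(d-2)/d-b\theta_d\delta_{d,k}+o(1)}$; the method of moments then finishes. Your explicit treatment of the rounding in $t_N(s)$, via the bound $\capGT(A)\le k\,c_\rho^{-1}$, is a small detail the paper leaves implicit.
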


In particular, since $\bbP(G_N\in\mathcal E_N)\to1$, this proves the
stationary-start version of Theorem~\ref{thm:rrg-cover-time} in probability
over $G_N$.

\begin{proof}
Fix $k\ge1$ and let $\mathcal G_{N,k}\subseteq V_{N,\neq}^k$ be the set of
good tuples in the sense of Lemma~\ref{lem:cluster-profiles}; by that
lemma, $|\mathcal G_{N,k}|=(1+o(1))N^k$.  By
Lemma~\ref{lem:factorial-moment-identity} it suffices to show that,
uniformly for $\mathbf v\in\mathcal G_{N,k}$,
\begin{equation}
   \bbP_{\pi}\big(H_{A(\mathbf v)}>t_N(s)\big)
   =(1+o(1))N^{-k}e^{-ks},
   \label{eq:good-tuple-asymptotics}
\end{equation}
and that
\begin{equation}
   \sum_{\mathbf v\in V_{N,\neq}^k\setminus\mathcal G_{N,k}}
   \bbP_{\pi}\big(H_{A(\mathbf v)}>t_N(s)\big)
   =o(1).
   \label{eq:bad-tuple-negligible}
\end{equation}

\emph{Good tuples.}  For $\mathbf v\in\mathcal G_{N,k}$,
Lemma~\ref{lem:cluster-profiles}\textup{(iii)} gives
$\capGT(A(\mathbf v))=k\theta_d^{-1}+o(1/\log N)$, so by
Theorem~\ref{thm:uniform-finite-target-fvtl}, uniformly over good tuples,
\begin{align}
   \bbP_\pi(H_{A(\mathbf v)}>t_N(s))
   &=\exp\left\{-\theta_d(\log N+s)(k\theta_d^{-1}+o(1/\log N))+o(1)\right\} \\
   &=(1+o(1))N^{-k}e^{-ks},
\end{align}
which is \eqref{eq:good-tuple-asymptotics}.

\emph{Bad tuples.}  A tuple $\mathbf v\notin\mathcal G_{N,k}$ has a profile
$(a_1,a_2,b)$ with $a_2\ge1$ or $b\ge1$.  By
Lemma~\ref{lem:cluster-profiles}\textup{(ii)} and
Theorem~\ref{thm:uniform-finite-target-fvtl}, each such tuple satisfies
\begin{equation}
   \bbP_\pi\big(H_{A(\mathbf v)}>t_N(s)\big)
   \le e^{-\theta_d(\log N+s)\left(a_1\theta_d^{-1}
      +a_2\frac{2(d-1)}{d}\theta_d^{-1}+b\delta_{d,k}-o(1)\right)}
   =N^{-a_1-a_2\frac{2(d-1)}{d}-b\theta_d\delta_{d,k}+o(1)};
   \label{eq:bad-tuple-bound}
\end{equation}
note that the $o(1)$ error in \eqref{eq:profile-capacity} costs only a
factor $N^{o(1)}$.  Multiplying by the number $N^{a_1+a_2+o(1)}$ of tuples
with this profile (Lemma~\ref{lem:cluster-profiles}\textup{(i)}) and
summing over the finitely many profiles,
\begin{equation}
   \sum_{\mathbf v\in V_{N,\neq}^k\setminus\mathcal G_{N,k}}
   \bbP_{\pi}\big(H_{A(\mathbf v)}>t_N(s)\big)
   \le\sum_{(a_1,a_2,b)}N^{-a_2(d-2)/d-b\theta_d\delta_{d,k}+o(1)}
   \longrightarrow0,
\end{equation}
since every profile in the sum has $a_2\ge1$ or $b\ge1$.  This proves
\eqref{eq:bad-tuple-negligible}.

Combining \eqref{eq:good-tuple-asymptotics}, \eqref{eq:bad-tuple-negligible},
$|\mathcal G_{N,k}|=(1+o(1))N^k$ and
Lemma~\ref{lem:factorial-moment-identity} gives
$\bbE_{\pi}[(Y_N(s))_k]\to e^{-ks}$.  The Poisson limit follows by the
method of moments, and the last assertion follows from
$\{T_{\rm cov}\le t_N(s)\}=\{Y_N(s)=0\}$.
\end{proof}

\section{From stationarity to deterministic starting points}
\label{sec:deterministic-start}

We now remove the assumption of a stationary start.  After
$\ell_N=O(\log N)$ steps the walk is close to stationary, and the vertices
visited before that time are too few to affect the late points.  The same
argument is used for several independent walks
(Section~\ref{sec:parallel-walks}) and for
the late-point sets (Section~\ref{sec:late-set-structure}).  We therefore
formulate it for a general Markov chain whose states occupy vertices.

\paragraph{Setting.}
Let $(Z_t)_{t\ge0}$ be a Markov chain on a finite set $\Omega$ with
transition matrix $P_\Omega$ and stationary distribution $\mu$, and let
$\psi$ assign to each $z\in\Omega$ a set $\psi(z)\subseteq V$ with
$|\psi(z)|\le q$.  For $v\in V$ and $t\ge0$ put
\begin{equation}
   H_v:=\inf\{t\ge0:v\in\psi(Z_t)\},
   \qquad
   R_t:=\bigcup_{j=0}^t\psi(Z_j),
   \qquad
   T_{\rm cov}:=\inf\{t\ge0:R_t=V\},
\end{equation}
and $\eta_\Omega(\ell):=\max_{z\in\Omega}\|P_\Omega^\ell(z,\cdot)-\mu\|_{\TV}$.
Simple random walk is the case $\Omega=V$, $\psi(z)=\{z\}$, $q=1$, $\mu=\pi$;
$q$ independent walks correspond to $\Omega=V^q$,
$\psi(z)=\{z_1,\dots,z_q\}$ and $\mu=\pi^{\otimes q}$.  The test-function
bound \eqref{eq:tv-test-function} holds for any finite Markov chain, with
$\eta_\Omega$ in place of $\eta_N$; its proof in Lemma~\ref{lem:mixing}
uses only the definition of total variation distance.

\begin{proposition}[Comparison with a stationary start]
\label{prop:deterministic-start-comparison}
Let $z\in\Omega$ and let $\ell,\tau\ge0$ be integers.
\begin{enumerate}[label=\textup{(\roman*)}]
\item There is a coupling of the chain $(Z_t)$ started at $z$ with a chain
$(\widetilde Z_t)$ started from $\mu$ such that
$\bbP\bigl(Z_{\ell+t}=\widetilde Z_t\text{ for all }t\ge0\bigr)
\ge1-\eta_\Omega(\ell)$.  Moreover, conditionally on
$(Z_0,\dots,Z_\ell)$ and $\widetilde Z_0$, the process $(\widetilde Z_t)$ is
a Markov chain started at $\widetilde Z_0$.
\item We have
\begin{equation}
   \bigl|\bbP_z(T_{\rm cov}\le\ell+\tau)-\bbP_\mu(T_{\rm cov}\le\tau)\bigr|
   \le\eta_\Omega(\ell)+q(\ell+1)\max_{y\in\Omega,\,v\in V}\bbP_y(H_v>\tau).
   \label{eq:deterministic-start-comparison}
\end{equation}
\end{enumerate}
\end{proposition}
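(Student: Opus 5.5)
For part (i) I will use the maximal coupling of the time-$\ell$ marginals. Let $\mathcal L_z:=P_\Omega^\ell(z,\cdot)$. There is a coupling $(\xi,\widetilde\xi)$ with $\xi\sim\mathcal L_z$, $\widetilde\xi\sim\mu$ and $\bbP(\xi\ne\widetilde\xi)=\|\mathcal L_z-\mu\|_{\TV}\le\eta_\Omega(\ell)$. The coupling of the two chains is built in three steps. First run $(Z_0,\dots,Z_\ell)$ from $z$. Then, given $Z_\ell=\xi$, sample $\widetilde Z_0=\widetilde\xi$ from the conditional law of $\widetilde\xi$ given $\xi$; this draw is independent of $Z_0,\dots,Z_{\ell-1}$ given $Z_\ell$. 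Finally, if $\widetilde Z_0=Z_\ell$, continue both chains with common transitions, so that $\widetilde Z_t=Z_{\ell+t}$ for all $t$. Otherwise run them independently from $Z_\ell$ and $\widetilde Z_0$, each by the Markov rule. By construction the marginal of $(Z_t)$ is the chain from $z$, and $(\widetilde Z_t)$ is a Markov chain started at $\widetilde Z_0\sim\mu$ whose future depends on the past only through $\widetilde Z_0$. This gives the conditional Markov property. On $\{\xi=\widetilde\xi\}$ the chains agree forever, which gives the stated bound.

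For part (ii), write $\mathrm{Cov}_{[a,b]}$ for the event that the sets $\psi(Z_j)$, $a\le j\le b$, cover $V$. Let $D$ be the event that the coupling fails, so $\bbP(D)\le\eta_\Omega(\ell)$. Off $D$, the event $\{T_{\rm cov}\le\ell+\tau\}$ for $Z$ contains $\widetilde{\mathrm{Cov}}_{[0,\tau]}$, the event that $\widetilde Z$ covers $V$ by time $\tau$. It follows that $\bbP_z(T_{\rm cov}\le\ell+\tau)\ge\bbP_\mu(T_{\rm cov}\le\tau)-\eta_\Omega(\ell)$.

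For the reverse inequality, suppose $Z$ covers by time $\ell+\tau$ but $\widetilde Z$ does not cover by time $\tau$, and the coupling succeeds. Then some vertex $v$ is not covered by $\widetilde Z_0,\dots,\widetilde Z_\tau$ but is covered by $Z_0,\dots,Z_\ell$. Such a $v$ must lie in $R_\ell=\bigcup_{j\le\ell}\psi(Z_j)$, which has at most $q(\ell+1)$ elements. Hence
\begin{equation*}
\bbP_z(T_{\rm cov}\le\ell+\tau)\le\bbP_\mu(T_{\rm cov}\le\tau)+\eta_\Omega(\ell)+\bbE\Bigl[\sum_{v\in R_\ell}\one\{\widetilde H_v>\tau\}\Bigr].
\end{equation*}
To bound the last term, condition on $(Z_0,\dots,Z_\ell)$ and $\widetilde Z_0$. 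The set $R_\ell$ is then fixed with $|R_\ell|\le q(\ell+1)$. By part (i), $(\widetilde Z_t)$ is conditionally a chain started at $\widetilde Z_0=y$, so each summand has conditional probability $\bbP_y(H_v>\tau)\le\max_{y,v}\bbP_y(H_v>\tau)$. Taking expectations gives the second error term in \eqref{eq:deterministic-start-comparison}.

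The main obstacle is this last step. The set $R_\ell$ is random and correlated with $\widetilde Z_0$, so a union bound over a fixed set is not available. That is exactly why part (i) requires the conditional Markov property, and why the bound uses the maximum over all starting states $y$ rather than the stationary law.
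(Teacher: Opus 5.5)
Your proof is correct. Part (i) is the paper's construction with the sampling order reversed. The paper draws the maximal coupling $(U,\widetilde Z_0)$ first and then the path from $z$ conditioned on $Z_\ell=U$; you draw the path first and then $\widetilde Z_0$ from the conditional law given $Z_\ell$. Both give the same joint law, and the conditional Markov property holds in both.

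For part (ii) you take a genuinely different route. You derive the comparison from the coupling in (i): you split on coupling success and bound the discrepancy by $\bbE\bigl[\sum_{v\in R_\ell}\one\{\widetilde H_v>\tau\}\bigr]$ using the conditional Markov property. The paper does not use the coupling for (ii) at all. It sets $f(y)=\bbP_y(T_{\rm cov}\le\tau)$ and applies the Markov property of the original chain at time $\ell$. This sandwiches $\bbP_z(S\cup R'_\tau=V\mid Z_0,\dots,Z_\ell)$ between $f(Z_\ell)$ and $f(Z_\ell)+\sum_{v\in S}\bbP_{Z_\ell}(H_v>\tau)$. It then needs only the test-function bound \eqref{eq:tv-test-function} for $f$.

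So the obstacle you flag, that $R_\ell$ is correlated with the state from which the continuation starts, is handled there simply by conditioning on the whole initial path. No coupling is needed, and the paper's argument for (ii) is shorter and uses strictly less.

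Your route proves something stronger pathwise. On coupling success, the unvisited set of $Z$ at time $\ell+\tau$ and that of $\widetilde Z$ at time $\tau$ differ only at vertices of $R_\ell$ missed by $\widetilde Z$. This is exactly the argument the paper later needs in Lemma~\ref{lem:late-set-start-transfer}, where the entire late set, not just the cover event, must be compared.
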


\begin{proof}
(i) Let $(U,\widetilde Z_0)$ be a maximal coupling of $P_\Omega^\ell(z,\cdot)$
and $\mu$, so that $\bbP(U\ne\widetilde Z_0)\le\eta_\Omega(\ell)$.  Given
$U$, sample $(Z_0,\dots,Z_\ell)$ from the law of the chain started at $z$
conditioned on $Z_\ell=U$.  If $U=\widetilde Z_0$, run a single chain
$(Y_t)$ from $U$ and set $Z_{\ell+t}=\widetilde Z_t=Y_t$; otherwise continue
both from their current states with independent chains.  By the Markov
property both processes have the correct laws, and the last assertion
holds by construction.

(ii) Let $S:=R_\ell$, so that $|S|\le q(\ell+1)$, and let $R'_\tau$ be the
range of $(Z_{\ell+t})_{0\le t\le\tau}$.  Then
$\{T_{\rm cov}\le\ell+\tau\}=\{S\cup R'_\tau=V\}$.  Put
$f(y):=\bbP_y(T_{\rm cov}\le\tau)$.  By the Markov property at time $\ell$,
\begin{equation}
   f(Z_\ell)
   \le\bbP_z\bigl(S\cup R'_\tau=V\,\big|\,Z_0,\dots,Z_\ell\bigr)
   \le f(Z_\ell)+\sum_{v\in S}\bbP_{Z_\ell}(H_v>\tau),
\end{equation}
because $S\cup R'_\tau=V$ can hold while $R'_\tau\ne V$ only if the
continuation misses a vertex of $S$.  Taking expectations and using
\eqref{eq:tv-test-function} for $f$, which gives
$|\bbE_z[f(Z_\ell)]-\bbP_\mu(T_{\rm cov}\le\tau)|\le\eta_\Omega(\ell)$,
proves \eqref{eq:deterministic-start-comparison}.
\end{proof}

The comparison shifts the time by $\ell$.  The following lemma shows
that shifts of size $o(N)$ are negligible on the fluctuation scale $N$.

\begin{lemma}[Time shifts]
\label{lem:time-shift}
Let $a>0$, $u_N(s):=\lfloor aN(\log N+s)\rfloor$, and let $F$ be a
continuous distribution function.  Let $\mathcal T_N$ be random variables
(possibly defined on different probability spaces) such that
$\bbP(\mathcal T_N\le u_N(s))\to F(s)$ for every $s\in\mathbb R$.  If
$\ell_N\ge0$ are integers with $\ell_N=o(N)$, then
$\bbP(\mathcal T_N\le u_N(s)-\ell_N)\to F(s)$ for every $s\in\mathbb R$.
\end{lemma}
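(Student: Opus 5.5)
The plan is a sandwich argument using monotonicity of distribution functions in the threshold, together with continuity of $F$. Fix $s\in\mathbb R$ and $\varepsilon>0$. Since $\ell_N=o(N)$ and $aN\varepsilon\to\infty$, for all large $N$ we have
\begin{equation}
   u_N(s-\varepsilon)\le u_N(s)-\ell_N\le u_N(s).
\end{equation}
For the left inequality, note that $u_N(s)-u_N(s-\varepsilon)\ge aN\varepsilon-1$, because two floors differ by at least the difference of their arguments minus one. Hence the inequality holds once $\ell_N\le aN\varepsilon-1$, which is true eventually. The right inequality is trivial since $\ell_N\ge0$.

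By monotonicity of $x\mapsto\bbP(\mathcal T_N\le x)$, for large $N$,
\begin{equation}
   \bbP\bigl(\mathcal T_N\le u_N(s-\varepsilon)\bigr)
   \le\bbP\bigl(\mathcal T_N\le u_N(s)-\ell_N\bigr)
   \le\bbP\bigl(\mathcal T_N\le u_N(s)\bigr).
\end{equation}
The hypothesis, applied at $s-\varepsilon$ and at $s$, gives
\begin{equation}
   F(s-\varepsilon)\le\liminf_N\bbP(\mathcal T_N\le u_N(s)-\ell_N)
   \le\limsup_N\bbP(\mathcal T_N\le u_N(s)-\ell_N)\le F(s).
\end{equation}
Letting $\varepsilon\downarrow0$ and using continuity of $F$ at $s$ (left-continuity suffices) yields the claim.

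The only step that needs any care is the floor bookkeeping in the first display, ensuring that the shift by $\ell_N$ is eventually dominated by the gap $aN\varepsilon$. This is routine. No other result from the paper is needed, and the fact that the $\mathcal T_N$ live on different probability spaces plays no role, since only their distribution functions enter.
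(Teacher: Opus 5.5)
Your proposal is correct and follows essentially the same route as the paper's proof: the sandwich $u_N(s-\varepsilon)\le u_N(s)-\ell_N\le u_N(s)$, justified by $u_N(s)-u_N(s-\varepsilon)\ge aN\varepsilon-1\ge\ell_N$ for large $N$, followed by monotonicity, the hypothesis at $s-\varepsilon$ and $s$, and continuity of $F$ as $\varepsilon\downarrow0$. Your remark that only (left-)continuity of $F$ at $s$ is actually used is accurate.
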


\begin{proof}
Fix $\varepsilon>0$.  Since $u_N(s)-u_N(s-\varepsilon)\ge aN\varepsilon-1
\ge\ell_N$ for all large $N$, we have
$u_N(s-\varepsilon)\le u_N(s)-\ell_N\le u_N(s)$, hence by monotonicity
\begin{equation}
   F(s-\varepsilon)
   \le\liminf_{N\to\infty}\bbP(\mathcal T_N\le u_N(s)-\ell_N)
   \le\limsup_{N\to\infty}\bbP(\mathcal T_N\le u_N(s)-\ell_N)
   \le F(s).
\end{equation}
Let $\varepsilon\downarrow0$ and use continuity of $F$.
\end{proof}

For simple random walk on $G\in\mathcal E_N$ we take
\begin{equation}
   \ell_N:=\Bigl\lceil\frac{2\log N}{\log(1/\rho)}\Bigr\rceil,
   \qquad\text{so that}\qquad
   \eta_N(\ell_N)\le\tfrac12N^{-3/2}
   \label{eq:ell-N}
\end{equation}
by \eqref{eq:tv-mixing}.  For fixed $s\in\mathbb R$ write
$\tau_N:=t_N(s)-\ell_N$.

\begin{lemma}[Singletons from arbitrary starting points]
\label{lem:arbitrary-start-singletons}
Let $G\in\mathcal E_N$ and fix $s\in\mathbb R$.  There is $c>0$ such that,
for all large $N$,
\begin{equation}
   \max_{y,v\in V_N}\bbP_y(H_v>\tau_N)\le N^{-c}.
\end{equation}
\end{lemma}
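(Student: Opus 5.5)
The plan is to reduce to a stationary start by one extra mixing step, and then use the singleton first-visit estimate together with the uniform lower bound on capacities.

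Fix $y,v\in V_N$ and write $\tau_N=\ell_N+\tau'_N$ with $\tau'_N:=\tau_N-\ell_N=t_N(s)-2\ell_N$. By the Markov property at time $\ell_N$,
\begin{equation}
   \bbP_y(H_v>\tau_N)\le\bbE_y\bigl[f(X_{\ell_N})\bigr],
   \qquad f(x):=\bbP_x(H_v>\tau'_N)\in[0,1].
\end{equation}
The inequality holds because $\{H_v>\tau_N\}$ is contained in the event that the walk after time $\ell_N$ avoids $v$ for $\tau'_N$ further steps. By the test-function bound \eqref{eq:tv-test-function} and \eqref{eq:ell-N},
\begin{equation}
   \bbE_y[f(X_{\ell_N})]\le\bbP_\pi(H_v>\tau'_N)+\eta_N(\ell_N)\le\bbP_\pi(H_v>\tau'_N)+\tfrac12N^{-3/2}.
\end{equation}
This reduces the claim to a stationary-start bound that is uniform in $v$.

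For the stationary term we apply Theorem~\ref{thm:uniform-finite-target-fvtl} with $K=1$ and, say, $C_0=2\theta_d$. This is legitimate since $0\le\tau'_N\le C_0N\log N$ for large $N$, and since $\tau'_N=\theta_dN(\log N+s)-O(\log N)\ge\tfrac12\theta_dN\log N$ eventually. Because $G\in\mathcal E_N$ satisfies Assumption~\ref{ass:spectral-gap} by (T2), we get
\begin{equation}
   \bbP_\pi(H_v>\tau'_N)\le\exp\Bigl\{-\frac{\tau'_N}{N}\capGT(\{v\})+o(1)\Bigr\}.
\end{equation}
We do not need the tree comparison here, which matters because $v$ might lie in $\mathcal B_{N,1}$. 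Instead we use the crude bound of Proposition~\ref{prop:rough-capacity-bound}, $\capGT(\{v\})=1/g_T^G(v,v)\ge1/M_\rho$, which holds for every vertex directly from \eqref{eq:green-entry-bound}. This gives $\bbP_\pi(H_v>\tau'_N)\le e^{o(1)}N^{-\theta_d/(2M_\rho)}$.

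Combining the two displays, any $c<\min\{\theta_d/(2M_\rho),3/2\}$ works for all large $N$, and both bounds are uniform in $y$ and $v$. The only points needing care are that the time horizon $\tau'_N$ lies in the range covered by Theorem~\ref{thm:uniform-finite-target-fvtl}, and that the singleton capacity is bounded below uniformly, including at locally bad vertices. Both follow from results already stated, so no step is a genuine obstacle.
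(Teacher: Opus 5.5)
Your proposal is correct and follows the paper's proof essentially verbatim: you apply the Markov property at time $\ell_N$ with the test-function bound \eqref{eq:tv-test-function}, and then Theorem~\ref{thm:uniform-finite-target-fvtl} with the crude singleton bound $\capGT(\{v\})\ge\delta_{d,1}=1/M_\rho$ from Proposition~\ref{prop:rough-capacity-bound}. The only difference is cosmetic: you use $\tau_N-\ell_N\ge\frac12\theta_dN\log N$ and get the exponent $\theta_d/(2M_\rho)$, whereas the paper uses $(\tau_N-\ell_N)/N=\theta_d\log N+O(1)$ to allow any $c<\min\{\theta_d\delta_{d,1},3/2\}$; either suffices.
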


\begin{proof}
By the Markov property at time $\ell_N$ and \eqref{eq:tv-test-function},
$\bbP_y(H_v>\tau_N)\le\bbP_\pi(H_v>\tau_N-\ell_N)+\eta_N(\ell_N)$.
Theorem~\ref{thm:uniform-finite-target-fvtl} and
Proposition~\ref{prop:rough-capacity-bound} give
$\bbP_\pi(H_v>\tau_N-\ell_N)\le\exp\{-\delta_{d,1}(\tau_N-\ell_N)/N+o(1)\}
=N^{-\theta_d\delta_{d,1}+o(1)}$.  Together with \eqref{eq:ell-N}, the claim
holds with any $c<\min\{\theta_d\delta_{d,1},3/2\}$.
\end{proof}

\begin{proof}[Proof of Theorem~\ref{thm:rrg-cover-time}, deterministic start]
Let $G\in\mathcal E_N$ and $x\in V_N$.
Proposition~\ref{prop:deterministic-start-comparison}\textup{(ii)} with
$\Omega=V_N$, $q=1$, $\ell=\ell_N$ and $\tau=\tau_N$, together with
\eqref{eq:ell-N} and Lemma~\ref{lem:arbitrary-start-singletons}, gives
\begin{equation}
   \max_{x\in V_N}\bigl|\bbP_x(T_{\rm cov}\le t_N(s))
      -\bbP_\pi(T_{\rm cov}\le t_N(s)-\ell_N)\bigr|
   \le\tfrac12N^{-3/2}+(\ell_N+1)N^{-c}\longrightarrow0.
\end{equation}
By Proposition~\ref{prop:poisson-rrg} and Lemma~\ref{lem:time-shift} with
$a=\theta_d$ and $F(s)=e^{-e^{-s}}$,
$\bbP_\pi(T_{\rm cov}\le t_N(s)-\ell_N)\to e^{-e^{-s}}$.  Hence
\begin{equation}
   \max_{x\in V_N}\left|
   \bbP_x(T_{\rm cov}\le t_N(s))-e^{-e^{-s}}
   \right|\longrightarrow0
\end{equation}
for every $G\in\mathcal E_N$, and since $\bbP(G_N\in\mathcal E_N)\to1$ this
proves Theorem~\ref{thm:rrg-cover-time}.
\end{proof}

\section{Lazy and continuous-time walks}
\label{sec:lazy-continuous}

The cover-time law transfers to lazy and continuous-time walks by an
independent change of clock.  The clock fluctuations are negligible on
the fluctuation scale $N$.

Fix $p\in(0,1]$ and let
\begin{equation}
   P_{G,p}:=(1-p)I+pP_G.
\end{equation}
Thus the walk moves to a uniformly chosen neighbor with probability $p$
and otherwise stays put.  Denote its cover time by $T_{\rm cov}^{(p)}$ and
put
\begin{equation}
   t_{N,p}(s):=
   \left\lfloor\frac{\theta_d}{p}N(\log N+s)\right\rfloor.
\end{equation}
Let $\widehat X$ be the continuous-time walk with generator $P_G-I$
(unit-rate exponential holding times), and let $\widehat T_{\rm cov}$ be
its cover time.

\begin{theorem}[Lazy and continuous-time cover times]
\label{thm:lazy-continuous}
On the same graph events $\mathcal E_N$ as in
Proposition~\ref{prop:diagonal-event}, for every fixed $p\in(0,1]$ and
$s\in\mathbb R$,
\begin{equation}
   \sup_{x\in V_N}\left|
   \bbP_x^{G,p}\bigl(T_{\rm cov}^{(p)}\le t_{N,p}(s)\bigr)
   -e^{-e^{-s}}\right|\longrightarrow0.
\end{equation}
Moreover,
\begin{equation}
   \sup_{x\in V_N}\left|
   \bbP_x^G\bigl(\widehat T_{\rm cov}
      \le\theta_dN(\log N+s)\bigr)
   -e^{-e^{-s}}\right|\longrightarrow0.
\end{equation}
Equivalently, the two centered and scaled cover times converge to a standard
Gumbel random variable, with scales $\theta_dN/p$ and $\theta_dN$,
respectively.
\end{theorem}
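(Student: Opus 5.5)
We reduce both statements to Theorem~\ref{thm:rrg-cover-time}, or more precisely to its quantitative form on $\mathcal E_N$, by an independent time change.  The lazy walk started at $x$ can be realized as $X_{J_n}$, where $(X_t)$ is simple random walk from $x$ and $J_n=\sum_{i\le n}\xi_i$ counts the moves among the first $n$ steps, with $\xi_i$ i.i.d.\ Bernoulli$(p)$ and independent of $X$.  Holding steps visit no new vertices, so
\[
T_{\rm cov}^{(p)}=\inf\{n: J_n\ge T_{\rm cov}\}.
\]
Hence $T_{\rm cov}^{(p)}\le n$ if and only if $T_{\rm cov}\le J_n$.

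Similarly, $\widehat X_u=X_{\Pi(u)}$ with $\Pi$ a unit-rate Poisson process independent of $X$.  This gives $\widehat T_{\rm cov}\le u$ if and only if $T_{\rm cov}\le\Pi(u)$.  If one prefers to keep the jump count as in the lazy case, it is cleaner to write $\widehat T_{\rm cov}=\sum_{i\le T_{\rm cov}}E_i$ with i.i.d.\ Exp$(1)$ variables $E_i$.

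For the lazy walk, fix $s$ and $\varepsilon>0$, and put $n=t_{N,p}(s)$.  Then $pn=\theta_dN(\log N+s)+O(1)$.  By Chebyshev, $|J_n-pn|\le N^{3/4}$ with probability $1-O(N^{-1/2}\log N)$, uniformly in the graph and the start.  On this event,
\[
t_N(s)-N^{3/4}-1\le J_n\le t_N(s)+N^{3/4}+1 .
\]
By independence of $J$ and $X$,
\[
\bbP_x\bigl(T_{\rm cov}\le t_N(s)-N^{3/4}-1\bigr)-o(1)\le \bbP_x^{G,p}\bigl(T^{(p)}_{\rm cov}\le n\bigr)\le \bbP_x\bigl(T_{\rm cov}\le t_N(s)+N^{3/4}+1\bigr)+o(1).
\]
The shifts $N^{3/4}+1$ are $o(N)$.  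We therefore need a uniform-in-$x$ version of Lemma~\ref{lem:time-shift}.  The proof of Theorem~\ref{thm:rrg-cover-time} on $\mathcal E_N$ gives $\max_x|\bbP_x(T_{\rm cov}\le t_N(s))-e^{-e^{-s}}|\to0$ for every fixed $s$.  The sandwich $t_N(s-\varepsilon)\le t_N(s)\pm o(N)\le t_N(s+\varepsilon)$, monotonicity, and continuity of $F(s)=e^{-e^{-s}}$ then give the uniform limit exactly as in that lemma.

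For the continuous-time walk, the same sandwich applies with $u=\theta_dN(\log N+s)$ and $\Pi(u)$ in place of $J_n$.  Since $\mathrm{Var}\,\Pi(u)=u=O(N\log N)$, we again have $|\Pi(u)-u|\le N^{3/4}$ with probability $1-o(1)$.  This completes the transfer.

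The main obstacle is only bookkeeping.  The earlier argument must be seen to yield a uniform-in-$x$ statement for all fixed $s$ on the deterministic event $\mathcal E_N$, so that the monotone sandwich works with $\sup_x$ inside.  This holds because the proof of Theorem~\ref{thm:rrg-cover-time} was carried out for every $G\in\mathcal E_N$ with $\max_x$ explicit.  We also need the clock concentration to be independent of $X$, which holds by construction.  Choosing $p=1$ recovers Theorem~\ref{thm:rrg-cover-time}.
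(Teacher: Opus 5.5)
Your argument is correct. It follows the same overall scheme as the paper: an independent clock, concentration of that clock, a monotone sandwich between $s\pm\varepsilon$ using the uniform-in-$x$ Gumbel law on $\mathcal E_N$, and continuity of $e^{-e^{-s}}$. The difference is that you run the time change in the dual direction.

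The paper writes the new cover time as a random sum $S_{T_{\rm cov}}$ of i.i.d.\ holding times, evaluated at the random index $T_{\rm cov}$. To control $S_{T_{\rm cov}}-aT_{\rm cov}$, it first extracts from Theorem~\ref{thm:rrg-cover-time} the a priori bound $T_{\rm cov}=O_{\bbP}(N\log N)$, uniformly in the start, and then applies Lemma~\ref{lem:clock-concentration}.

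You instead evaluate the clock at a deterministic time, through the exact identities $\{T^{(p)}_{\rm cov}\le n\}=\{T_{\rm cov}\le J_n\}$ and $\{\widehat T_{\rm cov}\le u\}=\{T_{\rm cov}\le \Pi(u)\}$. A single Chebyshev bound for a binomial or Poisson variable at a fixed time then suffices, and no tightness of $T_{\rm cov}$ is needed. This is slightly more economical.

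The paper's formulation buys a pathwise comparison $\mathcal C_N-aT_{\rm cov}=o_{\bbP}(N)$ that holds verbatim for any i.i.d.\ finite-variance holding-time law. It also gives the ``equivalently'' clause about the scaled cover times directly. Your version would need concentration of the corresponding renewal counting process for general clocks, which is standard but an extra step.
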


\begin{lemma}[Clock concentration]
\label{lem:clock-concentration}
Let $W_1,W_2,\ldots$ be independent identically distributed nonnegative
random variables with mean $a$ and finite variance $\sigma^2$, whose common
law does not depend on $N$.  Write $S_0:=0$ and
$S_m:=\sum_{j=1}^mW_j$ for $m\ge1$.  Let $T_N$ be a nonnegative
integer-valued random variable independent of the $W_j$.  If
$T_N=O_{\bbP}(N\log N)$, uniformly over a family of laws for $T_N$, then
\begin{equation}
   S_{T_N}-aT_N=o_{\bbP}(N)
\end{equation}
uniformly over that family.
\end{lemma}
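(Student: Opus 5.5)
The plan is to condition on $T_N$ and use independence, so that $S_{T_N}-aT_N$ given $T_N=m$ has the law of $S_m-am$. That is a centered sum of $m$ i.i.d.\ terms with variance $m\sigma^2$. Fix $\varepsilon>0$ and $\delta>0$. By the uniform tightness hypothesis $T_N=O_{\bbP}(N\log N)$, choose $C<\infty$ such that $\bbP(T_N>CN\log N)\le\delta$ for all $N$ and all laws in the family. We then split according to whether $T_N\le CN\log N$.

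On the event $\{T_N\le CN\log N\}$, independence and Chebyshev's inequality give
\begin{equation}
   \bbP\bigl(|S_{T_N}-aT_N|>\varepsilon N,\ T_N\le CN\log N\bigr)
   =\sum_{m\le CN\log N}\bbP(T_N=m)\,\bbP\bigl(|S_m-am|>\varepsilon N\bigr)
   \le\frac{\sigma^2CN\log N}{\varepsilon^2N^2}.
\end{equation}
The right-hand side tends to $0$ uniformly over the family, since it does not depend on the law of $T_N$. Adding the contribution $\delta$ from $\{T_N>CN\log N\}$ and letting $N\to\infty$ and then $\delta\downarrow0$ shows that $\sup_{\text{family}}\bbP(|S_{T_N}-aT_N|>\varepsilon N)\to0$ for every $\varepsilon>0$. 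This is the claim.

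There is no real obstacle. The one point to check is that the conditioning step is legitimate. Independence of $T_N$ from $(W_j)$ means that the conditional law of $S_{T_N}$ given $T_N=m$ is the law of $S_m$. Since the law of the $W_j$ does not depend on $N$, the Chebyshev constant $\sigma^2$ is uniform. The margin is comfortable: the standard deviation is of order $\sqrt{N\log N}=o(N)$. Only a second moment is needed, so the argument would go through even with a maximal inequality in place of Chebyshev. That stronger form is not required, because $T_N$ is independent of the clock.
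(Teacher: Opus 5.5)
Your proposal is correct and is essentially the paper's proof. You truncate at $T_N\le CN\log N$ using uniform tightness, and on that event you use independence plus Chebyshev to get the law-independent bound $\sigma^2CN\log N/(\varepsilon^2N^2)\to0$, then send $N\to\infty$ before $\delta\downarrow0$ (the paper's $M\to\infty$). The only cosmetic point is that the $O_{\bbP}$ hypothesis gives $\bbP(T_N>CN\log N)\le\delta$ only for all sufficiently large $N$, not all $N$; this suffices because you let $N\to\infty$ anyway.
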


\begin{proof}
For fixed $M,\varepsilon>0$, set $L_N':=\lceil MN\log N\rceil$.
Independence and Chebyshev's inequality give, uniformly over the family,
\begin{align}
   \bbP\bigl(|S_{T_N}-aT_N|>\varepsilon N\bigr)
   &\le \bbP(T_N>L_N')
      +\sup_{0\le m\le L_N'}\bbP(|S_m-am|>\varepsilon N)\\
   &\le \bbP(T_N>L_N')+\frac{\sigma^2L_N'}{\varepsilon^2N^2}.
\end{align}
First let $N\to\infty$, and then let $M\to\infty$, using uniform
tightness of $T_N/(N\log N)$.
\end{proof}

\begin{proof}[Proof of Theorem~\ref{thm:lazy-continuous}]
Work on $G\in\mathcal E_N$.  Theorem~\ref{thm:rrg-cover-time} implies
$T_{\rm cov}=O_{\bbP}(N\log N)$ uniformly in the starting vertex.
Indeed, for any $M>\theta_d$ and fixed $R$, eventually
$MN\log N\ge\theta_dN(\log N+R)$, so the uniform Gumbel law gives
\begin{equation}
   \limsup_{N\to\infty}\sup_x
   \bbP_x^G(T_{\rm cov}>MN\log N)
   \le 1-e^{-e^{-R}}.
\end{equation}
Letting $R\to\infty$ proves the required bound.

Construct the lazy walk from its simple-random-walk jump chain by inserting
independent geometric waiting times $W_j$ on $\{1,2,\ldots\}$ with success
probability $p$.  These clocks have mean $a=1/p$ and variance
$(1-p)/p^2$, and
\begin{equation}
   T_{\rm cov}^{(p)}=S_{T_{\rm cov}}.
\end{equation}
For $p=1$ the waiting times equal one.  Similarly, independent rate-one
exponential waiting times, with mean $a=1$ and variance one, give
\begin{equation}
   \widehat T_{\rm cov}=S_{T_{\rm cov}}.
\end{equation}
In either construction, write $\mathcal C_N:=S_{T_{\rm cov}}$ for the
resulting cover time.  Lemma~\ref{lem:clock-concentration} gives
$\mathcal C_N-aT_{\rm cov}=o_{\bbP}(N)$ uniformly over starting vertices.
Thus for every fixed $\varepsilon>0$ and $s\in\mathbb R$,
\begin{align}
   \bbP_x^G\bigl(T_{\rm cov}\le
       \theta_dN(\log N+s-\varepsilon)\bigr)-o(1)
   &\le\bbP_x\bigl(\mathcal C_N\le
       a\theta_dN(\log N+s)\bigr)\\
   &\le\bbP_x^G\bigl(T_{\rm cov}\le
       \theta_dN(\log N+s+\varepsilon)\bigr)+o(1),
\end{align}
where the errors are uniform in $x$.  Apply
Theorem~\ref{thm:rrg-cover-time} at $s-\varepsilon$ and $s+\varepsilon$,
then let $\varepsilon\downarrow0$.  Continuity of the Gumbel distribution
proves both assertions.  For the lazy walk the real threshold may be
replaced by its integer part because $T_{\rm cov}^{(p)}$ is integer-valued.
\end{proof}

\section{A fixed number of independent parallel walks}
\label{sec:parallel-walks}

Fix an integer $q\ge1$.  Let
$X^{(1)},\ldots,X^{(q)}$ be independent discrete-time simple random walks on
the same graph, all taking one step per round, and define
\begin{equation}
   T_{\rm cov}^{[q]}:=\inf\left\{t\ge0:
      \bigcup_{i=1}^q\{X_0^{(i)},\ldots,X_t^{(i)}\}=V_N\right\}.
\end{equation}
Set
\begin{equation}
   t_{N,q}(s):=
   \left\lfloor\frac{\theta_d}{q}N(\log N+s)\right\rfloor.
\end{equation}
The symbol $q$ here is the number of walkers and is unrelated to the
constants $q_{d,k}$ of Section~\ref{sec:late-set-structure}.

\begin{theorem}[Second-order law for $q$ parallel walks]
\label{thm:parallel-walks}
For every fixed integer $q\ge1$, on the same graph events $\mathcal E_N$ as
above, and for every fixed $s\in\mathbb R$,
\begin{equation}
   \sup_{(x_1,\ldots,x_q)\in V_N^q}\left|
   \bbP_{x_1,\ldots,x_q}^{G}
      \bigl(T_{\rm cov}^{[q]}\le t_{N,q}(s)\bigr)
   -e^{-e^{-s}}\right|\longrightarrow0.
\end{equation}
Equivalently,
\begin{equation}
   \frac{T_{\rm cov}^{[q]}-(\theta_d/q)N\log N}
        {(\theta_d/q)N}
   \Rightarrow\Lambda.
\end{equation}
This refines, at fixed $q$, the first-order $q$-fold speed-up established in
\cite{CooperFriezeRadzik2010}.
\end{theorem}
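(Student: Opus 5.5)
We follow the two-step scheme of Sections~\ref{sec:proof-main}--\ref{sec:deterministic-start}: first a Poisson limit for late points under the stationary start $\pi^{\otimes q}$, then transfer to deterministic starts via Proposition~\ref{prop:deterministic-start-comparison}. Throughout we work on $G\in\mathcal E_N$. Let $Y_N^{[q]}(s)$ be the number of vertices not visited by any of the $q$ walks up to time $t_{N,q}(s)$. By Lemma~\ref{lem:factorial-moment-identity}, which applies verbatim,
\begin{equation*}
   \bbE_{\pi^{\otimes q}}\bigl[(Y_N^{[q]}(s))_k\bigr]
   =\sum_{\mathbf v\in V_{N,\neq}^k}\bbP_\pi\bigl(H_{A(\mathbf v)}>t_{N,q}(s)\bigr)^q .
\end{equation*}
The right-hand side holds because the walks are independent and each starts from $\pi$, so each must avoid $A(\mathbf v)$ separately.

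The key observation is that Theorem~\ref{thm:uniform-finite-target-fvtl} applies with $C_0=\theta_d$ (say, $C_0=2\theta_d$ for large $N$). It gives
\begin{equation*}
   \bbP_\pi(H_A>t_{N,q}(s))^q
   =\exp\Bigl\{-q\,\frac{t_{N,q}(s)}{N}\capGT(A)+o(1)\Bigr\}
   =\exp\bigl\{-\theta_d(\log N+s)\capGT(A)+o(1)\bigr\},
\end{equation*}
since $q$ is fixed and the $o(1)$ error is multiplied by $q$. This is exactly the single-walk exponent at time $t_N(s)$. The proof of Proposition~\ref{prop:poisson-rrg} therefore goes through unchanged. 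Good tuples contribute $(1+o(1))N^{-k}e^{-ks}$ each by Lemma~\ref{lem:cluster-profiles}(iii). Bad profiles contribute at most $N^{-a_2(d-2)/d-b\theta_d\delta_{d,k}+o(1)}$ by Lemma~\ref{lem:cluster-profiles}(i),(ii). Hence the factorial moments converge to $e^{-ks}$, and $Y_N^{[q]}(s)\Rightarrow\mathrm{Poisson}(e^{-s})$. It follows that $\bbP_{\pi^{\otimes q}}(T^{[q]}_{\rm cov}\le t_{N,q}(s))\to e^{-e^{-s}}$.

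For deterministic starts, apply Proposition~\ref{prop:deterministic-start-comparison}(ii) with $\Omega=V_N^q$, $\psi(z)=\{z_1,\dots,z_q\}$ and $\mu=\pi^{\otimes q}$. Take $\ell=\ell_N$ as in \eqref{eq:ell-N} and $\tau=t_{N,q}(s)-\ell_N$. Two error terms must be controlled.

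\begin{enumerate}[label=\textup{(\roman*)}]
\item \emph{Mixing of the product chain.} The product chain has total-variation distance bounded by a sum over coordinates, so $\eta_\Omega(\ell_N)\le q\,\eta_N(\ell_N)\le \tfrac q2N^{-3/2}$.
\item \emph{Missed vertices.} For $y\in V_N^q$ and $v\in V_N$, the event $\{H_v>\tau\}$ requires walk $1$ alone to miss $v$. Therefore
\begin{equation*}
   \bbP_y(H_v>\tau)\le\max_{y_1}\bbP_{y_1}(H_v>\tau).
\end{equation*}
Arguing exactly as in Lemma~\ref{lem:arbitrary-start-singletons}, this is at most $N^{-\theta_d\delta_{d,1}/q+o(1)}+N^{-3/2}\le N^{-c}$ for some $c>0$, because $\tau\sim(\theta_d/q)N\log N$.
\end{enumerate}

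Hence the comparison error is $O(N^{-3/2})+q(\ell_N+1)N^{-c}\to0$, uniformly in the starting tuple. Finally, Lemma~\ref{lem:time-shift} with $a=\theta_d/q$ removes the shift $\ell_N=O(\log N)$.

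The main point to check is the exponent bookkeeping in the first step. It must be verified that raising to the power $q$ preserves both the $o(1/\log N)$ capacity precision and the $o(1)$ error from Theorem~\ref{thm:uniform-finite-target-fvtl}. It must also be verified that the bad-tuple exponents $a_2(d-2)/d$ and $b\theta_d\delta_{d,k}$ are unchanged, which holds since $q\cdot t_{N,q}(s)/N=\theta_d(\log N+s)+O(q/N)$. Everything else is a direct reuse of the single-walk argument.
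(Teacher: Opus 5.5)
Your proposal is correct and follows the paper's proof essentially step for step. Both arguments use the factorial-moment identity with the $q$-th power coming from independence, the cancellation $q\,t_{N,q}(s)/N=\theta_d(\log N+s)+O(1/N)$ for good and bad profiles, the product-chain comparison with $\eta_\Omega\le q\,\eta_N$, the bound on missed vertices through the first walker alone, and the time-shift lemma with $a=\theta_d/q$. The only difference is cosmetic: you reuse the single-walk $\ell_N$, whose mixing error is $\tfrac12N^{-3/2}$, where the paper takes separate logarithmic times with mixing error $N^{-10}$.
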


\begin{proof}[Proof from stationary starting points]
Let $Y_N^{[q]}(s)$ be the number of vertices unvisited by every one of the
$q$ walks at time $t_{N,q}(s)$, and let $H_A^{(i)}$ denote the hitting time
of $A$ by walker $i$.  If all walkers start independently from $\pi$, then
for every finite target $A$, independence gives the exact identity
\begin{equation}
   \bbP_{\pi^{\otimes q}}^G
      \bigl(H_A^{(i)}>t\text{ for every }1\le i\le q\bigr)
   =\bigl[\bbP_\pi^G(H_A>t)\bigr]^q.
\end{equation}
Consequently, for every fixed $k$,
\begin{equation}
   \bbE_{\pi^{\otimes q}}^G\bigl[(Y_N^{[q]}(s))_k\bigr]
   =\sum_{\mathbf v\in V_{N,\neq}^k}
      \bigl[\bbP_\pi^G(H_{A(\mathbf v)}>t_{N,q}(s))\bigr]^q.
\end{equation}
For a good tuple, the finite-target theorem and
$\capGT(A)=k\theta_d^{-1}+o(1/\log N)$ yield
\begin{align}
   \bigl[\bbP_\pi^G(H_A>t_{N,q}(s))\bigr]^q
   &=\exp\left\{-q\frac{t_{N,q}(s)}N
        \bigl(k\theta_d^{-1}+o(1/\log N)\bigr)+o(1)\right\}\\
   &=(1+o(1))N^{-k}e^{-ks}.
\end{align}
The same good set $\mathcal G_{N,k}$ as in
Proposition~\ref{prop:poisson-rrg} has $(1+o(1))N^k$ elements, so its total
contribution tends to $e^{-ks}$.
For a bad-tuple profile $(a_1,a_2,b)$, multiplication of the exponent by
$q$ exactly cancels the factor $1/q$ in $t_{N,q}(s)$.  Thus, by Lemma~\ref{lem:cluster-profiles}, its total
contribution is bounded, exactly as in
Proposition~\ref{prop:poisson-rrg}, by
\begin{equation}
   N^{-a_2(d-2)/d-b\theta_d\delta_{d,k}+o(1)}=o(1).
\end{equation}
There are only finitely many profiles for fixed $k$, so their total
contribution is $o(1)$.
The factorial moments converge to $(e^{-s})^k$, so
$Y_N^{[q]}(s)\Rightarrow{\rm Poisson}(e^{-s})$.  Since
\begin{equation}
   \{T_{\rm cov}^{[q]}\le t_{N,q}(s)\}
   =\{Y_N^{[q]}(s)=0\}=\{Y_N^{[q]}(s)<1/2\},
\end{equation}
and $1/2$ is a continuity point of the limiting distribution, the
stationary-start cover-time law follows.
\end{proof}

\begin{proof}[Passage to arbitrary starting $q$-tuples]
We apply Proposition~\ref{prop:deterministic-start-comparison} to the
product chain on $\Omega=V_N^q$ with $\mu=\pi^{\otimes q}$ and
$\psi(z)=\{z_1,\dots,z_q\}$.  Since the total-variation distance between
product measures is at most the sum of the distances between their
marginals, $\eta_\Omega(\ell)\le q\eta_N(\ell)$.  Choose
$\ell_N=\lceil C_q\log N\rceil$ so large that $\eta_N(\ell_N)\le N^{-10}$,
and put $\tau_N=t_{N,q}(s)-\ell_N$.  With
$m_N=\lceil C_q'\log N\rceil$ such that $\eta_N(m_N)\le N^{-10}$, the
Markov property, \eqref{eq:tv-test-function}, the singleton case of the
finite-target theorem and the rough capacity bound give
\begin{align}
   \sup_{y,v\in V_N}\bbP_y^G(H_v>\tau_N)
   &\le \sup_{v\in V_N}\bbP_\pi^G(H_v>\tau_N-m_N)+N^{-10}\\
   &\le \exp\left\{-\left[\frac{\theta_d}{q}\log N+O(1)\right]
          \delta_{d,1}+o(1)\right\}+N^{-10}
   \le N^{-\gamma}
\end{align}
for all large $N$ and any fixed
$0<\gamma<\min\{10,\theta_d\delta_{d,1}/q\}$.  For the product chain, the
first time some walker visits $v$ is at most the hitting time of $v$ by the
first walker, so $\max_{y\in\Omega}\bbP_y(H_v>\tau_N)\le N^{-\gamma}$ as
well.  Proposition~\ref{prop:deterministic-start-comparison}\textup{(ii)}
gives, uniformly over starting $q$-tuples,
\begin{equation}
   \left|\bbP_{x_1,\ldots,x_q}^G
      (T_{\rm cov}^{[q]}\le t_{N,q}(s))
   -\bbP_{\pi^{\otimes q}}^G
      (T_{\rm cov}^{[q]}\le\tau_N)\right|
   \le qN^{-10}+q(\ell_N+1)N^{-\gamma}=o(1).
\end{equation}
Lemma~\ref{lem:time-shift} with $a=\theta_d/q$ and the stationary result
complete the proof.
\end{proof}

\section{Spatial distribution and small components of late points}
\label{sec:late-set-structure}

We now study the unvisited set at times before and within the cover-time
window, in the model of Theorem~\ref{thm:rrg-cover-time}.
The results are stated first, followed by the additional estimates and
their proofs.  We retain the notation of the preceding sections.
All parameters are fixed as $N\to\infty$.  Limits are conditional on
$G_N$, in probability over $G_N$, and uniform over starting vertices;
$L^2$ limits refer to stationary start.  Weights, vertex sets and marks
are measurable functions of $G_N$, chosen without observing the walk.

\paragraph{Results.}

For $s\in\mathbb R$ and $0<\gamma\le1$, set
\begin{equation}
 t_N^{(\gamma)}(s):=\lfloor\theta_dN(\gamma\log N+s)\rfloor,
 \qquad
 \mathcal U_N^{(\gamma)}(s):=\{v:H_v>t_N^{(\gamma)}(s)\}.
 \label{eq:late-time-set-definition}
\end{equation}
For $0<\gamma<1$ and $f:V_N\to[-1,1]$, put
\[
 R_N(f):=N^{\gamma-1}\sum_{v\in\mathcal U_N^{(\gamma)}(s)}f(v)
              -\frac{e^{-s}}N\sum_{v\in V_N}f(v).
\]
Let $A_N\subseteq V_N$.  A ratio with denominator
$|\mathcal U_N^{(\gamma)}(s)|=0$ is assigned value zero.
For a fixed compact metric space $E$ and maps $\xi_N:V_N\to E$, define
\[
 \Xi_N(s):=\sum_{v\in\mathcal U_N^{(1)}(s)}\delta_{\xi_N(v)}.
\]
Let $\nu$ be a fixed Borel probability measure on $E$.
Write $\mathcal M_p(E)$ for the finite point measures with the weak
topology and $\operatorname{PPP}(\mu)$ for the Poisson random measure
with intensity $\mu$.

\begin{theorem}[Size and spatial distribution of unvisited vertices]
\label{thm:late-set-equidistribution}
\leavevmode
\begin{enumerate}[label=\textup{(\roman*)}]
\item For $0<\gamma<1$ and $\varepsilon>0$,
\begin{equation}
 \sup_f\bbE_{\pi_G}^G[R_N(f)^2]\longrightarrow0,
 \qquad
 \sup_{x\in V_N}\sup_f\bbP_x^G(|R_N(f)|>\varepsilon)
          \longrightarrow0.
 \label{eq:late-weighted-limits}
\end{equation}
Consequently, in probability,
\begin{equation}
 N^{\gamma-1}|\mathcal U_N^{(\gamma)}(s)|\longrightarrow e^{-s},
 \qquad
 \frac{|\mathcal U_N^{(\gamma)}(s)\cap A_N|}
      {|\mathcal U_N^{(\gamma)}(s)|}-\frac{|A_N|}{N}\longrightarrow0.
 \label{eq:late-size-equidistribution}
\end{equation}
\item If $N^{-1}\sum_v\delta_{\xi_N(v)}\Rightarrow\nu$ weakly in
probability over $G_N$, then
\begin{equation}
 \Xi_N(s)\Longrightarrow\operatorname{PPP}(e^{-s}\nu)
 \quad\text{in }\mathcal M_p(E).
 \label{eq:critical-late-PPP}
\end{equation}
\end{enumerate}
\end{theorem}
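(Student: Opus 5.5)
The plan is a second-moment computation for part (i) and factorial moments of a marked point process for part (ii), in both cases feeding the one- and two-point avoidance estimates into Lemma~\ref{lem:factorial-moment-identity} exactly as in Proposition~\ref{prop:poisson-rrg}. Throughout we work on $G\in\mathcal E_N$ and write $t=t_N^{(\gamma)}(s)$, so that $t/N=\theta_d(\gamma\log N+s)+O(1/N)$.

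\emph{Part (i), stationary start.} Expand
\[
\bbE_\pi[R_N(f)^2]=N^{2\gamma-2}\sum_{u,v}f(u)f(v)\bbP_\pi(H_{\{u,v\}}>t)-2\frac{e^{-s}}{N}\sum_v f(v)\,N^{\gamma-1}\sum_u f(u)\bbP_\pi(H_u>t)+\Bigl(\frac{e^{-s}}N\sum_v f(v)\Bigr)^2 .
\]
For a vertex $u\notin\mathcal B_{N,2}$, Theorem~\ref{thm:uniform-finite-target-fvtl} together with Lemma~\ref{lem:cluster-profiles}(iii) gives $\bbP_\pi(H_u>t)=(1+o(1))N^{-\gamma}e^{-s}$ uniformly. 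Bad singletons number $N^{o(1)}$, each has probability at most $1$, and they carry the weight $N^{\gamma-1}$, so their contribution is $o(1)$. The key point is that the error is uniform in $f$ because $|f|\le1$. For the pairs we split by profile. Good pairs contribute $(1+o(1))N^{-2\gamma}e^{-2s}$ each. The cross term and the square then match the main term up to $o(1)\cdot(\sum|f|/N)^2=o(1)$, uniformly in $f$.

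For bad pairs we use Lemma~\ref{lem:cluster-profiles}(i),(ii) with $\gamma$ inserted into the exponent. A close clean pair contributes at most
\[
N^{2\gamma-2}\cdot N^{1+o(1)}\cdot N^{-\gamma\frac{2(d-1)}{d}}=N^{-1+\gamma(2-\frac{2(d-1)}d)+o(1)}=N^{-1+2\gamma/d+o(1)},
\]
which is $o(1)$ since $d\ge3$ and $\gamma<1$. Pairs with a bad cluster are handled the same way; one can also just bound their probability by $1$ and use the count $N^{1+o(1)}$, but then the weight $N^{2\gamma-2}$ gives $N^{2\gamma-1+o(1)}$, which fails for $\gamma>1/2$. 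So the capacity bound is needed: a pair with one bad vertex contributes at most $N^{2\gamma-2}N^{1+o(1)}N^{-\gamma}=N^{\gamma-1+o(1)}$, using the clean partner's capacity $\theta_d^{-1}$ and the bad part's capacity at least $0$. Bad–bad pairs contribute $N^{2\gamma-2+o(1)}$. This completes the $L^2$ bound. Taking $f\equiv1$ and $f=\one_{A_N}$ then yields \eqref{eq:late-size-equidistribution}.

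\emph{Part (i), deterministic start.} I would imitate Proposition~\ref{prop:deterministic-start-comparison}. First couple the walk started at $x$ with a stationary walk after $\ell_N$ steps. The vertices visited during the first $\ell_N=O(\log N)$ steps change the sum by at most $N^{\gamma-1}O(\log N)$. The time shift by $\ell_N$ changes $t/N$ by $o(1)$, and the $L^2$ argument above is stable under shifting $s$ by $o(1)$ uniformly. Chebyshev's inequality then gives the in-probability statement.

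\emph{Part (ii).} By Lemma~\ref{lem:factorial-moment-identity} applied with marks, for continuous $h_1,\dots,h_k$ on $E$ the factorial moment measure of $\Xi_N$ is
\[
\sum_{\mathbf v}\prod_i h_i(\xi_N(v_i))\,\bbP_\pi(H_{A(\mathbf v)}>t_N(s)).
\]
Exactly as in Proposition~\ref{prop:poisson-rrg}, bad tuples contribute $o(1)$ and good tuples have probability $(1+o(1))N^{-k}e^{-ks}$. The sum over good tuples differs from the full product sum $\prod_i\sum_v h_i(\xi_N(v))$ by at most $\|h\|_\infty^kN^{k-1+o(1)}$. Hence the factorial moments converge to $e^{-ks}\prod_i\int h_i\,d\nu$, in probability over $G_N$. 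These are the factorial moments of $\operatorname{PPP}(e^{-s}\nu)$. Since the intensity is finite, the method of moments applied to the joint laws of $(\Xi_N(h_1),\dots)$ for nonnegative $h$ via Laplace functionals gives weak convergence in $\mathcal M_p(E)$. The step from stationary to deterministic start uses the same coupling and Lemma~\ref{lem:time-shift}-type monotonicity, which here applies to the counts $\Xi_N(h)$ with $h\ge0$.

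The main obstacle is the bad-pair bound in part (i) for $\gamma$ close to $1$, where the counting alone fails and the cluster-gap capacity from Lemma~\ref{lem:tree-cluster-gap} must be combined carefully with the $N^{o(1)}$ bounds on bad vertices. A secondary difficulty is keeping every estimate uniform over $f$ and over starting points.
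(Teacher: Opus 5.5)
\textbf{Overall.} Your stationary-start arguments work. In (i) you keep the bad vertices in the second moment and control pairs with a bad vertex through $\bbP_\pi(H_{\{u,v\}}>t)\le\bbP_\pi(H_v>t)$. The paper instead removes $\mathcal B_{N,2}$ from $R_N(f)$ at the deterministic cost $N^{\gamma-1}|\mathcal B_{N,2}|=o(1)$ before computing the variance, so the ``main obstacle'' you single out never really arises. In (ii) you test factorial moment measures against continuous functions and pass through Laplace functionals, whereas the paper uses counts on $\nu$-continuity sets plus tightness of the total mass. Both are standard, modulo the routine subsequence argument for convergence in probability over $G_N$. Your deterministic-start step for (i) is also correct, and cheaper than the paper's Lemma~\ref{lem:late-set-start-transfer}: the weight $N^{\gamma-1}$ makes the $O(\log N)$ vertices visited before coupling negligible outright.

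\textbf{The gap: deterministic start in (ii).} There $\gamma=1$, so every late vertex carries weight one. Your only stated mechanism for the initial segment, the bound $N^{\gamma-1}(\ell_N+1)$, becomes $O(\log N)$ rather than $o(1)$. On coupling success one has exactly $\mathcal U_N^{(1)}(s)=\widetilde{\mathcal U}\setminus S$. Here $S=\{X_0,\dots,X_{\ell_N}\}$, and $\widetilde{\mathcal U}$ is the unvisited set of the stationary continuation up to time $\tau=t_N(s)-\ell_N$.

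Time monotonicity (your Lemma~\ref{lem:time-shift}-type sandwich) handles $\tau$ versus $t_N(s)$. It also gives the one-sided bound $\Xi_N(s)(h)\le\widetilde\Xi(h)$ for $h\ge0$, where $\widetilde\Xi$ is the point measure of $\widetilde{\mathcal U}$. But it cannot exclude that a vertex of $S$ is missed by the continuation, which deletes a point from $\widetilde\Xi$. Stationary-start estimates do not control this event either, because $S$ is a path ending at the continuation's starting point $X_{\ell_N}=\widetilde X_0$.

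What is needed is the uniform bound $\max_{y,v}\bbP_y(H_v>\tau)\le N^{-c}$ of Lemma~\ref{lem:arbitrary-start-singletons}. Apply it conditionally on $(X_0,\dots,X_{\ell_N})$ and $\widetilde X_0$, using the fresh-chain property in Proposition~\ref{prop:deterministic-start-comparison}(i). This gives $\bbP(S\cap\widetilde{\mathcal U}\neq\emptyset)\le(\ell_N+1)N^{-c}\to0$, so the two late sets coincide with high probability; this is precisely the content of Lemma~\ref{lem:late-set-start-transfer}. With this step inserted, your proof of (ii) goes through.
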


Thus $\xi_N(v)=v/N$ on $V_N=\{1,\ldots,N\}$ gives intensity
$e^{-s}\,dx$ on $[0,1]$.
For any fixed number of pairwise disjoint sets $A_{N,j}$ with
$|A_{N,j}|/N\to a_j$ in probability over $G_N$ for constants $a_j$, the counts
$|\mathcal U_N^{(1)}(s)\cap A_{N,j}|$ converge jointly to independent
$\operatorname{Poisson}(e^{-s}a_j)$ variables.

Let $\mathfrak T_d(k)$ be the unlabelled trees with $k\ge1$ vertices and
maximum degree at most $d$.  For $T\in\mathfrak T_d(k)$, set
\begin{equation}
 q_{d,k}:=\frac{(d-2)k+2}{d},\qquad
 \gamma_{d,k}:=q_{d,k}^{-1},\qquad
 \kappa_{d,T}:=
 \frac{\prod_{u\in V(T)}(d)_{\deg_T(u)}}
      {|\operatorname{Aut}(T)|d^{k-1}}.
 \label{eq:late-component-thresholds}
\end{equation}
In particular $q_{d,1}=1$ and
$q_{d,2}=2(d-1)/d$.  Let $\mathcal S_N(T):=\{A\subseteq V_N:G_N[A]\simeq T\}$.
Write $Z_{T,N}^{(\gamma)}(s)$ for the number of sets in
$\mathcal S_N(T)$ contained in $\mathcal U_N^{(\gamma)}(s)$, and
$C_{T,N}^{(\gamma)}(s)$ for the number of connected components of
$G_N[\mathcal U_N^{(\gamma)}(s)]$ isomorphic to $T$.  Put
\begin{equation}
 m_{T,N}(\gamma,s):=\kappa_{d,T}e^{-q_{d,k}s}N^{1-\gamma q_{d,k}},
 \qquad
 \lambda_T(s):=\kappa_{d,T}e^{-q_{d,k}s},
 \label{eq:late-tree-mean-scale}
\end{equation}
and let $(\mathsf P_T:T\in\mathfrak T_d(k))$ be independent Poisson
variables with means $\lambda_T(s)$.

\begin{theorem}[Counts of fixed tree components]
\label{thm:late-component-hierarchy}
For each $k\ge1$ and $T\in\mathfrak T_d(k)$:
\begin{enumerate}[label=\textup{(\roman*)}]
\item If $0<\gamma<\gamma_{d,k}$, then
$Z_{T,N}^{(\gamma)}(s)/m_{T,N}(\gamma,s)\to1$ in $L^2$. Moreover,
 $Z_{T,N}^{(\gamma)}(s)/m_{T,N}(\gamma,s)$ and
$C_{T,N}^{(\gamma)}(s)/m_{T,N}(\gamma,s)$ converge to one in probability.
\item At $\gamma=\gamma_{d,k}$,
\begin{equation}
 \bigl(C_{T,N}^{(\gamma_{d,k})}(s):T\in\mathfrak T_d(k)\bigr)
 \Longrightarrow(\mathsf P_T:T\in\mathfrak T_d(k)).
 \label{eq:late-tree-copy-joint-Poisson}
\end{equation}
The same limit holds for $Z$, and the two vectors coincide with
probability tending to one.
\item If $\gamma_{d,k}<\gamma\le1$, then
$\bbP_x^G(Z_{T,N}^{(\gamma)}(s)>0)\to0$, and hence also
$\bbP_x^G(C_{T,N}^{(\gamma)}(s)>0)\to0$.
\end{enumerate}
\end{theorem}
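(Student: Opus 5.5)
We prove Theorem~\ref{thm:late-component-hierarchy} by the factorial-moment method of Section~\ref{sec:proof-main}, applied to sets $A\in\mathcal S_N(T)$ instead of single vertices. We work under stationary start throughout. The passage to arbitrary starting points then follows from Proposition~\ref{prop:deterministic-start-comparison} with $\ell_N=O(\log N)$: the counts $Z$ and $C$ change only if one of the $O(\log N)$ early-visited vertices would otherwise be late, and Lemma~\ref{lem:arbitrary-start-singletons} (at time $t_N^{(\gamma)}$) makes this unlikely, provided the time shift is absorbed as in Lemma~\ref{lem:time-shift}.

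The first step is the capacity of a clean tree. Let $A\in\mathcal S_N(T)$ avoid $\mathcal B_{N,K}$; it is then a single $2L_N$-cluster. By Proposition~\ref{prop:local-green-comparison}, $\capGT(A)=\Cap_{\Td}(A^*)+o(1/\log N)$, where $A^*$ is a connected $k$-vertex subtree of $\Td$. We compute $\Cap_{\Td}(A^*)$ from the equilibrium measure in Lemma~\ref{lem:tree-cluster-gap}. The set $A^*$ has $dk-2(k-1)$ boundary edges, and from each boundary neighbour the walk escapes forever without returning to $A^*$ with probability $(d-2)/(d-1)$, since it never returns to its parent. Hence
\[
\Cap_{\Td}(A^*)=\frac{(d-2)k+2}{d}\cdot\frac{d-2}{d-1}=q_{d,k}\theta_d^{-1}.
\]
For $k=1,2$ this matches Lemma~\ref{lem:tree-basic-capacities}. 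Theorem~\ref{thm:uniform-finite-target-fvtl} then gives $\bbP_\pi(A\subseteq\mathcal U_N^{(\gamma)}(s))=(1+o(1))N^{-\gamma q_{d,k}}e^{-q_{d,k}s}$.

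The second step counts $|\mathcal S_N(T)|$. A configuration-model computation gives $(1+o(1))\kappa_{d,T}N$ copies: root the tree, embed it vertex by vertex with $(d)_{\deg}$-type choices of half-edges, and divide by $|\operatorname{Aut}(T)|$ and by $d^{k-1}$ for the edge-matching probabilities. Only $N^{o(1)}$ of these copies meet $\mathcal B_{N,K}$. This count must hold in probability over $G_N$, so we would add it to the diagonal event $\mathcal E_N$ via a second-moment bound. Together with the first step this gives $\bbE_\pi Z=(1+o(1))m_{T,N}$. Bad copies contribute at most $N^{o(1)}\cdot N^{-\gamma\theta_d\delta+o(1)}$ for their $N^{o(1)}$ copies, which is negligible against $m_{T,N}$ when $\gamma<\gamma_{d,k}$ and tends to zero when $\gamma\ge\gamma_{d,k}$.

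The third step handles higher factorial moments. The $j$-th factorial moment of $Z$, jointly over the trees $T$, is a sum over $j$-tuples of distinct copies of $\bbP_\pi(H_{A_1\cup\dots\cup A_j}>t)$. We group the union into $2L_N$-clusters as in Lemma~\ref{lem:cluster-profiles}. For tuples of far-apart clean copies, Proposition~\ref{prop:local-green-comparison} gives capacity $\sum_i q_{d,k}\theta_d^{-1}+o(1/\log N)$, hence asymptotically $m_{T,N}^j$ (and $\prod_T\lambda_T^{j_T}$ at $\gamma=\gamma_{d,k}$). This yields the $L^2$ law in (i), the joint Poisson limit for $Z$ in (ii), and the first-moment bound $\bbE Z\to0$ in (iii).

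The main obstacle is the tuples whose copies overlap or lie close together. Such a union is a cluster $U$ with $u>k$ vertices. We need $\Cap_{\Td}(U^*)\ge q_{d,k}\theta_d^{-1}+c$, together with a counting bound of $N^{\#\text{clusters}+o(1)}$, so that each merged cluster loses a factor $N^{-\gamma c}$ relative to its $N^{1-\gamma q_{d,k}}$ weight. If $U^*$ is connected, the formula above gives capacity $q_{d,u}\theta_d^{-1}$, which exceeds $q_{d,k}\theta_d^{-1}$ by $(u-k)(d-2)/(d(d-1))$. If $U^*$ is disconnected but close, we would combine monotonicity with an explicit lower bound for near-disjoint unions, for example by the Dirichlet principle with cut-off functions, to get a strict gain. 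We must check that this gain beats the counting at $\gamma=\gamma_{d,k}$ and in regime (i). The passage from $Z$ to $C$ uses the same bound: a copy that is a late tree but not a component has an unvisited neighbour, so it is contained in an unvisited $(k+1)$-vertex tree. The expected number of these is $O(N^{1-\gamma q_{d,k+1}})$, which is $o(m_{T,N})$ and $o(1)$ at the threshold, so $Z$ and $C$ coincide with high probability.
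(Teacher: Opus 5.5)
Your overall plan matches the paper's: tree capacity $q_{d,k}\theta_d^{-1}$ from the equilibrium measure, factorial moments over copies grouped into $2L_N$-clusters with profile counting, and $Z-C$ bounded by unvisited $(k+1)$-vertex extensions. Parts (i) and (iii) go through. In (i) you do not even need a strict gain. For the $N^{1+o(1)}$ close or overlapping pairs, $\bbP_\pi(A\cup A'\subseteq\mathcal U_N^{(\gamma)}(s))\le\bbP_\pi(A\subseteq\mathcal U_N^{(\gamma)}(s))$ gives a total of $N^{1-\gamma q_{d,k}+o(1)}=o(m_{T,N}^2)$, because $\gamma q_{d,k}<1$.

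The gap is in (ii), at exactly the step you call the main obstacle. At $\gamma=\gamma_{d,k}$, clusters containing two distinct selected copies are realised by $N^{1+o(1)}$ tuples. With only the monotone bound $\Cap\ge q_{d,k}\theta_d^{-1}$, each has probability $N^{-1+o(1)}$. Their total is therefore bounded only by $N^{o(1)}$, and the mixed factorial moments are not shown to converge.

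You need a gain $c>0$ that is uniform over all such clusters. You obtain it only when the union is connected, and leave open copies at distance between $2$ and $2L_N$, a range that grows with $N$. The tool you suggest also points the wrong way: a cut-off test function in the Dirichlet principle bounds the infimum, i.e.\ the capacity, from above, not from below.

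The missing idea is that the disconnected union never has to be analysed. Since the copies are distinct, the cluster contains a selected copy $A$ and a vertex $z\notin A$, so by monotonicity it suffices to bound $\Cap_{\Td}(A^*\cup\{z^*\})$. Block inversion gives $\Cap_{\Td}(A\cup\{z\})-\Cap_{\Td}(A)=(1-h_A(z))^2/S$ with Schur complement $S\in(0,\theta_d]$. Because the connected set $A$ lies in one branch of $\Td\setminus\{z\}$, $1-h_A(z)\ge(d-2)/d$. The gain is therefore at least $\varepsilon_d\theta_d^{-1}$ with $\varepsilon_d=((d-2)/d)^2$, independently of $\dist(z,A)$.

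Your own escape-probability computation also gives a uniform gain. The vertex $z$ contributes $e(z)\ge(d-2)/d$, while only the one boundary edge of $A$ facing $z$ loses escape probability, at most $\theta_d^{-1}/d$. Either route produces the factor $N^{-a_2\varepsilon_d/q_{d,k}}$ (or better) that kills such tuples.

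Two minor points:
\begin{itemize}
\item The connected increment is $(u-k)(d-2)^2/(d(d-1))$, not $(u-k)(d-2)/(d(d-1))$.
\item The count of $\mathcal S_N(T)$ needs no second-moment argument. On $\mathcal E_N$, each root with a tree neighbourhood carries exactly $(d)_{\deg_T(r)}\prod_{u\ne r}(d-1)_{\deg_T(u)-1}$ embeddings, so the count is deterministic up to $N^{o(1)}$.
\end{itemize}
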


Using $L_N$ and the $2L_N$-clusters from
Section~\ref{sec:rrg-estimates}, write
$D_N^{(\gamma)}(s):=|E(G_N[\mathcal U_N^{(\gamma)}(s)])|$ and set
\[
 \Lambda_N(\gamma,s):=\frac d2e^{-2(d-1)s/d}N^{1-2\gamma(d-1)/d},
 \quad
 \lambda_d(s):=\frac d2e^{-2(d-1)s/d}.
\]

\begin{corollary}[Adjacent unvisited vertices]
\label{cor:late-independence-transition}
For $0<\gamma<\gamma_{d,2}$, $D_N^{(\gamma)}(s)/\Lambda_N(\gamma,s)\to1$
in $L^2$ and in probability.  At $\gamma=\gamma_{d,2}$,
$D_N^{(\gamma)}(s)\Rightarrow\operatorname{Poisson}(\lambda_d(s))$;
with probability tending to one, every $2L_N$-cluster of the late set is a singleton or
the endpoints of an edge.  For $\gamma_{d,2}<\gamma\le1$, with probability
tending to one no two late vertices are within distance $2L_N$.
Consequently,
\begin{equation}
 \bbP_x^G\bigl(\mathcal U_N^{(\gamma)}(s)\text{ is independent}\bigr)
 \longrightarrow
 \begin{cases}
 0,&\gamma<\gamma_{d,2},\\
 e^{-\lambda_d(s)},&\gamma=\gamma_{d,2},\\
 1,&\gamma>\gamma_{d,2}.
 \end{cases}
 \label{eq:late-independent-transition}
\end{equation}
\end{corollary}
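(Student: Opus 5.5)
We derive the corollary from Theorem~\ref{thm:late-component-hierarchy} with $k=2$, plus a separate estimate on close pairs that are not adjacent. For $k=2$ the class $\mathfrak T_d(2)$ contains only the single edge $T_2$. For this tree $\prod_u(d)_{\deg u}=d^2$, $|\operatorname{Aut}(T_2)|=2$ and $d^{k-1}=d$, so $\kappa_{d,T_2}=d/2$. With $q_{d,2}=2(d-1)/d$ this gives $m_{T_2,N}(\gamma,s)=\Lambda_N(\gamma,s)$ and $\lambda_{T_2}(s)=\lambda_d(s)$.

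Since $D_N^{(\gamma)}(s)$ counts the edges of $G_N$ with both endpoints unvisited, it equals $Z_{T_2,N}^{(\gamma)}(s)$ exactly. Part (i) of the theorem then gives the $L^2$ and in-probability law of large numbers for $\gamma<\gamma_{d,2}$, and part (ii) gives $D_N\Rightarrow\operatorname{Poisson}(\lambda_d(s))$ at $\gamma=\gamma_{d,2}$.

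The extra ingredient concerns pairs at distance between $2$ and $2L_N$, and triples inside one cluster. For $\gamma\ge\gamma_{d,2}$ we bound the expected number of such configurations at stationary start, as in the bad-tuple step of Proposition~\ref{prop:poisson-rrg}.

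\emph{Clean pairs at distance $r\ge2$.} By Lemma~\ref{lem:tree-basic-capacities} and Proposition~\ref{prop:local-green-comparison}, such a pair has $\capGT=\frac{2\theta_d^{-1}}{1+(d-1)^{-r}}+o(1/\log N)\ge \frac{2(d-1)^2}{d^2-2d+2}\theta_d^{-1}-o(1/\log N)$. Theorem~\ref{thm:uniform-finite-target-fvtl} gives avoidance probability $N^{-\gamma\cdot 2(d-1)^2/(d^2-2d+2)+o(1)}$. There are at most $N\cdot d(d-1)^{r-1}$ such pairs at distance $r$. Summing over $2\le r\le 2L_N$, the dominant term is $r=2$. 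At $\gamma=\gamma_{d,2}=d/(2(d-1))$ the exponent is $1-\frac{d(d-1)}{d^2-2d+2}<0$, since $d(d-1)>d^2-2d+2$ for $d\ge3$.

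\emph{Larger configurations.} Clean clusters of size at least $3$ have capacity at least that of a path of three vertices, hence strictly larger than $q_{d,2}\theta_d^{-1}$. This follows from monotonicity (Lemma~\ref{lem:tree-cluster-gap}) together with an explicit $3$-point computation; alternatively one may use the tree-component thresholds $q_{d,3}>q_{d,2}$, applying the same capacity lower bound to non-connected clusters by monotonicity onto a pair at distance $\ge2$. Clusters meeting $\mathcal B_{N,2}$ have only $N^{o(1)}$ choices. For these I would need the avoidance probability to be $N^{-\epsilon}$ relative to the count. This is the main obstacle: Proposition~\ref{prop:rough-capacity-bound} only gives $\delta_{d,k}$, which is weak. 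I would first try to show that bad vertices still have $\capGT(\{v\})$ bounded below by a constant times $\theta_d^{-1}$ with positive exponent. Since the count of bad pairs is $N^{o(1)}$, any fixed positive capacity already suffices, because $N^{o(1)}\cdot N^{-\gamma\theta_d\delta}\to0$.

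Together these show that, at $\gamma=\gamma_{d,2}$, with probability tending to one every $2L_N$-cluster of the late set is a singleton or an edge. Consequently the late set is independent if and only if $D_N=0$, which has probability tending to $e^{-\lambda_d(s)}$.

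For $\gamma>\gamma_{d,2}$, the same first-moment sums, now including edges since $1-\gamma q_{d,2}<0$, vanish, so no two late vertices lie within distance $2L_N$. For $\gamma<\gamma_{d,2}$ we have $D_N/\Lambda_N\to1$ with $\Lambda_N\to\infty$, so $\bbP(D_N=0)\to0$.

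All first-moment bounds are computed at stationary start. They transfer to arbitrary starting vertices $x$ by Proposition~\ref{prop:deterministic-start-comparison}(i), which couples the walk after $\ell_N$ steps. The $O(\log N)$ vertices visited before that time only remove late points, which can only help for the events "no close pair" and "no edge"; for the Poisson limit I would rely on the uniform-in-$x$ statement of the theorem itself. The time shift by $\ell_N$ is absorbed by Lemma~\ref{lem:time-shift}, or directly by the factor $N^{o(1)}$.
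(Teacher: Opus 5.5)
Your proposal is correct and is essentially the paper's argument. As in the paper, you identify $D_N^{(\gamma)}(s)=Z_{T,N}^{(\gamma)}(s)$ for the one-edge tree with $\kappa_{d,T}=d/2$, and you exclude late vertices in the bad set because there are $N^{o(1)}$ of them and each survives with probability $N^{-\gamma\theta_d\delta_{d,1}+o(1)}$. So your ``main obstacle'' is not one, as you yourself conclude. You then exclude clean pairs at distance between $2$ and $2L_N$ via the capacity $\widehat q_d\theta_d^{-1}$, with $\widehat q_d=2/(1+(d-1)^{-2})>2(d-1)/d$.

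The remaining differences are cosmetic. The paper handles clusters with three or more vertices for free: in a tree neighbourhood, two $G_N$-edges sharing a vertex give a late pair at distance exactly two, which is already excluded, so your separate three-point capacity bound is redundant. The paper also transfers to arbitrary starts with Lemma~\ref{lem:late-set-start-transfer}, which gives equality of the late sets with probability tending to one. Your monotonicity argument under the coupling works as well.
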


Let $\mathcal M_N^{(\gamma)}(s)$ be the maximum number of vertices in a
component of $G_N[\mathcal U_N^{(\gamma)}(s)]$, with value zero for the
empty set, and put
\begin{equation}
 K_{d,k}:=\sum_{T\in\mathfrak T_d(k)}\kappa_{d,T}
 =\frac{d}{k((d-2)k+2)}\binom{(d-1)k}{k-1}.
 \label{eq:total-tree-copy-density}
\end{equation}
Set $\lambda_{d,k}(s):=K_{d,k}e^{-q_{d,k}s}$.  The identity for
$K_{d,k}$ is proved in Lemma~\ref{lem:connected-tree-capacity}.

\begin{corollary}[Largest component of the unvisited set]
\label{cor:late-largest-component}
For $k\ge1$ and $\gamma_{d,k+1}<\gamma<\gamma_{d,k}$,
$\bbP_x^G(\mathcal M_N^{(\gamma)}(s)=k)\to1$.
At $\gamma=\gamma_{d,k}$,
\begin{equation}
 \sum_{T\in\mathfrak T_d(k)}C_{T,N}^{(\gamma_{d,k})}(s)
 \Longrightarrow\operatorname{Poisson}(\lambda_{d,k}(s));
 \label{eq:critical-size-k-component-count}
\end{equation}
with probability tending to one, every $k$-vertex late component is a
tree and no component has more than $k$ vertices.  Moreover,
\begin{equation}
 \begin{aligned}
 \bbP_x^G(\mathcal M_N^{(\gamma_{d,k})}(s)=k)
       &\longrightarrow1-e^{-\lambda_{d,k}(s)},\\
 \bbP_x^G(\mathcal M_N^{(\gamma_{d,k})}(s)=k-1)
       &\longrightarrow e^{-\lambda_{d,k}(s)}.
 \end{aligned}
 \label{eq:critical-largest-component-law}
\end{equation}
\end{corollary}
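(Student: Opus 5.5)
The plan is to read Corollary~\ref{cor:late-largest-component} off Theorem~\ref{thm:late-component-hierarchy}. The one extra ingredient is a step that rules out connected unvisited sets which are \emph{not} induced trees. Throughout we work on $G\in\mathcal E_N$ and use the same uniformity in $x$ as in the theorem.

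\emph{Step 1: non-tree connected sets are negligible.} Fix $j\ge1$ and let $\mathcal N_j$ be the family of connected $j$-vertex sets $A$ such that $G_N[A]$ is not a tree. Such an $A$ contains a cycle of length at most $j$, so every vertex of $A$ lies in $\mathcal B_{N,j}$. Choosing a vertex of $\mathcal B_{N,j}$ and then the remaining vertices within distance $j$, Proposition~\ref{prop:diagonal-event} gives $|\mathcal N_j|\le N^{o(1)}$. For each such $A$ and $v\in A$, we argue as in Lemma~\ref{lem:arbitrary-start-singletons}: apply the Markov property at time $\ell_N$, then Theorem~\ref{thm:uniform-finite-target-fvtl} and Proposition~\ref{prop:rough-capacity-bound}. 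This gives
\[
\bbP_x\bigl(H_A>t_N^{(\gamma)}(s)\bigr)\le\bbP_x\bigl(H_v>t_N^{(\gamma)}(s)\bigr)\le N^{-\gamma\theta_d\delta_{d,1}+o(1)}
\]
uniformly in $x$. A union bound then shows that, with probability tending to one, no set in $\mathcal N_j$ is unvisited. We use this for $j=k$ and $j=k+1$. Consequently, whp every connected unvisited set with $k$ or $k+1$ vertices induces a tree.

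\emph{Step 2: no component has more than $k$ vertices for $\gamma>\gamma_{d,k+1}$.} A component with more than $k$ vertices contains a connected subset with exactly $k+1$ vertices. By Step~1 this subset induces some $T\in\mathfrak T_d(k+1)$, so it is counted by $Z^{(\gamma)}_{T,N}(s)$. Since $\mathfrak T_d(k+1)$ is finite, Theorem~\ref{thm:late-component-hierarchy}\textup{(iii)} applied with $k+1$ gives $\mathcal M_N^{(\gamma)}(s)\le k$ whp. This holds for every $\gamma>\gamma_{d,k+1}$, in particular for $\gamma=\gamma_{d,k}$, because $q_{d,k}$ is increasing in $k$.

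\emph{Step 3: lower bound for $\gamma_{d,k+1}<\gamma<\gamma_{d,k}$.} Fix any $T\in\mathfrak T_d(k)$. Since $\gamma<\gamma_{d,k}$, the mean $m_{T,N}(\gamma,s)\to\infty$. Part~\textup{(i)} of Theorem~\ref{thm:late-component-hierarchy} then gives $C^{(\gamma)}_{T,N}(s)>0$ whp, so $\mathcal M_N^{(\gamma)}(s)\ge k$. Together with Step~2 this gives $\mathcal M_N^{(\gamma)}(s)=k$ whp.

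\emph{Step 4: the critical case $\gamma=\gamma_{d,k}$.} Part~\textup{(ii)} gives the joint Poisson limit of $(C_{T,N}^{(\gamma)}(s))_{T\in\mathfrak T_d(k)}$ with independent coordinates. The sum of these coordinates therefore converges to a Poisson variable with mean $\sum_T\kappa_{d,T}e^{-q_{d,k}s}=\lambda_{d,k}(s)$, which is \eqref{eq:critical-size-k-component-count}. The closed form of $K_{d,k}$ comes from Lemma~\ref{lem:connected-tree-capacity}. By Step~1 with $j=k$, every $k$-vertex component is whp a tree, and by Step~2 no component exceeds $k$ vertices. Hence, whp, the event $\{\mathcal M_N^{(\gamma)}(s)=k\}$ coincides with the event that the sum of the $C_{T,N}$ is positive, which gives the limit $1-e^{-\lambda_{d,k}(s)}$. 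On the complement, $\mathcal M_N^{(\gamma)}(s)\le k-1$. It remains to show $\mathcal M_N^{(\gamma)}(s)\ge k-1$ whp:
\begin{itemize}[label={}]
\item For $k\ge2$, we have $\gamma_{d,k}<\gamma_{d,k-1}$, so Step~3 applied with $k-1$ gives at least one component of size $k-1$ whp.
\item For $k=1$, the bound $\mathcal M\ge0$ is trivial, and the claim reduces to the Poisson limit of $Y_N(s)$ from Theorem~\ref{thm:rrg-cover-time}.
\end{itemize}
This gives the second line of \eqref{eq:critical-largest-component-law}.

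The main obstacle is Step~1. It needs a bound on avoidance probabilities for sets near short cycles that is uniform over deterministic starting points. I would obtain this through the singleton comparison at time $\ell_N$, as in Lemma~\ref{lem:arbitrary-start-singletons}; this works because $\gamma>0$ makes the exponent $\gamma\theta_d\delta_{d,1}$ strictly positive, which beats the $N^{o(1)}$ count.
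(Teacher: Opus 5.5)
Your proposal is correct and follows essentially the paper's route: the rough singleton bound, transferred to arbitrary starts, rules out late sets near short cycles, and the remaining claims (the range, the Poisson threshold law, and the $(k-1)$-component lower bound) are read off parts (i)--(iii) of Theorem~\ref{thm:late-component-hierarchy}. The only cosmetic difference is that the paper excludes every unvisited vertex of $\mathcal B_{N,k+1}$ at once and extracts a $(k+1)$-vertex induced tree by breadth-first exploration, whereas you exclude unvisited non-tree connected $j$-sets for $j=k,k+1$ directly; both amount to the same union bound over $N^{o(1)}$ objects.
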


Cooper and Frieze~\cite[Theorem~5]{CooperFrieze2013} studied tree-component
counts in the unvisited set.  At the times $t_N^{(\gamma)}(s)$ considered
here, summing \eqref{eq:late-tree-mean-scale} over tree shapes gives the
same asymptotic scale as their formula, and the thresholds $\gamma_{d,k}$
correspond to the same disappearance times.  Their concentration theorem
leaves out the windows in which the expected component counts remain
bounded.  Theorem~\ref{thm:late-component-hierarchy} gives joint independent
Poisson limits for individual tree shapes in these windows.  Our limits
are conditional on the graph, hold in probability over $G_N$, and are
uniform over starting vertices.

\paragraph{Additional estimates.}

The finite-target estimate and local Green-matrix comparison also apply
at the times $t_N^{(\gamma)}(s)$.  We additionally use a coupling of the
entire unvisited sets, the capacity of a connected set in $\Td$ and a
lower bound on its increase when another vertex is added, and the number
of copies of each fixed tree in $G_N$.  These estimates give the weighted
second moments and mixed factorial moments used below.

For the rest of this section, fix a deterministic graph sequence
$G_N\in\mathcal E_N$, with $\mathcal E_N$ from
Proposition~\ref{prop:diagonal-event}.

With $\eta_N$ as in Lemma~\ref{lem:mixing}, choose
$b_N=\lceil C\log N\rceil$ so that $\eta_N(b_N)\le N^{-10}$.
For a stationary walk $\widetilde X$, write
$\widetilde{\mathcal U}_N^{(\gamma)}(s)$ for its unvisited set at time
$t_N^{(\gamma)}(s)$.  Below, $\bbP$ denotes the coupling law conditional
on $G$.

\begin{lemma}
\label{lem:late-set-start-transfer}
For every fixed $0<\gamma\le1$, walks from $x\in V_N$ can be coupled
with stationary walks so that, for every fixed $M>0$,
\begin{equation}
 \sup_{x\in V_N}\sup_{|s|\le M}\bbP\!\left(
  \mathcal U_N^{(\gamma)}(s)\ne
  \widetilde{\mathcal U}_N^{(\gamma)}
       \!\left(s-\frac{b_N}{\theta_dN}\right)
 \right)\longrightarrow0.
 \label{eq:late-set-start-coupling}
\end{equation}
\end{lemma}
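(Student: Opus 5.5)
We use the coupling of Proposition~\ref{prop:deterministic-start-comparison}\textup{(i)} with $\Omega=V_N$, $\psi(z)=\{z\}$ and $\ell=b_N$.  Let $(X_t)$ start at $x$ and let $(\widetilde X_t)$ start from $\pi$, coupled so that $X_{b_N+t}=\widetilde X_t$ for all $t\ge0$ with probability at least $1-\eta_N(b_N)\ge1-N^{-10}$.  Set $t:=t_N^{(\gamma)}(s)$ and $\tilde t:=t-b_N$.  Note that $\lfloor\theta_dN(\gamma\log N+s-b_N/(\theta_dN))\rfloor=\lfloor\theta_dN(\gamma\log N+s)-b_N\rfloor=\tilde t$ because $b_N$ is an integer.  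Hence $\widetilde{\mathcal U}_N^{(\gamma)}(s-b_N/(\theta_dN))$ is exactly the set of vertices not visited by $\widetilde X$ during $[0,\tilde t]$.  On the coupling event this is the set of vertices not visited by $X$ during $[b_N,t]$.  Therefore, on the coupling event,
\[
 \widetilde{\mathcal U}_N^{(\gamma)}\bigl(s-b_N/(\theta_dN)\bigr)\setminus\mathcal U_N^{(\gamma)}(s)
 \subseteq S:=\{X_0,\dots,X_{b_N-1}\},
\]
and the reverse inclusion $\mathcal U_N^{(\gamma)}(s)\subseteq\widetilde{\mathcal U}_N^{(\gamma)}(\cdot)$ holds automatically.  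So the two sets differ only if some $v\in S$ is missed by the continuation $(\widetilde X_u)_{0\le u\le\tilde t}$.

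Next we bound this probability by conditioning on $(X_0,\dots,X_{b_N})$ and $\widetilde X_0$.  By the last assertion of Proposition~\ref{prop:deterministic-start-comparison}\textup{(i)}, $\widetilde X$ is then a Markov chain started at $\widetilde X_0$.  A union bound over the at most $b_N$ vertices of $S$ gives
\[
 \bbP(\text{sets differ})\le N^{-10}+b_N\max_{y,v\in V_N}\bbP_y(H_v>\tilde t).
\]
To control the maximum, we argue as in Lemma~\ref{lem:arbitrary-start-singletons}.  Apply the Markov property at time $b_N$ and use \eqref{eq:tv-test-function}.  This gives $\bbP_y(H_v>\tilde t)\le\bbP_\pi(H_v>\tilde t-b_N)+N^{-10}$.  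We then invoke Theorem~\ref{thm:uniform-finite-target-fvtl} together with Proposition~\ref{prop:rough-capacity-bound}, using $\capGT(\{v\})\ge\delta_{d,1}$.  Since $\tilde t-b_N\ge\theta_dN(\gamma\log N-M)-O(\log N)$ uniformly in $|s|\le M$, these yield the bound $N^{-\gamma\theta_d\delta_{d,1}+o(1)}$.  In fact a clean vertex gives exponent $\gamma$, but the rough bound suffices.  Because $\gamma>0$ is fixed, this is at most $N^{-c}$ for some $c>0$.  Multiplying by $b_N=O(\log N)$ still tends to zero.

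All bounds are uniform in $x$ and in $|s|\le M$: the finite-target theorem is uniform in $t\le C_0N\log N$, and all remaining estimates are deterministic functions of $N$.  The only point needing care is the exact time bookkeeping.  One must check that the shift $s-b_N/(\theta_dN)$ reproduces $\tilde t=t-b_N$ exactly rather than up to $\pm1$, which the integrality of $b_N$ ensures.  One must also use the conditional Markov property of $\widetilde X$ given the pre-$b_N$ path, so that the union bound over the random set $S$ is legitimate.  I expect no real obstacle beyond this.
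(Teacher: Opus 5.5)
Your proposal is correct and follows the paper's proof essentially step for step. It uses the coupling of Proposition~\ref{prop:deterministic-start-comparison}\textup{(i)} with $\ell=b_N$, a union bound over the pre-$b_N$ path via the conditional Markov property, the singleton bound from mixing, the finite-target theorem and the rough capacity bound, and the exact identity $t_N^{(\gamma)}(s)-b_N=t_N^{(\gamma)}(s-b_N/(\theta_dN))$; the only difference is cosmetic, since you take $S=\{X_0,\dots,X_{b_N-1}\}$ (as $X_{b_N}=\widetilde X_0$ on coupling success) and so get the factor $b_N$ instead of $b_N+1$.
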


\begin{proof}
Put $\tau=t_N^{(\gamma)}(s)-b_N$.  The Markov property after $b_N$
steps, Theorem~\ref{thm:uniform-finite-target-fvtl}, and
Proposition~\ref{prop:rough-capacity-bound} give
\[
 \sup_{y,v}\bbP_y^G(H_v>\tau)
 \le\eta_N(b_N)+\sup_v\bbP_{\pi_G}^G(H_v>\tau-b_N)
 \le N^{-10}+N^{-\gamma\theta_d\delta_{d,1}+o(1)},
\]
uniformly for $|s|\le M$.  Use the coupling of
Proposition~\ref{prop:deterministic-start-comparison}\textup{(i)} with
$\Omega=V_N$ and $\ell=b_N$, and let $S=\{X_0,\ldots,X_{b_N}\}$.
On coupling success the late sets can differ only at vertices of $S$
missed by the stationary continuation.  Conditional on the initial
path and the coupled states, that continuation is a fresh walk from
its initial state.  Thus the probability of unequal late sets is at most
\[
 \eta_N(b_N)+(b_N+1)\sup_{y,v}\bbP_y^G(H_v>\tau)=o(1).
\]
Finally, $\tau=t_N^{(\gamma)}(s-b_N/(\theta_dN))$ exactly.
\end{proof}

Put $\varepsilon_d=((d-2)/d)^2$.

\begin{lemma}
\label{lem:connected-tree-capacity}
For every connected $k$-vertex set $A\subset\Td$ and $z\notin A$,
\begin{equation}
 \Cap_{\Td}(A)=q_{d,k}\theta_d^{-1},\qquad
 \Cap_{\Td}(A\cup\{z\})\ge(q_{d,k}+\varepsilon_d)\theta_d^{-1}.
 \label{eq:connected-tree-capacity-gap}
\end{equation}
For every fixed $T\in\mathfrak T_d(k)$ and every deterministic sequence
$G_N\in\mathcal E_N$,
\begin{equation}
 |\mathcal S_N(T)|=(\kappa_{d,T}+o(1))N.
 \label{eq:tree-copy-enumeration}
\end{equation}
The identity in \eqref{eq:total-tree-copy-density} also holds.
\end{lemma}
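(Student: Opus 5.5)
Three parts: the exact capacity of a connected set in $\Td$, the increment bound when a vertex is added, and the enumeration of tree copies together with the identity \eqref{eq:total-tree-copy-density}.

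\emph{Capacity of a connected set.} I would use the equilibrium-measure description from the proof of Lemma~\ref{lem:tree-cluster-gap}, namely $\Cap_{\Td}(A)=\sum_{x\in A}e_A(x)$ with $e_A(x)=\bbP_x(X_t\notin A\text{ for all }t\ge1)$. For connected $A$, a walk that steps from $x$ to a neighbour $y\notin A$ lies in a subtree hanging off $x$ that meets $A$ only through $x$. It therefore escapes forever with probability $1-1/(d-1)=(d-2)/(d-1)$, the non-return probability of the one-sided tree. Hence
\[
e_A(x)=\frac{d-\deg_A(x)}{d}\cdot\frac{d-2}{d-1}.
\]
A connected $k$-set is a subtree, so it has $k-1$ internal edges. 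Summing over $x\in A$ gives
\[
\Cap_{\Td}(A)=\frac{dk-2(k-1)}{d}\,\theta_d^{-1}=q_{d,k}\theta_d^{-1}.
\]

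\emph{Increment when adding $z\notin A$.} Let $r=\dist(z,A)\ge1$.

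If $r=1$, then $A\cup\{z\}$ is connected with $k+1$ vertices. Its capacity is $q_{d,k+1}\theta_d^{-1}$, and $q_{d,k+1}-q_{d,k}=(d-2)/d\ge\varepsilon_d$.

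If $r\ge2$, let $a\in A$ be the nearest point. Since $\deg_A(a)\le d-1$, there is an exit direction from $a$ toward $z$. I would compare the $2\times2$-block Schur complement: with $e=e_A$,
\[
\Cap(A\cup\{z\})=\Cap(A)+\frac{(1-h)^2}{g(z,z)-\sum_{x\in A}e(x)g(x,z)},
\]
where $h=\bbP_z(H_A<\infty)=\sum_x e(x)g(x,z)\le(d-1)^{-r}\cdot(\text{const})$. More simply, $h=(d-1)^{-r}$, since on the tree from $z$ one must pass through the geodesic to $a$. The denominator is then at most $\theta_d(1-h^2)$, or at least bounded by $\theta_d$. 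The increment is therefore at least $(1-h)^2\theta_d^{-1}\ge(1-1/(d-1)^2)^2\theta_d^{-1}$. I would check that this dominates $((d-2)/d)^2\theta_d^{-1}$: for $r\ge2$ we have $1-(d-1)^{-2}\ge(d-2)/d$ trivially. The main thing to verify carefully is the Schur-complement identity $\Cap(F\cup\{z\})-\Cap(F)=(1-\sum_x e_F(x)g(x,z))^2/(g(z,z)-g_F^\top G_F^{-1}g_F)$. This is standard block inversion, and the denominator is at most $g(z,z)=\theta_d$ since $G_F^{-1}$ is positive definite. With this bound, the $r=1$ case is also covered uniformly, since there $h\le 1/(d-1)$ and $(1-h)^2\ge((d-2)/(d-1))^2\ge\varepsilon_d$. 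So I would present one uniform argument.

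\emph{Enumeration.} On $\mathcal E_N$, all but $N^{o(1)}$ vertices have tree neighbourhoods of radius $10KL_N\gg k$, and every copy of $T$ meeting a bad vertex lies within bounded distance of $\mathcal B_{N,k}$, contributing $N^{o(1)}$. Induced copies of $T$ at a good root are in bijection with induced copies of $T$ in $\Td$ containing the root. In the tree, every subtree is induced, so it suffices to count unlabelled subtrees isomorphic to $T$ containing a fixed root $o$, weighted by $1/k$ per vertex to get sets. Embeddings of $T$ into $\Td$ rooted at a chosen vertex $u\mapsto o$ are counted by choosing ordered neighbours: $d$ choices per edge step at the root, and $(d-1)$ otherwise. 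The clean way is
\[
\#\{\text{copies containing }o\}=\frac{k\prod_u(d)_{\deg u}}{|\mathrm{Aut}T|\,d^{k-1}}\cdot\frac1k\cdot k,
\]
so that averaging $\sum_{u}(d)_{\deg u}\cdots$ gives $\kappa_{d,T}$ per vertex after dividing by $k$. I expect this bookkeeping to be the fiddliest step. I would verify it on $k=2$, where it gives $d^2\cdot(d-1)\cdots$ and should equal $d/2$ edges per vertex, and fix the normalization accordingly.

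\emph{The identity for $K_{d,k}$.} Summing over $T$ gives the number of $k$-vertex subtrees of $\Td$ containing $o$, divided by $k$. This is the classical Fuss–Catalan count $\frac{d}{(d-2)k+2}\binom{(d-1)k}{k-1}$ from Lagrange inversion of the generating function $F=x(1+F_1)^d$ with $F_1=x(1+F_1)^{d-1}$. Dividing by $k$ gives \eqref{eq:total-tree-copy-density}.
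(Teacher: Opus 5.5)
Your proposal is correct and follows the paper's proof essentially step by step: the equilibrium-measure computation $e_A(x)=\theta_d^{-1}(d-\deg_A(x))/d$, the Schur-complement identity with $S\le\theta_d$, the rooted embedding count, and Lagrange inversion for $K_{d,k}$; the one variation is that you use the exact value $h_A(z)=(d-1)^{-\dist(z,A)}$ where the paper bounds escape through the $d-1$ components of $\Td\setminus\{z\}$ not containing $A$. When writing it up, state the rooted count as $(d)_{\deg_T(r)}\prod_{u\ne r}(d-1)_{\deg_T(u)-1}=\prod_u(d)_{\deg_T(u)}/d^{k-1}$, because ``$d$ choices per edge at the root and $d-1$ otherwise'' taken literally ignores that sibling images must be distinct (your displayed falling-factorial formula is already right).
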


\begin{proof}
Lemma~\ref{lem:tree-cluster-gap} gives
$\Cap_{\Td}(A)=\sum_{x\in A}e_A(x)$.  After a first step from $x\in A$
to a vertex outside $A$, the walk can next enter $A$ only at $x$, because
$A$ is connected and the ambient graph is a tree.  Thus
$e_A(x)=\theta_d^{-1}(d-\deg_A(x))/d$.  Summing over $x\in A$ and using
$\sum_x\deg_A(x)=2(k-1)$ proves the first identity.

Let $G_A=G_{\Td}(A)$, $g=(g_{\Td}(a,z):a\in A)$ and
$h_A(z)=\bbP_z(H_A<\infty)=g^\top G_A^{-1}\one_A$.  Block inversion gives
\begin{equation}
 \Cap_{\Td}(A\cup\{z\})-\Cap_{\Td}(A)
 =\frac{(1-h_A(z))^2}{S},\qquad
 S:=\theta_d-g^\top G_A^{-1}g\in(0,\theta_d].
 \label{eq:connected-tree-extension-schur}
\end{equation}
Positivity of $S$ follows from positive definiteness of the tree Green
matrix.  All of $A$ lies in one component of $\Td\setminus\{z\}$.
Taking a first step into any of the other $d-1$ components and never
returning to $z$ gives
$1-h_A(z)\ge(d-1)\theta_d^{-1}/d=(d-2)/d$.  This proves the second bound in
\eqref{eq:connected-tree-capacity-gap}.

Root $T$ at $r$.  For a prescribed root image with a tree neighbourhood,
the number of embeddings of $T$ is
$(d)_{\deg_T(r)}\prod_{u\ne r}(d-1)_{\deg_T(u)-1}$.
Summing over root images and dividing by $|\operatorname{Aut}(T)|$
gives $\kappa_{d,T}N$ up to an error of $N^{o(1)}$: only $N^{o(1)}$
vertices lack the required fixed-radius tree neighbourhood, and each
vertex belongs to $O_{d,k}(1)$ copies of $T$.

For the sum over shapes, let $B(z)$ be the generating function for
finite connected sets containing the root in the infinite rooted tree
with $d-1$ children at each vertex, counted by their number of vertices.
Then $B=z(1+B)^{d-1}$ and
$kK_{d,k}=[z^k]z(1+B)^d$.  Lagrange inversion gives, for $k\ge2$,
\[
 kK_{d,k}=\frac{d}{k-1}\binom{(d-1)k}{k-2}
 =\frac{d}{(d-2)k+2}\binom{(d-1)k}{k-1};
\]
the case $k=1$ is immediate.
\end{proof}

\paragraph{Proofs of the results.}

We first work under stationary start.  All estimates are uniform for
$s$ in compact sets, so Lemma~\ref{lem:late-set-start-transfer} applies
after the time shift $s-b_N/(\theta_dN)$.  For the joint limits, we
expand mixed factorial moments over marked vertices or prescribed tree
copies.  The capacity increase in
Lemma~\ref{lem:connected-tree-capacity} bounds the contribution of
overlapping or nearby copies.

\begin{proof}[Proof of Theorem~\ref{thm:late-set-equidistribution}]
Write $\mathcal C=V_N\setminus\mathcal B_{N,2}$ and
$I_v=\one_{\{v\in\mathcal U_N^{(\gamma)}(s)\}}$.
Theorem~\ref{thm:uniform-finite-target-fvtl} and
Proposition~\ref{prop:local-green-comparison}, together with
$\Cap_{\Td}(\{o\})=\theta_d^{-1}$, give uniformly for $v,w\in\mathcal C$
\begin{equation*}
\begin{aligned}
 p_v:=\bbE_{\pi_G}^G I_v
       &=(1+o(1))e^{-s}N^{-\gamma},\\
 p_{v,w}:=\bbE_{\pi_G}^G[I_vI_w]
       &=(1+o(1))e^{-2s}N^{-2\gamma}
          &&\text{if }\dist_G(v,w)>2L_N,\\
 p_{v,w}&\le N^{-2\gamma(d-1)/d+o(1)}
          &&\text{if }0<\dist_G(v,w)\le2L_N.
\end{aligned}
\end{equation*}
The last bound uses Lemma~\ref{lem:tree-cluster-gap}.
There are at most $N^{1+o(1)}$ ordered pairs of distinct vertices at
distance at most $2L_N$.  After multiplication by $N^{2\gamma-2}$,
the diagonal terms, the terms with distance greater than $2L_N$, and the
remaining terms in the variance of $\sum_{v\in\mathcal C}f(v)I_v$
are bounded in absolute value, uniformly in $|f|\le1$, by
\[
 O(N^{\gamma-1}),\qquad o(1),\qquad
 N^{-1+2\gamma/d+o(1)}+N^{-1+o(1)}=o(1),
\]
respectively.  For the third bound use
$|p_{v,w}-p_vp_w|\le p_{v,w}+p_vp_w$ and $2-2(d-1)/d=2/d$.
The singleton estimate also gives, uniformly in $f$,
\[
 N^{\gamma-1}\sum_{v\in\mathcal C}f(v)p_v
       -\frac{e^{-s}}N\sum_{v\in\mathcal C}f(v)=o(1).
\]
The omitted vertices and their centering contribute at most
$(N^{\gamma-1}+e^{-s}/N)|\mathcal B_{N,2}|=o(1)$ deterministically.
This proves the stationary $L^2$ assertion in
\eqref{eq:late-weighted-limits}.  The choices $f\equiv1$ and
$f=\one_{A_N}$ then give \eqref{eq:late-size-equidistribution}.

For the point measure at $\gamma=1$, first suppose that
$N^{-1}\sum_v\delta_{\xi_N(v)}\Rightarrow\nu$ along the fixed graph
sequence.  Take pairwise disjoint Borel sets $B_1,\ldots,B_m$ with
$\nu(\partial B_j)=0$, and integers $r_j\ge0$ with $r=\sum_jr_j\ge1$.
Expand $\bbE_{\pi_G}^G\prod_j(\Xi_N(s)(B_j))_{r_j}$ over distinct
ordered vertex tuples with the prescribed marks.  As in the proof of
Proposition~\ref{prop:poisson-rrg}, tuples whose vertices avoid
$\mathcal B_{N,r}$ and have pairwise distances greater than $2L_N$
have avoidance probability $(1+o(1))N^{-r}e^{-rs}$.  The total
contribution of all other tuples is $o(1)$ even without the mark
restrictions.  The number of tuples in the first class satisfying the
mark restrictions, divided by $N^r$, tends to
$\prod_j\nu(B_j)^{r_j}$.  Consequently
\[
 \bbE_{\pi_G}^G\prod_j(\Xi_N(s)(B_j))_{r_j}
       \longrightarrow\prod_j(e^{-s}\nu(B_j))^{r_j}.
\]
These mixed factorial moments give independent Poisson counts.
Proposition~\ref{prop:poisson-rrg} gives tightness of the total mass;
compactness of $E$ then gives tightness in $\mathcal M_p(E)$.  The counts
on continuity sets identify the limit in \eqref{eq:critical-late-PPP}.
Restricting coordinates directly to $A_{N,j}$ proves the assertion about
disjoint vertex sets along any graph sequence with
$|A_{N,j}|/N\to a_j$.  Lemma~\ref{lem:late-set-start-transfer} gives all probability and
distributional limits uniformly over starts, including uniformity over $f$ in
\eqref{eq:late-weighted-limits}.  The centering changes by $o(1)$,
uniformly over $f$, when $s$ is replaced by $s-b_N/(\theta_dN)$.
For graph-dependent marks and subsets, every subsequence has a further
subsequence along which $G_N\in\mathcal E_N$ eventually and the assumed
empirical limits hold almost surely.  Applying the preceding
deterministic-sequence argument to each such realization proves the
asserted convergence in probability over $G_N$.
\end{proof}

\begin{proof}[Proof of Theorem~\ref{thm:late-component-hierarchy}]
Write $Z_{T,N}=Z_{T,N}^{(\gamma)}(s)$,
$C_{T,N}=C_{T,N}^{(\gamma)}(s)$ and $m_{T,N}=m_{T,N}(\gamma,s)$, and put
$\mathcal S_N^0(T)=\{A\in\mathcal S_N(T):
A\cap\mathcal B_{N,2k}=\varnothing\}$.
This class contains $(\kappa_{d,T}+o(1))N$ copies, while
$|\mathcal S_N(T)\setminus\mathcal S_N^0(T)|=N^{o(1)}$.
Proposition~\ref{prop:local-green-comparison},
Lemma~\ref{lem:connected-tree-capacity}, and
Theorem~\ref{thm:uniform-finite-target-fvtl} give, uniformly over
$A\in\mathcal S_N^0(T)$,
\begin{equation}
 \bbP_{\pi_G}^G(A\subseteq\mathcal U_N^{(\gamma)}(s))
 =(1+o(1))N^{-\gamma q_{d,k}}e^{-q_{d,k}s}.
 \label{eq:one-late-tree-copy}
\end{equation}
If $\gamma q_{d,k}<1$, copies outside this class contribute at most
$N^{o(1)}=o(m_{T,N})$ deterministically.  There are
$(1+o(1))(\kappa_{d,T}N)^2$ ordered pairs in $\mathcal S_N^0(T)^2$
whose mutual distance exceeds $2L_N$.  Their total contribution to the
second factorial moment is $(1+o(1))m_{T,N}^2$.
The remaining distinct pairs in $\mathcal S_N^0(T)^2$ number at most
$N^{1+o(1)}$: choose the first copy in $O(N)$ ways, a vertex of the second
within distance $2L_N$ of it in $N^{o(1)}$ ways, and a copy through that
vertex in $O_{d,k}(1)$ ways.  Each union is a single $2L_N$-cluster
disjoint from $\mathcal B_{N,2k}$ and contains a connected $k$-set and
an additional vertex.  Its capacity is at least
$(q_{d,k}+\varepsilon_d)\theta_d^{-1}+o(1/\log N)$, so these pairs contribute at most
\begin{equation}
 N^{1-\gamma(q_{d,k}+\varepsilon_d)+o(1)}=o(m_{T,N}^2),
 \label{eq:late-tree-nonfar-second-moment}
\end{equation}
since $-1+\gamma(q_{d,k}-\varepsilon_d)<0$.
The diagonal contributes $O(m_{T,N})=o(m_{T,N}^2)$.
Together with the mean estimate this proves the $L^2$ assertion for $Z$.

Let $W_{T,N}$ count pairs $(A,z)$ with $A\in\mathcal S_N(T)$,
$z\notin A$ adjacent to $A$, and $A\cup\{z\}$ unvisited.  If the union
avoids $\mathcal B_{N,2k+2}$, it induces a $(k+1)$-vertex tree.
There are $O_{d,k}(N)$ such pairs.  The other pairs number at most
$N^{o(1)}$, and their total expected contribution is $o(1)$ by
Proposition~\ref{prop:rough-capacity-bound}.  Therefore
\begin{equation}
 0\le Z_{T,N}-C_{T,N}\le W_{T,N},\qquad
 \bbE_{\pi_G}^G W_{T,N}
 \le N^{1-\gamma q_{d,k+1}+o(1)}+o(1).
 \label{eq:copy-component-comparison}
\end{equation}
Since $q_{d,k+1}-q_{d,k}=(d-2)/d>0$, this is $o(m_{T,N})$ below
$\gamma_{d,k}$ and $o(1)$ at $\gamma_{d,k}$.  Markov's inequality gives
the asserted convergence in probability for $C$ below the threshold.

At $\gamma=\gamma_{d,k}$, fix nonnegative integers $r_T$ with
$r=\sum_T r_T\ge1$ and expand the mixed factorial moment with $r_T$
copies of each shape.  Tuples avoiding
$\mathcal B_{N,2kr}$ and with mutually $2L_N$-separated copies number
$(1+o(1))N^r\prod_T\kappa_{d,T}^{r_T}$.  Their capacity is
$rq_{d,k}\theta_d^{-1}+o(1/\log N)$, giving limit $\prod_T\lambda_T(s)^{r_T}$.
For all other tuples, decompose the vertex union into $2L_N$-clusters.
Every selected copy is connected and therefore lies in a single cluster.
Among clusters disjoint from $\mathcal B_{N,2kr}$, let $a_1$ count those
containing exactly one selected copy and $a_2$ those containing at least
two.  Let $b$ count clusters meeting $\mathcal B_{N,2kr}$.

For fixed $a_1,a_2,b$, the number of tuples is at most
$N^{a_1+a_2+o(1)}$.  Indeed, choose one vertex in each cluster, using
$O(N)$ choices for a cluster disjoint from $\mathcal B_{N,2kr}$ and
$N^{o(1)}$ choices from $\mathcal B_{N,2kr}$ for a cluster meeting it.
All remaining vertices of that cluster lie in an $O_{k,r}(L_N)$-ball,
which has $N^{o(1)}$ vertices.  The finitely many assignments of the
$kr$ coordinate positions to clusters affect only a constant factor.
Distinct selected copies have distinct
vertex sets, including across different shapes.  Hence each cluster
counted by $a_2$ has capacity at least
$(q_{d,k}+\varepsilon_d)\theta_d^{-1}+o(1/\log N)$; those counted by $a_1$ have
capacity $q_{d,k}\theta_d^{-1}+o(1/\log N)$, and those counted by $b$ have capacity
at least $\delta_{d,kr}$.  Block additivity and the finite-target theorem
bound the contribution of these tuples by
\begin{equation}
 N^{-a_2\varepsilon_d/q_{d,k}
       -b\theta_d\delta_{d,kr}/q_{d,k}+o(1)}=o(1),
 \label{eq:late-tree-bad-mixed-factorial}
\end{equation}
because $a_2+b\ge1$.  Only finitely many triples $(a_1,a_2,b)$ occur at
each moment order.  The mixed factorial moments give the independent
Poisson limits for $Z$.  Equation~\eqref{eq:copy-component-comparison}
shows that the vectors $Z$ and $C$ coincide with probability $1-o(1)$.

For $\gamma>\gamma_{d,k}$, copies in $\mathcal S_N^0(T)$ have expected
count $N^{1-\gamma q_{d,k}+o(1)}=o(1)$.  All other copies contribute at
most $N^{o(1)}N^{-\gamma\theta_d\delta_{d,k}+o(1)}=o(1)$.
Markov's inequality proves their absence with probability $1-o(1)$.
Lemma~\ref{lem:late-set-start-transfer} gives all probability and
distributional conclusions uniformly over starting vertices.
\end{proof}

\begin{proof}[Proof of Corollary~\ref{cor:late-independence-transition}]
For the two-vertex tree $T$, $\kappa_{d,T}=d/2$ and
$D_N^{(\gamma)}(s)=Z_{T,N}^{(\gamma)}(s)$, so the count assertions follow
from Theorem~\ref{thm:late-component-hierarchy}.
Under stationary start the rough singleton bound excludes all unvisited
vertices of $\mathcal B_{N,3}$ with probability $1-o(1)$.
There are at most $N^{1+o(1)}$ pairs at distance at most $2L_N$.
For pairs disjoint from $\mathcal B_{N,3}$, local comparison gives
capacity at least $\frac{2(d-1)}{d}\theta_d^{-1}+o(1/\log N)$.  Thus the expected number of such
unvisited pairs is at most $N^{1-2\gamma(d-1)/d+o(1)}=o(1)$ when
$\gamma>\gamma_{d,2}$.  At $\gamma=\gamma_{d,2}$, pairs at distance
between $2$ and $2L_N$ and disjoint from $\mathcal B_{N,3}$ have capacity
at least
\[
 \widehat q_d\theta_d^{-1}+o(1/\log N),\qquad
 \widehat q_d:=\frac{2}{1+(d-1)^{-2}}>\frac{2(d-1)}{d},
\]
by Lemma~\ref{lem:tree-basic-capacities}.  Their expected unvisited count
is at most $N^{1-d\widehat q_d/(2(d-1))+o(1)}=o(1)$.
Consider the graph on $\mathcal U_N^{(\gamma_{d,2})}(s)$ that joins
distinct vertices at distance at most $2L_N$.  Consequently every edge
of this graph is an edge of $G_N$.  A connected component of this graph
with at least three vertices would contain a length-two graph path.
The endpoints would be a nonadjacent unvisited pair at distance two,
because the path lies in a tree neighbourhood.  This event has just been
excluded.  Thus each $2L_N$-cluster has one vertex or consists of the
endpoints of an original edge.  Lemma~\ref{lem:late-set-start-transfer}
gives uniformity over starts.  Finally, the independence event is
$\{D_N^{(\gamma)}(s)=0\}$, whose probability follows from the count limits.
\end{proof}

\begin{proof}[Proof of Corollary~\ref{cor:late-largest-component}]
The rough singleton bound and Lemma~\ref{lem:late-set-start-transfer}
exclude unvisited vertices in $\mathcal B_{N,k+1}$ with probability
$1-o(1)$, uniformly over starts.  On this event every component of size
at most $k+1$ is a tree, and the first $k+1$ vertices of a breadth-first
exploration of a larger component induce a tree as well.  The absence of
all finitely many shapes of size $k+1$ therefore excludes every larger
component when $\gamma>\gamma_{d,k+1}$, including cyclic components.
For $\gamma<\gamma_{d,k}$, Theorem~\ref{thm:late-component-hierarchy}
supplies a $k$-vertex component with probability $1-o(1)$.
At $\gamma=\gamma_{d,k}$, summing the independent limits over shapes
gives the stated Poisson law for the number of $k$-vertex components.
For $k\ge2$, the same theorem supplies a $(k-1)$-vertex component with
probability $1-o(1)$, because $\gamma_{d,k}<\gamma_{d,k-1}$.
The Poisson probability of zero then gives
\eqref{eq:critical-largest-component-law}.  For $k=1$, it gives the
stated probabilities of maximum component size zero and one.
\end{proof}

\end{document}